\newcommand{\rnnn}{\mathbb R^n}
\newcommand{\sn}{ {\mathbb{S}^{n-1}}}
\newcommand{\R}{\mathbb R}
\newcommand{\psum}{{+_{\negthinspace\kern-2pt p}}\,}
\newcommand{\qsum}[1]{{+_{\negthinspace\kern-2pt #1}}\,}
\newcommand{\dpsum}{{\tilde+_{\negthinspace\kern-1pt p}}\,}
\newcommand{\dqsum}[1]{{\tilde+_{\negthinspace\kern-1pt #1}}\,}
\newcommand{\lsub}[1]{\hskip -1.5pt\lower.5ex\hbox{$_{#1}$}}
\numberwithin{equation}{section}
\newtheorem{theo}{Theorem}[section]
\newtheorem{coro}[theo]{Corollary}
\newtheorem{lem}[theo]{Lemma}
 \theoremstyle{definition}
\begin{document}

\title{On the continuity of the solutions to the $L_{p}$ torsional Minkowski problem}

\author[J. Hu]{Jinrong Hu}
\address{School of Mathematics, Hunan University, Changsha, 410082, Hunan Province, China}
\email{hujinrong@hnu.edu.cn}
\author[Q. Mao]{Qiongfang Mao}
\address{School of Mathematics, Hunan University, Changsha, 410082, Hunan Province, China}
\email{maoqiongfang@hnu.edu.cn}
\author[S. Wang]{Sinan Wang}
\address{School of Mathematics, Hunan University, Changsha, 410082, Hunan Province, China}
\email{wangsinan@hnu.edu.cn}
\begin{abstract}
In this paper, we derive the continuity of solutions to the $L_{p}$ torsional Minkowski problem for $p>1$. It is shown that the weak convergence of the $L_{p}$ torsional measure implies the convergence of the sequence of the corresponding convex bodies in the Hausdorff metric. Furthermore, continuity of the solution to the $L_{p}$ torsional Minkowski problem with regard to $p$ is also obtained.
\end{abstract}
\keywords{Torsional rigidity, $L_p$-Minkowski problem, continuity.}
\subjclass[2010]{52A20, 52A40.}

\maketitle

\baselineskip18pt

\parskip3pt

\section{Introduction}

The classic Minkowski problem characterized by the surface area measure is the fundamental problem in the Brunn-Minkowski theory of convex bodies. It was introduced and attacked by Minkowski himself in \cite{M897, M903} and was widely extended in a series of papers \cite{A39,A42,FJ38,B87}.  As an analogue of the classic Minkowski problem within the $L_p$ Brunn-Minkowski theory, the $L_p$ Minkowski problem, was first posed by Lutwak ~\cite{L93}. After that, the $L_{p}$ Minkowski problem have been the breeding ground for many valuable results (see, e.g.,~\cite{B17,CL17,B19,F62,L04,Zhu14,Zhu15,Zh15}). Moreover, with the development of the $L_{p}$ Minkowski problem, the $L_p$ Minkowski problems for other Borel measures were emerged (for example, $p$-capacity, $p$-torsional rigidity). For more details, see \cite{Hu21,J96,J962,C15,Xiong19,Z20} and their references.

In this paper, our aim is to investigate the continuity of the $L_{p}$ Minkowski problem for torsional rigidity in the case of $p>1$. We first recall that the torsional rigidity $T(\Omega)$ of a convex body $\Omega$ in the $n$-dimensional Euclidean space ${\rnnn}$, is defined as
\begin{equation*}\label{tordef}
\frac{1}{T(\Omega)}=\inf\left\{\frac{\int_{\Omega}|\nabla u|^{2}{d}x}{(\int_{\Omega}|u|{d}x)^{2}}: \ u\in W^{1,2}_{0}(\Omega)\, \int_{\Omega}|u|{d}x> 0\right\},
\end{equation*}
where $W^{1,2}_{0}(\Omega)$ denotes the Sobolev space of functions having compact support in $W^{1,2}(\Omega)$, while $W^{1,2}(\Omega)$ represents the Sobolev space of functions having weak derivatives up to first order in $L^{2}(\Omega)$.

It is clear that the explicit expression of torsional functional can be exposed with the assistance of  the solution of an elliptic boundary-value problem (see, e.g., ~\cite{CA05}). More precisely, let $u$ be the unique solution of
\begin{equation}\label{torlapu}
\left\{
\begin{array}{lr}
\Delta u= -2, & x\in \Omega, \\
u=0,  & x\in  \partial \Omega.
\end{array}\right.
\end{equation}
Then we get
\begin{equation*}\label{tordef2}
T(\Omega)=\int_{\Omega}|\nabla u|^{2}{d}x.
\end{equation*}

In the circumstance that the boundary $\partial\Omega$ is of class $C^{2}$, using the standard regularity results of elliptic equations (see, e.g., Gilbarg-Trudinger \cite{GT01}), one see that $\nabla u$ can be suitably defined $\mathcal{H}^{n-1}$ a.e. on $\partial\Omega$, and $T(\Omega)$ can be given as (see, e.g., Proposition 18 of ~\cite{CA05}),
\begin{align}\label{tordef3}
T(\Omega)&=\frac{1}{n+2}\int_{\partial\Omega}h(\Omega,g_{\Omega}(x))|\nabla u(x)|^{2}{d}\mathcal{H}^{n-1}(x)\notag\\
&=\frac{1}{n+2}\int_{\sn}h(\Omega,\theta)|\nabla u(g^{-1}_{\Omega}(\theta)|^{2}{d}S(\Omega,\theta).
\end{align}
From \eqref{tordef3}, we know that the torsion measure $\mu^{tor}(\Omega,\eta)$ is defined on the unit sphere ${\sn}$ by
\begin{equation}\label{tormes2}
\mu^{tor}(\Omega,\eta)=\int_{g^{-1}_{\Omega}(\eta)}|\nabla u(x)|^{2}{d}\mathcal{H}^{n-1}(x)=\int_{\eta}|\nabla u(g^{-1}_{\Omega}(\theta)|^{2}{d}S(\Omega,\theta)
\end{equation}
for every Borel subset $\eta$ of ${\sn}$. Here $h(\Omega,\cdot)$ is the support function of $\Omega$, $\mathcal{H}^{n-1}(\partial \Omega)$, $g_{\Omega}$ and $S(\Omega,\cdot)$ are $(n-1)$-dimensional Hausdorff measure of $\partial\Omega$, the Gauss map and the surface area measure of $\Omega$.

It should be remarked that, from proposition 2.5 of ~\cite{CF10}, one see  that $\nabla u$ has finite non-tangential limits $\mathcal{H}^{n-1}$ a.e. on $\partial \Omega$ via extending the estimates of harmonic functions established by Dahlberg \cite{D77} and $|\nabla u|\in L^{2}(\partial \Omega, \mathcal{H}^{n-1})$ without the assumption of smoothness, which implied that $~\eqref{tormes2}$ is well-defined a.e. on the unit sphere ${\sn}$, not limited to the case of smoothness, and can be deemed as a Borel measure. Furthermore, $~\eqref{tordef3}$ also holds for any convex body in ${\rnnn}$ showed by theorem 3.1 of ~\cite{CF10} and  the Hadamard variational formula of $T(\Omega)$ with the aid of the weak convergence of the torsion measure is given by,
\begin{equation}\label{torhadma}
\frac{d}{dt}T(\Omega+t\Omega_{1})\Big|_{t=0}=\int_{\sn}h(\Omega_{1},\theta){d}\mu^{tor}(\Omega,\theta)
\end{equation}
for any two convex bodies $\Omega, \Omega_{1}$ in ${\rnnn}$. Similar to volume (see, e.g., ~\cite{S14}), Colesanti-Fimiani~\cite{CF10} gave the so-called first mixed torsional rigidity $T_{1}(\Omega, \Omega_{1})$ of $\Omega$ and $\Omega_{1}$ by multiplying $\frac{1}{n+2}$ in the right of the integral in $~\eqref{torhadma}$,

\begin{equation*}\label{mixtorhadma}
T(\Omega, \Omega_{1})=\frac{1}{n+2}\int_{\sn}h(\Omega_{1},\theta){d}\mu^{tor}(\Omega,\theta),
\end{equation*}
which is an extension of $~\eqref{tordef3}$.

Colesanti-Fimiani\cite{CF10} first posed the Minkowski problem for torsional rigidity, which is described as: given $\mu$ is a finite Borel measure on ${\sn}$, what are necessary and sufficient conditions on $\mu$ such that $\mu$ is the torsion measure $\mu^{tor}(\Omega,\cdot)$ of a convex body $\Omega$ in ${\rnnn}$? Colesanti-Fimiani \cite{CF10} proved the existence and uniqueness up to translations of the solution by utilizing the variational technique, which was firstly proposed by Aleksandrov ~\cite{A39, A42} and was later employed by Jerison ~\cite{J96}, Colesanti-Nystr\"{o}m-Salani-Xiao-Yang-Zhang ~\cite{C15}.

Analogue to the $L_{p}$ surface measure (see, e.g., ~\cite{S14}),  Chen-Dai \cite{Chen20} defined the $L_{p}$ torsion measure $\mu^{tor}_{p}(\Omega,\cdot)$ by
\begin{equation}\label{pmea1}
\mu^{tor}_{p}(\Omega,\eta)=\int_{\eta}h(\Omega,\theta)^{1-p}{d}\mu^{tor}(\Omega,\theta)
\end{equation}
for every Borel subset $\eta$ of ${\sn}$, and employed similar methods of Lutwak \cite{L93} and Colesanti-Fimiani \cite{CF10} to get the following $L_{p}$ variational formula of torsional rigidity with respect to $L_{p}$ sum (see, e.g., ~\cite{L04}),
\begin{equation*}\label{ptorhadma}
\frac{d}{dt}T(\Omega+_{p}t\Omega_{1})\Big|_{t=0}=\frac{1}{p}\int_{\sn}h(\Omega_{1},\theta)^{p}{d}\mu^{tor}_{p}(\Omega,\theta)
\end{equation*}
for any two convex bodies $\Omega, \Omega_{1}$ in ${\rnnn}$, where the $L_{p}$ $scalar$ $multiplication$ $t\cdot_{p}\Omega_{1}$ is the set of $t^{1/p}\Omega_{1}$ for $t>0$. Here, we omit the subscript $p$ under the dot.

Meanwhile, Chen-Dai \cite{Chen20} introduced the $L_{p}$ Minkowski problem for torsional rigidity: For $p\in {\R}$, given a finite Borel measure $\mu$ on ${\sn}$, what are the necessary and sufficient conditions on $\mu$ such that $\mu$ is the $L_{p}$ torsion measure $\mu^{tor}_{p}(\Omega,\cdot)$ of a convex body $\Omega$ in ${\rnnn}$? In \cite{Chen20}, the authors proved the existence and uniqueness of the solution when $p>1$. The existence of the solution in the case of $0<p<1$ was treated by Hu-Liu\cite{Hu21}, but the uniqueness of that is not known due to the lack of the $L_{p}$ Brunn-Minkowski inequality for torsional rigidity in the range of $p<1$.

 Notice that there are only few results of the Minkowski type problems on $\mathfrak{p}$-torsion measure with regard to $\mathfrak{p}$-Laplacian equation ($\mathfrak{p}\neq2$), the main reason forming this phenomenon may be that there is no weak convergence result of $\mathfrak{p}$- torsion measure. Huang-Song-Xu \cite{H181} established the Hadamard variational formula of $\mathfrak{p}$-torsional rigidity in the smooth category, further forming a frame for dealing with the associated Minkowski type problems on $\mathfrak{p}$-torsion measure.

We are devoted to deriving the continuity of the solutions to the $L_{p}$ Minkowski problem for torsional rigidity for $p>1$. The main results are shown in the following.
\begin{theo}\label{main}
Let $\mu_{i}$ and $\mu$ be finite Borel measures on $\sn$ which are not concentrated on any closed hemisphere. Suppose that $\Omega_{i}$ and $\Omega$ are convex bodies in $\rnnn$ containing the origin such that $d\mu^{tor}(\Omega_{i},\cdot)=h(\Omega_{i},\cdot)^{p-1}d\mu_{i}$ and $d\mu^{tor}(\Omega,\cdot)=h(\Omega,\cdot)^{p-1}d\mu$ respectively for $1<p<\infty$ and $p\neq n+2$. If $\mu_{i}\rightarrow \mu$ weakly, then $\Omega_{i}\rightarrow \Omega $ as $i\rightarrow \infty$.
\end{theo}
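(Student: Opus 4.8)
The plan is to follow the standard compactness-plus-uniqueness scheme for Minkowski-type continuity results. First I would show that the sequence $\Omega_i$ is uniformly bounded. Since $\mu_i \to \mu$ weakly and $\mu$ is not concentrated on any closed hemisphere, the total masses $\mu_i(\sn)$ are bounded and, for $i$ large, each $\mu_i$ is also not concentrated on any closed hemisphere (indeed one gets a uniform lower bound of the form $\int_{\sn}\langle\theta,v\rangle_+\,d\mu_i(\theta)\ge c>0$ for all unit $v$). From $d\mu^{tor}(\Omega_i,\cdot)=h(\Omega_i,\cdot)^{p-1}\,d\mu_i$ together with the torsional analogue of the identities \eqref{tordef3}--\eqref{tormes2} one expresses $T(\Omega_i)$ and $T_1(\Omega_i,B)$ in terms of $\mu_i$; combining the monotonicity/scaling of torsional rigidity ($T$ is homogeneous of degree $n+2$, and $T(\Omega_i)\le T(R_iB)=c_nR_i^{n+2}$ if $\Omega_i\subseteq R_iB$) with a lower bound for $h(\Omega_i,\cdot)$ coming from the width in the direction realizing the diameter, one derives that $R_i=\mathrm{diam}(\Omega_i)$ stays bounded. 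The case distinction $p\neq n+2$ is what makes the homogeneity degrees $p$ and $n+2$ genuinely different, so that such an a priori estimate does not degenerate.

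Next, having $\Omega_i$ uniformly bounded, Blaschke's selection theorem gives a subsequence $\Omega_{i_k}\to K$ in the Hausdorff metric for some compact convex $K$ containing the origin. I would then argue $K$ is a genuine convex body (nonempty interior): if $K$ were lower-dimensional, then $h(\Omega_{i_k},\cdot)\to h(K,\cdot)$ would vanish on a great subsphere, forcing (via $\mu_{i_k}\to\mu$ and the relation $d\mu^{tor}(\Omega_{i_k},\cdot)=h(\Omega_{i_k},\cdot)^{p-1}\,d\mu_{i_k}$ with $p-1>0$) the limit torsional data to collapse, contradicting that $\mu$ is not concentrated on a hemisphere and has positive total mass. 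The heart of the argument is then the \emph{weak continuity of the torsional measure}: by the results of Colesanti--Fimiani quoted in the excerpt (\eqref{torhadma} and the surrounding discussion, valid for all convex bodies in $\rnnn$), $\Omega_{i_k}\to K$ implies $\mu^{tor}(\Omega_{i_k},\cdot)\to\mu^{tor}(K,\cdot)$ weakly. Since also $h(\Omega_{i_k},\cdot)^{p-1}\to h(K,\cdot)^{p-1}$ uniformly on $\sn$ (using that $h(K,\cdot)>0$ because $K$ is a body containing the origin in its interior — here the uniform positivity is needed to pass $p-1$ powers through the limit when $1<p<2$), we may pass to the limit in $d\mu^{tor}(\Omega_{i_k},\cdot)=h(\Omega_{i_k},\cdot)^{p-1}\,d\mu_{i_k}$ to get $d\mu^{tor}(K,\cdot)=h(K,\cdot)^{p-1}\,d\mu$, i.e. $K$ solves the same $L_p$ torsional Minkowski problem as $\Omega$.

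Finally I invoke the uniqueness part of the Chen--Dai solution of the $L_p$ torsional Minkowski problem for $p>1$, $p\neq n+2$ (based on the $L_p$ Brunn--Minkowski inequality for torsional rigidity), to conclude $K=\Omega$. Since every Hausdorff-convergent subsequence of $(\Omega_i)$ has the same limit $\Omega$ and the sequence lies in a compact set, the whole sequence converges: $\Omega_i\to\Omega$.

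I expect the main obstacle to be the a priori boundedness in the first step: one must rule out $\mathrm{diam}(\Omega_i)\to\infty$ using only the weakly convergent data $\mu_i$, and this requires carefully coupling the upper bound for $T(\Omega_i)$ (from $\Omega_i$ being contained in a large ball) with a lower bound for the right-hand side $\int h(\Omega_i,\cdot)^p\,d\mu_i$ (from $\Omega_i$ containing a long segment and from the uniform non-degeneracy of the $\mu_i$), the two homogeneities $n+2$ and $p$ being reconciled precisely because $p\neq n+2$. A secondary technical point is the uniform positivity of $h(K,\cdot)$ (equivalently $0\in\mathrm{int}\,K$) needed to handle the exponent $p-1$ when it is not an integer; this should follow from the non-degeneracy of the limit measure but must be made explicit.
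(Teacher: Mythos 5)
Your scheme (compactness plus uniqueness) is the right skeleton, but there is a genuine gap in the boundedness step, and it is precisely the place where your approach diverges from the paper's. You attempt to bound $R_i = \max_{\sn} h(\Omega_i,\cdot)$ directly from the unnormalized equation $d\mu^{tor}(\Omega_i,\cdot)=h(\Omega_i,\cdot)^{p-1}d\mu_i$, by combining $T(\Omega_i)\le c_n R_i^{\,n+2}$ (from $\Omega_i\subseteq R_iB$) with the lower bound $\int h(\Omega_i,\cdot)^p\,d\mu_i \ge c\,R_i^{\,p}$ (from $\Omega_i\ni R_iu_i$ and the uniform nondegeneracy of the $\mu_i$) and the identity $T(\Omega_i)=\frac{1}{n+2}\int h(\Omega_i,\cdot)^p\,d\mu_i$. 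The resulting inequality is $R_i^{\,p-n-2}\le C$. When $p>n+2$ this is indeed an upper bound on $R_i$. But when $1<p<n+2$ it reads $R_i^{\,n+2-p}\ge 1/C$, i.e. a \emph{lower} bound on $R_i$, which does not rule out $R_i\to\infty$. So your a~priori estimate, as stated, covers only half the parameter range; the phrase ``the two homogeneities being reconciled precisely because $p\neq n+2$'' conceals the fact that the sign of $p-(n+2)$ determines which direction the estimate goes. This is a real gap, not a technicality.

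The paper avoids this difficulty by first passing to the \emph{normalized} problem: it rescales $\tilde\Omega_i=T(\Omega_i)^{-1/p}\Omega_i$, which by Lemma~\ref{r34}(2) solves Problem~3, i.e. $F_p(\tilde\Omega_i)=\frac{1}{n+2}\int h(\tilde\Omega_i,\cdot)^p\,d\mu_i=1$. This normalization fixes the scale, so that $n+2\ge \tilde R_i^{\,p}\int(u_i\cdot v)_+^p\,d\mu_i$ gives an upper bound on $\tilde R_i$ unconditionally on the sign of $p-(n+2)$. Combined with the lower bound $T(\tilde\Omega_i)\ge T(\lambda_iB)>c_0>0$ coming from the extremal characterization (Lemma~\ref{hj}), and the inequality $\mathrm{Vol}(\tilde\Omega_i)\ge T(\tilde\Omega_i)/\mathrm{diam}(\tilde\Omega_i)^2$ from Lemma~\ref{argu}, one gets nondegeneracy of the limit cleanly. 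The original $\Omega_i$ are then recovered by $\Omega_i=T(\tilde\Omega_i)^{1/(p-2-n)}\tilde\Omega_i$, and continuity of $T$ finishes the argument. If you want to salvage your direct approach, you would essentially have to re-derive a bound on $T(\Omega_i)$ first (equivalent to bounding the normalization factor), which is where $p\neq n+2$ enters in the exponent $1/(p-2-n)$.

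Two smaller remarks. First, your concern that one needs $0\in\mathrm{int}\,K$ in order to pass the power $p-1$ through the limit when $1<p<2$ is unfounded: since $p-1>0$, the map $t\mapsto t^{p-1}$ is uniformly continuous on bounded intervals of $[0,\infty)$, so uniform convergence $h(\Omega_{i_k},\cdot)\to h(K,\cdot)$ already gives uniform convergence of $h(\Omega_{i_k},\cdot)^{p-1}$; no interior-point hypothesis is required. (It would be required for $p<1$, but that is outside the theorem.) Second, your argument that the limit $K$ has nonempty interior (``the limit torsional data would collapse, contradicting\dots'') is only a sketch; the paper's route, via the lower bound on $T(\tilde\Omega_i)$ and the volume estimate of Lemma~\ref{argu}, is the concrete replacement you would need.
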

\begin{theo}\label{main2}
Let $\mu$ be finite Borel measures on $\sn$ which are not concentrated on any closed hemisphere. Suppose that $\Omega_{i}$ and $\Omega$ are convex bodies in $\rnnn$ containing the origin such that $d\mu^{tor}(\Omega_{i},\cdot)=h(\Omega_{i},\cdot)^{p_{i}-1}d\mu$ and $d\mu^{tor}(\Omega,\cdot)=h(\Omega,\cdot)^{p-1}d\mu$ respectively for $1<p<\infty$ and $p\neq n+2$. If $p_{i}\rightarrow p$, then $\Omega_{i}\rightarrow \Omega $ as $i\rightarrow \infty$.
\end{theo}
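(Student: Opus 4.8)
The plan is a compactness argument of the standard type for stability questions in $L_p$ Minkowski theory. Since $d\mu^{tor}(\Omega_i,\cdot)=h(\Omega_i,\cdot)^{p_i-1}d\mu$ says exactly that $\Omega_i$ solves the $L_{p_i}$ torsional Minkowski problem with datum $\mu$, and $\Omega$ solves the $L_p$ one, it suffices to show that every Hausdorff-convergent subsequence of $\{\Omega_i\}$ has limit $\Omega$, and then invoke the subsequence principle. Before anything else I would record the scaling identity
\[
T(\Omega_i)=\frac{1}{n+2}\int_{\sn}h(\Omega_i,\theta)^{p_i}\,d\mu(\theta),\qquad T(\Omega)=\frac{1}{n+2}\int_{\sn}h(\Omega,\theta)^{p}\,d\mu(\theta),
\]
obtained by taking $\Omega_1=\Omega_i$ (resp. $\Omega$) in the $L_{p_i}$ (resp. $L_p$) variational formula and using $\Omega_i+_{p_i}t\cdot\Omega_i=(1+t)^{1/p_i}\Omega_i$ together with the $(n+2)$-homogeneity $T(\lambda\Omega)=\lambda^{n+2}T(\Omega)$.

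The first and, I expect, hardest step is uniform boundedness: $0<c_0\le R_i\le R<\infty$ for all large $i$, where $R_i:=\max_{x\in\Omega_i}|x|=\max_{\theta\in\sn}h(\Omega_i,\theta)$. For a lower bound on $T(\Omega_i)$, pick $P_i\in\Omega_i$ with $|P_i|=R_i$; since $0\in\Omega_i$ the segment $[0,P_i]\subseteq\Omega_i$, so $h(\Omega_i,\theta)\ge R_i(u_i\cdot\theta)_+$ with $u_i=P_i/R_i$, and as $\mu$ is not concentrated on any closed hemisphere a compactness argument on $\sn$ (with $p_i\to p$) yields $\int_{\sn}(u\cdot\theta)_+^{p_i}d\mu\ge c'>0$ uniformly in $u$; hence $T(\Omega_i)\ge\frac{c'}{n+2}R_i^{p_i}$. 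For upper bounds, $\Omega_i\subseteq R_iB^n$ and domain monotonicity of $T$ give $T(\Omega_i)\le R_i^{n+2}T(B^n)$, while the Minkowski inequality for torsional rigidity $T(\Omega_i,B^n)\ge T(\Omega_i)^{(n+1)/(n+2)}T(B^n)^{1/(n+2)}$ (a differential consequence of the Brunn--Minkowski inequality of \cite{CF10}), together with $(n+2)T(\Omega_i,B^n)=\mu^{tor}(\Omega_i,\sn)=\int_{\sn}h(\Omega_i,\theta)^{p_i-1}d\mu\le R_i^{p_i-1}\mu(\sn)$, gives $T(\Omega_i)\le C R_i^{(p_i-1)(n+2)/(n+1)}$. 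Comparing the lower bound with the first upper bound handles the case $p>n+2$, and with the second the case $p<n+2$; it is exactly here that $p\ne n+2$ is used, since one needs $p_i$ on a fixed side of $n+2$ for $i$ large, and the argument bifurcates accordingly.

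Given boundedness, the Blaschke selection theorem lets me pass from any subsequence to a sub-subsequence $\Omega_i\to\Omega_0$ in the Hausdorff metric, with $0\in\Omega_0\subseteq RB^n$ and $\max_\theta h(\Omega_0,\theta)=\lim_iR_i\ge c_0$, so $\Omega_0\ne\{0\}$. To see that $\Omega_0$ is a convex body, suppose it had empty interior; then $|\Omega_0|=0$, so by St.\ Venant's inequality $T(\Omega_i)\le\tau_n|\Omega_i|^{(n+2)/n}\to0$, whence $\int_{\sn}h(\Omega_i,\theta)^{p_i}d\mu=(n+2)T(\Omega_i)\to0$; since $h(\Omega_i,\cdot)\to h(\Omega_0,\cdot)$ uniformly and $p_i\to p$, dominated convergence forces $\int_{\sn}h(\Omega_0,\theta)^{p}d\mu=0$, i.e. $h(\Omega_0,\cdot)=0$ $\mu$-a.e. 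But any $q\in\Omega_0\setminus\{0\}$ gives $h(\Omega_0,\theta)\ge(q\cdot\theta)_+>0$ on the open hemisphere $\{q\cdot\theta>0\}$, which has positive $\mu$-mass, a contradiction.

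Finally I would identify the limit and use uniqueness. As $\Omega_i\to\Omega_0$ with $\Omega_0$ a convex body, the weak continuity of the torsional measure from \cite{CF10} gives $\mu^{tor}(\Omega_i,\cdot)\to\mu^{tor}(\Omega_0,\cdot)$ weakly; on the other hand $d\mu^{tor}(\Omega_i,\cdot)=h(\Omega_i,\cdot)^{p_i-1}d\mu$, and since $h(\Omega_i,\cdot)\to h(\Omega_0,\cdot)$ uniformly and $p_i-1\to p-1>0$, one has $h(\Omega_i,\theta)^{p_i-1}\to h(\Omega_0,\theta)^{p-1}$ pointwise with a uniform bound, so $d\mu^{tor}(\Omega_i,\cdot)\to h(\Omega_0,\cdot)^{p-1}d\mu$ weakly. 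By uniqueness of weak limits $d\mu^{tor}(\Omega_0,\cdot)=h(\Omega_0,\cdot)^{p-1}d\mu$, i.e. $\Omega_0$ solves the $L_p$ torsional Minkowski problem with datum $\mu$; as $\Omega$ does too and $1<p<\infty$, $p\ne n+2$, the uniqueness theorem of \cite{Chen20} gives $\Omega_0=\Omega$. Since every subsequence of $\{\Omega_i\}$ has a further subsequence converging to $\Omega$, we conclude $\Omega_i\to\Omega$ as $i\to\infty$.
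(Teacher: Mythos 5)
Your proposal is correct in outline and takes a genuinely different route from the paper. The paper first reduces to the \emph{normalized} problem $T(\Omega)^{-1}d\mu^{tor}(\Omega,\cdot)=h(\Omega,\cdot)^{p-1}d\mu$ (its Problem~3) via the scaling Lemma~\ref{r34}, proves stability for that normalized problem (Lemma~\ref{pi}), and then rescales back; inside Lemma~\ref{pi} the uniform bound is nearly immediate because the normalization $F_{p_i}(\Omega_i)=1$ is built in, and the lower bound on $T(\Omega_i)$ comes for free from the extremal characterization of Problem~3 (Lemma~\ref{hj}). You instead work directly with the unnormalized equation, exploiting the identity $(n+2)T(\Omega_i)=\int_{\sn}h(\Omega_i,\cdot)^{p_i}\,d\mu$ together with domain monotonicity and the torsional Minkowski inequality to pin down $R_i=\max h(\Omega_i,\cdot)$ from both sides, bifurcating on $p\lessgtr n+2$. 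This bypasses the Problem~3 machinery entirely and localizes the use of $p\neq n+2$ in the a priori estimate rather than in a rescaling step; it also replaces the paper's nondegeneracy device ($\mathrm{Vol}(\Omega_i)\ge T(\Omega_i)/\mathrm{diam}(\Omega_i)^2$, from Lemma~\ref{argu}) by St.~Venant's inequality, which is an equally serviceable choice.

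There is, however, one concrete slip you should repair. You assert $0<c_0\le R_i\le R$, and the lower bound is genuinely needed in your nondegeneracy step: if $\Omega_0=\{0\}$ then $h(\Omega_0,\cdot)\equiv 0$ and your St.~Venant argument produces no contradiction. But the pairing you describe --- ``lower bound vs.\ first upper bound for $p>n+2$, vs.\ second for $p<n+2$'' --- yields only the \emph{upper} bound on $R_i$. The lower bound comes from the opposite pairing in each regime: for $p>n+2$, comparing $c'R_i^{p_i}\le T(\Omega_i)\le C R_i^{(p_i-1)(n+2)/(n+1)}$ gives $R_i^{(n+2-p_i)/(n+1)}\le C/c'$ with a \emph{negative} exponent, hence $R_i$ bounded below; for $p<n+2$, comparing $c'R_i^{p_i}\le T(\Omega_i)\le T(B)R_i^{n+2}$ gives $R_i^{p_i-n-2}\le T(B)/c'$, again a negative exponent and again a lower bound. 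All three of the estimates you wrote are needed in both cases; once both pairings are invoked, the two-sided bound is established and the remainder of your argument (Blaschke selection, weak limit identification, uniqueness from \cite{Chen20}, subsequence principle) is sound.
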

\begin{coro}\label{co}
Let $\mu_{i}$ and $\mu$ be finite Borel measures on $\sn$ which are not concentrated on any closed hemisphere. Suppose that $\Omega_{i}$ and $\Omega$ are convex bodies in $\rnnn$ containing the origin in their interiors such that $d\mu^{tor}_{p}(\Omega_{i},\cdot)=d\mu_{i}$ and $d\mu^{tor}_{p}(\Omega,\cdot)=d\mu$ respectively for $1<p<\infty$ and $p\neq n+2$. If $\mu_{i}\rightarrow \mu$ weakly, then $\Omega_{i}\rightarrow \Omega $ as $i\rightarrow \infty$.
\end{coro}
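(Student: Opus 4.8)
The plan is to deduce Corollary~\ref{co} directly from Theorem~\ref{main} by unwinding the definition \eqref{pmea1} of the $L_p$ torsion measure, so the whole argument is essentially a change of notation.

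First I would observe that, since each $\Omega_i$ and $\Omega$ contains the origin \emph{in its interior}, the support functions $h(\Omega_i,\cdot)$ and $h(\Omega,\cdot)$ are continuous and strictly positive on $\sn$; consequently $h(\Omega_i,\cdot)^{1-p}$ and $h(\Omega,\cdot)^{1-p}$ are bounded Borel functions, the identity \eqref{pmea1} genuinely defines $\mu^{tor}_p(\Omega_i,\cdot)$ and $\mu^{tor}_p(\Omega,\cdot)$ as finite Borel measures on $\sn$, and the hypotheses $d\mu^{tor}_p(\Omega_i,\cdot)=d\mu_i$ and $d\mu^{tor}_p(\Omega,\cdot)=d\mu$ are meaningful.

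Next I would rewrite those hypotheses in the exact form demanded by Theorem~\ref{main}. By \eqref{pmea1},
\begin{equation*}
d\mu_i = d\mu^{tor}_p(\Omega_i,\cdot) = h(\Omega_i,\cdot)^{1-p}\,d\mu^{tor}(\Omega_i,\cdot),
\end{equation*}
and multiplying through by the positive function $h(\Omega_i,\cdot)^{p-1}$ yields the equivalent relation $d\mu^{tor}(\Omega_i,\cdot)=h(\Omega_i,\cdot)^{p-1}\,d\mu_i$; the same manipulation gives $d\mu^{tor}(\Omega,\cdot)=h(\Omega,\cdot)^{p-1}\,d\mu$. All remaining hypotheses of Theorem~\ref{main} then hold verbatim: $\mu_i,\mu$ are finite Borel measures on $\sn$ not concentrated on any closed hemisphere; $1<p<\infty$ and $p\neq n+2$; each $\Omega_i$ and $\Omega$ contains the origin (an interior point being a fortiori a point of the body); and $\mu_i\to\mu$ weakly by assumption. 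Theorem~\ref{main} now gives $\Omega_i\to\Omega$ in the Hausdorff metric as $i\to\infty$, which is precisely the assertion of the corollary.

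There is no substantive obstacle here, since the corollary is a direct specialization of Theorem~\ref{main}; the only points meriting a word of care are that requiring the origin to lie in the \emph{interior} of the bodies is exactly what makes the passage between $\mu^{tor}_p$ and $\mu^{tor}$ through \eqref{pmea1} legitimate (via positivity of the support function), and that the weak-convergence hypothesis is imposed on the $L_p$ torsion measures themselves, which are precisely the ``base'' measures $\mu_i,\mu$ appearing in Theorem~\ref{main}.
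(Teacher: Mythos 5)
Your proof is correct, and it is precisely the intended deduction: the paper states the corollary without a separate proof because it follows from Theorem~\ref{main} by unwinding definition \eqref{pmea1}, exactly as you do. You have rightly identified the one point of care, namely that the origin being \emph{interior} to each body makes $h(\Omega_i,\cdot)$ strictly positive (and bounded below on $\sn$), so multiplying the measure identity by $h(\Omega_i,\cdot)^{p-1}$ is legitimate and converts $d\mu^{tor}_p(\Omega_i,\cdot)=d\mu_i$ into the form $d\mu^{tor}(\Omega_i,\cdot)=h(\Omega_i,\cdot)^{p-1}d\mu_i$ required by Theorem~\ref{main}.
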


The organization of this paper goes as follows. In Sec.\ref{Sec2}, we collect some facts about convex bodies and torsional rigidity. In Sec.\ref{Sec3}, we study two related extremal problem associated to the $L_{p}$ Minkowski problem for torsional rigidity. In Sec.\ref{Sec4}, we give the proof of the main theorems of this paper.

\section{Backgrounds}
\label{Sec2}
In this section, we collect some basic facts about convex bodies and torsional rigidity that we shall use in what follows.
\subsection{Basics of convex bodies}

 For standard and good references regarding convex bodies, please refer to Gardner \cite{G06} and Schneider \cite{S14}.

 Let denote by ${\rnnn}$ the $n$-dimensional Euclidean space, and by $o$ the origin of ${\rnnn}$. For $x,y\in {\rnnn}$, $x\cdot y$ denotes the standard inner product.  For $x\in{\rnnn}$, denote by $|x|=\sqrt{x\cdot x}$ the Euclidean norm. The origin-centered ball $B$ is denoted by $\{x\in {\rnnn}:|x|\leq 1\}$, and its boundary by ${\sn}$. A compact convex set of ${\rnnn}$ with non-empty interior is called by a convex body.

If $\Omega$ is a compact convex set in ${\rnnn}$, for $x\in{\rnnn}$, the support function of $\Omega$ is defined by
\[
h(\Omega,x)=\max\{x\cdot y:y \in \Omega\}.
\]

For compact convex sets $\Omega$ and $L$ in ${\rnnn}$, any real $a_{1},a_{2}\geq 0$, define the Minkowski combination of $a_{1}\Omega+a_{2}L$ in ${\rnnn}$ by
\[
a_{1}\Omega+a_{2}L=\{a_{1}x+a_{2}y:x\in \Omega,\ y\in L\},
\]
and its support function is given by
\[
h({a_{1}\Omega+a_{2}L},\cdot)=a_{1}h(\Omega,\cdot)+a_{2}h(L,\cdot).
\]

 The Hausdorff metric $\delta(\Omega,L)$ between two compact convex sets $\Omega$ and $L$ in ${\rnnn}$, is expressed as
\[
\delta(\Omega,L)=\max\{|h(\Omega,\theta)-h(L,\theta)|:\theta\in {\sn}\}.
\]
Let $\Omega_{j}$ be a sequence of compact convex set in ${\rnnn}$. For a compact convex set $\Omega_{0}$ in ${\rnnn}$, if $\delta(\Omega_{j},\Omega_{0})\rightarrow 0$, then $\Omega_{j}$ converges to $\Omega_{0}$, or equivalently, $\sup_{\theta\in{\sn}}|h(\Omega_{j},\theta)-h(\Omega_{0},\theta)|\rightarrow0$, as $j\rightarrow \infty$.

Let $C({\sn})$ be the set of continuous functions defined on the unit sphere ${\sn}$, and $C^{+}({\sn})$ be the set of strictly positive functions in $C({\sn})$. For $f_{1}, f_{2}\in C^{+}({\sn})$, $1\leq p\leq \infty$, the $L_{p}$ sum of $f_{1}$ and $f_{2}$ is defined by
\[
(f_{1}+_{p}t\cdot f_{2})=(f^{p}_{1}+tf^{p}_{2})^{\frac{1}{p}},
\]
for $t> 0$. Here for brevity, we omit the subscript $p$ under the dot.

For any convex body $\Omega$ in ${\rnnn}$ and $\theta\in {\sn}$, the support hyperplane $H(\Omega,\theta)$ in direction $\theta$ is defined by
\[
H(\Omega,\theta)=\{x\in {\rnnn}:x\cdot \theta=h(\Omega,\theta)\},
\]
the half-space $H^{-}(\Omega,\theta)$ in the direction $\theta$ is defined by
\[
H^{-}(\Omega,\theta)=\{x\in {\rnnn}:x\cdot \theta\leq h(\Omega,\theta)\},
\]
and the support set $F(\Omega,\theta)$ in the direction $\theta$ is defined by
\[
F(\Omega,\theta)=\Omega\cap H(\Omega,\theta).
\]

For nonnegative $f\in C(\sn)$, define the Aleksandrov body (also known as Wulff shape) with respect to $f$ by
\[
[f]=\bigcap_{\xi \in \sn}\{x\in \rnnn:x\cdot \xi\leq f(\xi)\}.
\]
Clearly, $[f]$ is a compact convex set containing the origin, and we can see that
\[
h_{[f]}\leq f.
\]

For a compact convex set $\Omega$ in ${\rnnn}$, the diameter of $\Omega$ is denoted by
\[
diam(\Omega)=\max\{|x-y|:x,y \in \Omega\}.
\]

Denote by $\mathcal{P}$ the set of polytopes in ${\rnnn}$, assume that the unit vectors $u_{1},\ldots,u_{N}$ $(N\geq n+1)$ are not concentrated on any closed hemisphere of ${\sn}$, by $\mathcal{P}(u_{1},\ldots,u_{N})$ the set with $P\in \mathcal{P}(u_{1},\ldots,u_{N})$ satisfying
\[
P=\bigcap_{k=1}^{N}H^{-}(P,u_{k}).
\]
 Clearly, we see that, for $P\in \mathcal{P}(u_{1},\ldots,u_{N})$, then $P$ has at most $N$ facets and the outer normals of $P$ are a subset of $\{u_{1},\ldots,u_{N}\}$.

\subsection{Basics of torsional rigidity}

We list some properties of the torsional rigidity and torsion measure (see \cite{CA05,CF10}). Let $\{\Omega_{i}\}_{i=0}^{\infty}$ be a sequence of convex bodies in ${\rnnn}$. On one hand, in terms of torsional rigidity, there are the following results.
\begin{lem}\label{T1}
(i) It is positively homogenous of order $n+2$, i.e.,
\begin{equation*}\label{torhom}
T(m\Omega_{0})=m^{n+2}T(\Omega_{0}),\ m> 0.
\end{equation*}

(ii) It is translation invariant. That is
\begin{equation*}\label{tranin}
T(\Omega_{0} +x_{0})=T(\Omega_{0}),\ \forall x_{0}\in {\rnnn}.
\end{equation*}

(iii) If $\Omega_{i}$ converges to $\Omega_{0}$ in the Hausdorff metric as $i\rightarrow \infty$ (i.e., $\delta(\Omega_{i},\Omega_{0})\rightarrow 0$ as $i\rightarrow \infty$), then
\begin{equation*}\label{Thousdo}
\lim_{i\rightarrow \infty}T(\Omega_{i})=T(\Omega_{0}).
\end{equation*}

(iv) It is monotone increasing, i.e., $T(\Omega_{1})\leq T(\Omega_{2})$ if $\Omega_{1}\subset \Omega_{2}$.
\end{lem}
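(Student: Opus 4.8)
The plan is to read all four assertions off from the two equivalent descriptions of $T$ recalled above --- the Rayleigh-type variational formula and the torsion problem \eqref{torlapu} --- disposing of (iv), (i) and (ii) by elementary manipulations and then feeding these into a squeezing argument for (iii). For (iv): if $\Omega_{1}\subseteq\Omega_{2}$, then extension by zero embeds $W^{1,2}_{0}(\Omega_{1})$ into $W^{1,2}_{0}(\Omega_{2})$ and leaves both integrals in the Rayleigh quotient unchanged, so the infimum defining $1/T(\Omega_{2})$ is taken over a larger admissible class than that for $1/T(\Omega_{1})$; hence $1/T(\Omega_{2})\le 1/T(\Omega_{1})$, i.e. $T(\Omega_{1})\le T(\Omega_{2})$.

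For (i) and (ii), let $u$ solve \eqref{torlapu} on $\Omega_{0}$. For $m>0$ put $w(x)=m^{2}u(x/m)$; then $\nabla w(x)=m(\nabla u)(x/m)$ and $\Delta w(x)=(\Delta u)(x/m)=-2$ on $m\Omega_{0}$ with $w=0$ on $\partial(m\Omega_{0})$, so by uniqueness $w$ is the torsion function of $m\Omega_{0}$. The substitution $y=x/m$ then gives
\begin{equation*}
T(m\Omega_{0})=\int_{m\Omega_{0}}|\nabla w|^{2}\,dx=m^{2}\int_{m\Omega_{0}}|(\nabla u)(x/m)|^{2}\,dx=m^{n+2}\int_{\Omega_{0}}|\nabla u|^{2}\,dy=m^{n+2}T(\Omega_{0}),
\end{equation*}
which is (i); the same substitution inside the Rayleigh quotient works equally well. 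Translation invariance (ii) is immediate, since $u(\,\cdot-x_{0})$ solves \eqref{torlapu} on $\Omega_{0}+x_{0}$ and the Dirichlet energy is translation invariant.

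Assertion (iii) is the only one with real content, and I would prove it by a squeezing argument using (i), (ii) and (iv). Since $\Omega_{0}$ is a convex body, fix $x_{0}$ and $r>0$ with $B(x_{0},2r)\subseteq\Omega_{0}$, and write $\delta_{i}=\delta(\Omega_{i},\Omega_{0})$, so $\delta_{i}\to0$. A separating-hyperplane estimate together with convexity shows $B(x_{0},r)\subseteq\Omega_{i}$ as soon as $\delta_{i}\le r$, while $\Omega_{i}\subseteq\Omega_{0}+\delta_{i}B$ and $\Omega_{0}\subseteq\Omega_{i}+\delta_{i}B$ hold by definition of $\delta$. Using the elementary inclusion that a convex body $K\supseteq B(z,\rho)$ satisfies $K+\delta B\subseteq z+(1+\delta/\rho)(K-z)$ --- which holds because $\delta B\subseteq(\delta/\rho)(K-z)$ and $K+t(K-z)=z+(1+t)(K-z)$ for $t\ge0$ --- once with $K=\Omega_{0},\ \rho=2r$ and once with $K=\Omega_{i},\ \rho=r$, one gets, for all large $i$,
\begin{equation*}
x_{0}+(1+\delta_{i}/r)^{-1}(\Omega_{0}-x_{0})\ \subseteq\ \Omega_{i}\ \subseteq\ x_{0}+(1+\delta_{i}/(2r))(\Omega_{0}-x_{0}).
\end{equation*}
Applying (iv) and then (i) and (ii) would then yield $(1+\delta_{i}/r)^{-(n+2)}T(\Omega_{0})\le T(\Omega_{i})\le(1+\delta_{i}/(2r))^{n+2}T(\Omega_{0})$, and letting $i\to\infty$ gives the claim. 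The only delicate point in the whole lemma is this last step: one must know that $\Omega_{i}$ eventually contains a ball whose radius is bounded away from zero, which is precisely where the hypothesis that $\Omega_{0}$ has nonempty interior enters, and one must check the two convex-geometric inclusions feeding the sandwich; everything else is routine bookkeeping with the change of variables and the uniqueness for \eqref{torlapu}, and (iii) may alternatively simply be quoted from \cite{CA05}.
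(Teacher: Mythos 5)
Your proof is correct. The paper itself does not supply an argument for this lemma but simply cites Colesanti \cite{CA05} and Colesanti--Fimiani \cite{CF10}, so there is no ``paper proof'' to compare against; your write-up is the standard self-contained derivation one would find in those references. Parts (i), (ii), (iv) are read off directly from the two characterizations of $T$ exactly as you do, and your squeezing argument for (iii) --- sandwiching $\Omega_i$ between inner and outer dilates of $\Omega_0$ about an interior point and then invoking (i), (ii), (iv) --- is the usual route, with the one genuinely delicate ingredient being the observation that $B(x_0,2r)\subseteq\Omega_0\subseteq\Omega_i+\delta_iB$ forces $B(x_0,2r-\delta_i)\subseteq\Omega_i$ (a support-function comparison), so that the inner ball persists uniformly once $\delta_i\le r$. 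That step is correctly identified and correctly handled, and it is precisely where the hypothesis that $\Omega_0$ is a convex \emph{body} (nonempty interior) enters. Nothing is missing.
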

On the other hand, in terms of torsion measure, there are also the following results.

\begin{lem}\label{meau1}
(a) It is positively homogenous of order $n+1$, i.e.,
\begin{equation*}\label{Tmeahom}
\mu^{tor}(m\Omega_{0},\cdot)=m^{n+1}\mu^{tor}(\Omega_{0},\cdot),\ m> 0.
\end{equation*}

(b) It is translation invariant. That is
\begin{equation*}\label{Tmeatranin}
\mu^{tor}(\Omega_{0} +x_{0})=\mu^{tor}(\Omega_{0}),\ \forall x_{0}\in {\rnnn}.
\end{equation*}

(c) It is absolutely continuous with respect to the surface area measure.

(d) For any fixed $i\in\{0,1,\ldots\}$, if $\Omega_{i}$ converges to $\Omega_{0}$ in the Hausdorff metric as $i\rightarrow \infty$ (i.e., $\delta(\Omega_{i},\Omega_{0})\rightarrow 0$ as $i\rightarrow \infty$), then the sequence of $\mu^{tor}(\Omega_{i},\cdot)$ converges weakly in the sense of measures to $\mu^{tor}(\Omega_{0},\cdot)$ as $i\rightarrow \infty$ .
\end{lem}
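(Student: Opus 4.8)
The plan is to treat parts (a)--(c) as formal consequences of the representation \eqref{tormes2}, and to reserve the real work for (d). For (a), let $u$ be the torsion function of $\Omega_{0}$, i.e.\ the solution of \eqref{torlapu}. Then $u_{m}(x):=m^{2}u(x/m)$ solves $\Delta u_{m}=-2$ in $m\Omega_{0}$ with zero boundary values, so by uniqueness it is the torsion function of $m\Omega_{0}$, and hence $|\nabla u_{m}(x)|^{2}=m^{2}|\nabla u(x/m)|^{2}$. Since $g_{m\Omega_{0}}(mx)=g_{\Omega_{0}}(x)$ and $S(m\Omega_{0},\cdot)=m^{n-1}S(\Omega_{0},\cdot)$, substituting into \eqref{tormes2} gives $\mu^{tor}(m\Omega_{0},\cdot)=m^{2}\cdot m^{n-1}\mu^{tor}(\Omega_{0},\cdot)=m^{n+1}\mu^{tor}(\Omega_{0},\cdot)$. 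Part (b) is the same computation with $v(x):=u(x-x_{0})$, the torsion function of $\Omega_{0}+x_{0}$, together with the translation invariance of the Gauss map and of $S(\Omega_{0},\cdot)$. Part (c) is immediate from \eqref{tormes2} itself: for every Borel $\eta\subset\sn$ one has $\mu^{tor}(\Omega_{0},\eta)=\int_{\eta}|\nabla u(g_{\Omega_{0}}^{-1}(\theta))|^{2}\,dS(\Omega_{0},\theta)$, so $\mu^{tor}(\Omega_{0},\cdot)$ admits the nonnegative density $\theta\mapsto|\nabla u(g_{\Omega_{0}}^{-1}(\theta))|^{2}$ with respect to $S(\Omega_{0},\cdot)$, which lies in $L^{1}(\sn,S(\Omega_{0},\cdot))$ because $|\nabla u|\in L^{2}(\partial\Omega_{0},\mathcal H^{n-1})$ (Proposition 2.5 of \cite{CF10}); in particular $S(\Omega_{0},\eta)=0$ forces $\mu^{tor}(\Omega_{0},\eta)=0$.

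The main point is (d). I would test the measures against support functions via the Hadamard formula \eqref{torhadma}. Assume $\delta(\Omega_{i},\Omega_{0})\to0$ and fix a convex body $L$ in $\rnnn$. Then
\[
\int_{\sn}h(L,\theta)\,d\mu^{tor}(\Omega_{i},\theta)=\frac{d}{dt}T(\Omega_{i}+tL)\Big|_{t=0}.
\]
For each fixed $t\ge0$ we have $\delta(\Omega_{i}+tL,\Omega_{0}+tL)\to0$, so $T(\Omega_{i}+tL)\to T(\Omega_{0}+tL)$ by Lemma \ref{T1}(iii); consequently the functions $g_{i}(t):=T(\Omega_{i}+tL)^{1/(n+2)}$, which are concave in $t$ by the Brunn--Minkowski inequality for torsional rigidity together with homogeneity, converge pointwise to $g_{0}(t):=T(\Omega_{0}+tL)^{1/(n+2)}$, and $g_{0}$ is differentiable at $t=0$. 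The standard fact that a pointwise limit of concave functions has right derivatives converging to that of the limit (at a point of differentiability of the limit) then yields $g_{i}'(0^{+})\to g_{0}'(0^{+})$; combined with $T(\Omega_{i})\to T(\Omega_{0})>0$ and the chain rule, this gives $\frac{d}{dt}T(\Omega_{i}+tL)|_{t=0}\to\frac{d}{dt}T(\Omega_{0}+tL)|_{t=0}$, i.e.
\[
\int_{\sn}h(L,\theta)\,d\mu^{tor}(\Omega_{i},\theta)\longrightarrow\int_{\sn}h(L,\theta)\,d\mu^{tor}(\Omega_{0},\theta).
\]
Taking $L=B$ shows that the total masses $\mu^{tor}(\Omega_{i},\sn)$ stay bounded. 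Since finite linear combinations of support functions of convex bodies are dense in $C(\sn)$ (every $f\in C^{2}(\sn)$ is a difference of two support functions, and $C^{2}(\sn)$ is dense in $C(\sn)$), a routine approximation argument using the uniform mass bound upgrades the convergence above to $\int_{\sn}f\,d\mu^{tor}(\Omega_{i},\theta)\to\int_{\sn}f\,d\mu^{tor}(\Omega_{0},\theta)$ for every $f\in C(\sn)$, which is exactly (d).

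The step I expect to be the crux is the interchange of the limit in $i$ with the Hadamard derivative at $t=0$, that is, the continuity of the first mixed torsional rigidity $T(\cdot,L)$ under Hausdorff convergence. The concavity argument above settles it cleanly once the Brunn--Minkowski inequality for torsional rigidity is in hand; alternatively one can follow \cite{CF10}, where Dahlberg-type estimates for the non-tangential maximal function of $\nabla u$ give direct $L^{2}(\partial\Omega_{i},\mathcal H^{n-1})$ control of the boundary gradients and allow one to pass to the limit in the densities of \eqref{tormes2} against the weakly convergent measures $S(\Omega_{i},\cdot)$. Everything else is bookkeeping with the scaling of the torsion function and with the surface area measure, and is already recorded in \cite{CA05,CF10}.
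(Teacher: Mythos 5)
Parts (a)--(c) of your proposal are correct, and they are the routine scaling/translation/density arguments one would expect (note the paper itself gives no proof of this lemma at all; it simply cites \cite{CA05,CF10}). The genuine problem is in (d), exactly at the step you yourself flag as the crux. The functions $g_i(t)=T(\Omega_i+tL)^{1/(n+2)}$ are concave and converge pointwise to $g_0$ only on $[0,\infty)$: the Minkowski combination $\Omega_i+tL$, and with it the variational formula of Lemma \ref{argu2}, is available only for $t\geq 0$, so $t=0$ is the left endpoint of the common domain, not an interior point. The ``standard fact'' that pointwise convergence of concave functions forces convergence of (one-sided) derivatives at a differentiability point of the limit is an interior-point statement and fails at an endpoint: $g_i(t)=\min\{2t,\;t+1/i\}$ on $[0,\infty)$ is concave, converges pointwise to $g_0(t)=t$, $g_0'(0^+)=1$, yet $g_i'(0^+)=2$ for every $i$ (and with $\min\{it,\;t+1/i\}$ the right derivatives even blow up). What concavity does give is one inequality: from $g_i'(0^+)\geq\bigl(g_i(t)-g_i(0)\bigr)/t$, letting $i\to\infty$ and then $t\to0^+$, one gets $\liminf_i\int_{\sn}h(L,\theta)\,d\mu^{tor}(\Omega_i,\theta)\geq\int_{\sn}h(L,\theta)\,d\mu^{tor}(\Omega_0,\theta)$, i.e. lower semicontinuity of the mixed torsional rigidity $T(\cdot,L)$, but not the matching upper bound; and without that upper bound you cannot identify the weak-$*$ limits of the (indeed uniformly bounded, by Lemma \ref{argu}) measures $\mu^{tor}(\Omega_i,\cdot)$ with $\mu^{tor}(\Omega_0,\cdot)$.

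To close the gap you would need either a genuinely two-sided variational formula (a derivative of $T$ along Wulff shapes $[h(\Omega_i,\cdot)+tf]$ for small $|t|$, which is not what Lemma \ref{argu2} provides and is not established in this paper), or the route you mention only as an aside: the argument of \cite{CF10}, where convergence of the torsion functions $u_i$ together with Dahlberg-type $L^{2}(\partial\Omega_i,\mathcal H^{n-1})$ control of the nontangential limits of $\nabla u_i$ allows one to pass to the limit directly in the representation \eqref{tormes2}. That is precisely the proof the paper is invoking when it cites \cite{CA05,CF10} for this lemma. So, as written, your proposal has a real gap at its central step; the surrounding material --- parts (a)--(c), the uniform mass bound, and the density-of-support-function-differences argument to upgrade convergence against support functions to full weak convergence --- is fine once that step is supplied.
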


In addition, for reader's convenience, we list the following lemmas proved by Colesanti-Fimiani \cite{CF10}, which play a crucial role in this paper.
\begin{lem}\label{argu}Let $\Omega$  be a compact convex set in ${\rnnn}$ and let $u$ be the solution of ~\eqref{torlapu} in $\Omega$, then
\begin{equation*}\label{grdguji}
|\nabla u(x)|\leq diam(\Omega),\ \forall x\in  \Omega.
\end{equation*}
\end{lem}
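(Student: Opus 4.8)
The plan is to deduce the bound from a maximum‑principle argument that pushes the estimate to the boundary, followed by a one‑dimensional barrier comparison at the boundary. Since $u$ is (real–analytic, hence) smooth in the interior of $\Omega$ and $\Delta u\equiv-2$ is constant, the Bochner‑type identity gives
\begin{equation*}
\Delta\bigl(|\nabla u|^{2}\bigr)=2\,\|D^{2}u\|^{2}+2\,\nabla u\cdot\nabla(\Delta u)=2\,\|D^{2}u\|^{2}\ge 0,
\end{equation*}
so $|\nabla u|^{2}$ is subharmonic in $\mathrm{int}\,\Omega$, and therefore its maximum over $\overline{\Omega}$ is attained on $\partial\Omega$. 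When $\partial\Omega$ is of class $C^{2}$ this is just the classical maximum principle; for a general compact convex set I would recover it by approximating $\Omega$ from outside by a decreasing sequence of smooth convex bodies $\Omega_{k}$ with $\mathrm{diam}(\Omega_{k})\to\mathrm{diam}(\Omega)$, noting that the associated torsion functions $u_{k}$ decrease to $u$ locally uniformly in $\mathrm{int}\,\Omega$ (every boundary point of a convex body is regular for the Dirichlet problem), so that $\nabla u_{k}\to\nabla u$ locally uniformly by interior elliptic estimates and the bound for $\Omega_{k}$ passes to the limit at every interior point of $\Omega$.

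It then remains to estimate $|\nabla u|$ at a boundary point $x_{0}$ where $\partial\Omega$ has a tangent hyperplane. Choosing coordinates with $x_{0}$ at the origin and inner unit normal $e_{n}$, convexity gives $\Omega\subset\{x: 0\le x_{n}\le d\}$ with $d=\mathrm{diam}(\Omega)$: the left inequality holds because $\Omega$ lies in the supporting half‑space at $x_{0}$, the right one because $x_{n}=(x-x_{0})\cdot e_{n}\le|x-x_{0}|\le d$ for $x\in\Omega$. Set $w(x)=x_{n}(d-x_{n})$, so that $\Delta w=-2=\Delta u$ and $w\ge 0=u$ on $\partial\Omega$; by the maximum principle $w\ge u$ on $\overline{\Omega}$. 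Since $w(x_{0})=u(x_{0})=0$, the nonnegative function $w-u$ attains a global minimum at $x_{0}$, whence
\begin{equation*}
0\le\partial_{e_{n}}(w-u)(x_{0})=d-\partial_{e_{n}}u(x_{0})=d-|\nabla u(x_{0})|,
\end{equation*}
the last equality because $u\equiv 0$ on $\partial\Omega$ forces $\nabla u(x_{0})$ to point along $e_{n}$, with $\partial_{e_{n}}u(x_{0})>0$ by the Hopf lemma. Hence $|\nabla u(x_{0})|\le\mathrm{diam}(\Omega)$, and combined with the first step this yields $|\nabla u(x)|\le\mathrm{diam}(\Omega)$ for every $x\in\Omega$.

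The hard part is not the computation — both the subharmonicity step and the barrier comparison are routine once $\partial\Omega\in C^{2}$ — but rather the lack of boundary regularity for an arbitrary compact convex set. I would dispatch this via the smooth exhaustion above, which keeps every limiting argument inside $\mathrm{int}\,\Omega$ and never evaluates $\nabla u$ at a singular boundary point; alternatively one can appeal to the existence of finite non‑tangential limits of $\nabla u$ on $\partial\Omega$ recalled before the lemma, since any such limit is a limit of interior values already bounded by $\mathrm{diam}(\Omega)$ through the first step.
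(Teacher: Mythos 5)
Your proof is correct. The paper itself does not prove this lemma — it is quoted verbatim from Colesanti–Fimiani \cite{CF10} — and your argument (subharmonicity of $|\nabla u|^{2}$ via Bochner since $\Delta u\equiv -2$ is constant, reduction to the boundary, and the slab barrier $w(x)=x_{n}(d-x_{n})$ at a regular boundary point, with a smooth outer exhaustion to handle lack of boundary regularity) is precisely the standard gradient estimate used there, so this is essentially the same approach. One small remark: the appeal to the Hopf lemma is unnecessary; you only need $\partial_{e_{n}}u(x_{0})\ge 0$, which follows from $u\ge 0=u(x_{0})$ in $\Omega$, to identify $|\nabla u(x_{0})|$ with $\partial_{e_{n}}u(x_{0})$.
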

\begin{lem}\label{argu2}Let $\Omega$, $\Omega_{1}$ be convex bodies in ${\rnnn}$ and let $h(\Omega_{1},\cdot) $ be the support function of $\Omega_{1}$. For sufficiently small $|t|> 0$, there is
\begin{equation*}\label{Thadama6}
\frac{d}{dt}T(\Omega+t\Omega_{1})\big|_{t=0}=\int_{{\sn}}h(\Omega_{1},\theta){d}\mu^{tor}(\Omega,\theta).
\end{equation*}
\end{lem}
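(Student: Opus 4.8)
The identity is the first (Hadamard-type) variational formula for torsional rigidity under Minkowski addition, and the plan is to establish it first for smooth, strictly convex $\Omega$ and $\Omega_{1}$ via the classical domain-perturbation calculus, and then to pass to arbitrary convex bodies by Hausdorff approximation, using the weak continuity of the torsion measure (Lemma \ref{meau1}(d)) and the continuity of $T$ (Lemma \ref{T1}(iii)). Throughout I will use two elementary consequences of $\Delta u=-2$ in $\Omega$, $u=0$ on $\partial\Omega$: integration by parts gives $T(\Omega)=\int_{\Omega}|\nabla u|^{2}\,dx=2\int_{\Omega}u\,dx$, and on $\partial\Omega$ the gradient is purely normal with $\partial_{\nu}u=-|\nabla u|\le 0$ (because $u>0$ in the interior).

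\emph{The smooth case.} First I would take $\Omega$ and $\Omega_{1}$ of class $C^{\infty}$ and strictly convex and set $\Omega_{t}:=\Omega+t\Omega_{1}$, with support function $h(\Omega,\cdot)+t\,h(\Omega_{1},\cdot)$; for $|t|$ small this is again the support function of a smooth strictly convex body, so $\Omega_{t}$ is the image of $\Omega$ under a smooth family of diffeomorphisms $\Phi_{t}$ with $\Phi_{0}$ the identity. The boundary point of $\Omega_{t}$ with outer unit normal $\nu$ is the Minkowski sum of the corresponding boundary points of $\Omega$ and $t\Omega_{1}$, so the normal component of the boundary deformation velocity is $V(\nu)=\nabla h(\Omega_{1},\nu)\cdot\nu=h(\Omega_{1},\nu)$, the last equality by Euler's identity for the $1$-homogeneous function $h(\Omega_{1},\cdot)$. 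Let $u_{t}$ be the torsion function of $\Omega_{t}$. Transporting the boundary-value problem to the fixed domain $\Omega$ via $\Phi_{t}$ and invoking the implicit function theorem with Schauder estimates shows that $t\mapsto u_{t}$ is differentiable, and its (local) shape derivative $\dot u$ solves $\Delta\dot u=0$ in $\Omega$ with $\dot u=-(\partial_{\nu}u)\,V=|\nabla u|\,h(\Omega_{1},\nu)$ on $\partial\Omega$. Since $T(\Omega_{t})=2\int_{\Omega_{t}}u_{t}\,dx$ and $u_{t}$ vanishes on $\partial\Omega_{t}$, the transport (Reynolds) formula gives $\frac{d}{dt}T(\Omega_{t})|_{t=0}=2\int_{\Omega}\dot u\,dx$, and Green's identity applied to the pair $(\dot u,u)$, using $\Delta u=-2$, $\Delta\dot u=0$ and $u|_{\partial\Omega}=0$, turns this into
\[
\frac{d}{dt}T(\Omega_{t})\Big|_{t=0}=-\int_{\partial\Omega}\dot u\,\partial_{\nu}u\,d\mathcal{H}^{n-1}=\int_{\partial\Omega}|\nabla u(x)|^{2}\,h(\Omega_{1},\nu(x))\,d\mathcal{H}^{n-1}(x).
\]
Pushing this forward by the Gauss map $g_{\Omega}$ and recalling \eqref{tormes2}--\eqref{tordef3}, the right side is exactly $\int_{\sn}h(\Omega_{1},\theta)\,d\mu^{tor}(\Omega,\theta)$; this proves the formula for smooth bodies, and, since $|t|$ may have either sign, as a two-sided derivative.

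\emph{The general case.} Then I would fix smooth, strictly convex bodies $\Omega^{(k)}\to\Omega$ and $\Omega_{1}^{(k)}\to\Omega_{1}$ in the Hausdorff metric and set $g_{k}(s):=T(\Omega^{(k)}+s\Omega_{1}^{(k)})$, $g(s):=T(\Omega+s\Omega_{1})$ for $s\in[0,\delta]$ with $\delta$ small. By the smooth case applied with base body $\Omega^{(k)}+s\Omega_{1}^{(k)}$, each $g_{k}$ is $C^{1}$ with $g_{k}'(s)=\int_{\sn}h(\Omega_{1}^{(k)},\theta)\,d\mu^{tor}(\Omega^{(k)}+s\Omega_{1}^{(k)},\theta)$. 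For each fixed $s$ one has $\Omega^{(k)}+s\Omega_{1}^{(k)}\to\Omega+s\Omega_{1}$ in the Hausdorff metric, so $g_{k}(s)\to g(s)$ by Lemma \ref{T1}(iii), $h(\Omega_{1}^{(k)},\cdot)\to h(\Omega_{1},\cdot)$ uniformly on $\sn$, and $\mu^{tor}(\Omega^{(k)}+s\Omega_{1}^{(k)},\cdot)\to\mu^{tor}(\Omega+s\Omega_{1},\cdot)$ weakly by Lemma \ref{meau1}(d); hence $g_{k}'(s)\to c(s):=\int_{\sn}h(\Omega_{1},\theta)\,d\mu^{tor}(\Omega+s\Omega_{1},\theta)$. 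Since the bodies $\Omega^{(k)}+s\Omega_{1}^{(k)}$ all contain a fixed ball and $T$ is bounded on them, \eqref{tordef3} bounds $g_{k}'(s)$ uniformly in $k$ and $s\in[0,\delta]$, so passing to the limit in $g_{k}(t)=g_{k}(0)+\int_{0}^{t}g_{k}'(s)\,ds$ by dominated convergence gives $g(t)=g(0)+\int_{0}^{t}c(s)\,ds$. As $c$ is continuous on $[0,\delta]$ (again by Lemma \ref{meau1}(d) together with the uniform continuity of $h(\Omega_{1},\cdot)$), $g$ is $C^{1}$ there and $g'(0)=c(0)=\int_{\sn}h(\Omega_{1},\theta)\,d\mu^{tor}(\Omega,\theta)$, which is the claim; the interval $s\le 0$ is symmetric whenever $\Omega+s\Omega_{1}$ is defined, e.g.\ as the Wulff shape of $h(\Omega,\cdot)+s\,h(\Omega_{1},\cdot)$.

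\emph{Main obstacle.} The computation in the smooth case is classical (cf.\ \cite{CA05}), so the delicate content sits elsewhere. Within the smooth step the two points needing care are the differentiability of $t\mapsto u_{t}$ and the precise boundary trace $\dot u=|\nabla u|\,h(\Omega_{1},\nu)$, which force one to transport the problem to a fixed domain and use elliptic regularity, together with the Euler-identity computation of the normal velocity. The real depth, however, is that the whole approximation rests on Lemma \ref{meau1}(d): for a merely convex (hence Lipschitz-boundary) $\Omega$, the torsion measure $\mu^{tor}(\Omega,\cdot)$ is defined, and depends weakly-continuously on $\Omega$, only because of the Dahlberg-type boundary estimates recalled in the introduction, which give $\nabla u$ finite non-tangential limits $\mathcal{H}^{n-1}$-a.e.\ on $\partial\Omega$ with $|\nabla u|\in L^{2}(\partial\Omega,\mathcal{H}^{n-1})$; without that input there is no non-smooth object for the smooth formula to converge to. A route avoiding even the smooth shape calculus would replace it by a direct pair of matching one-sided bounds for $T(\Omega+t\Omega_{1})-T(\Omega)$---the lower one by testing $T(\Omega+t\Omega_{1})$ against a transported copy of $u_{\Omega}$, the upper one by correcting the torsion function of $\Omega+t\Omega_{1}$ to a competitor for $T(\Omega)$ via its harmonic extension and estimating the remainder with Lemma \ref{argu}---but building the transport maps for Lipschitz boundaries is itself the bulk of the work.
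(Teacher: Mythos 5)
The paper itself contains no proof of this lemma: it is quoted from Colesanti--Fimiani \cite{CF10} (with the smooth-case computation going back to \cite{CA05}), so the only meaningful comparison is with that source, and your argument is essentially a reconstruction of it. The smooth step is correct: the normal velocity of $\partial(\Omega+t\Omega_{1})$ is $h(\Omega_{1},\nu)$ by Euler's identity, the shape derivative $\dot u$ is harmonic with $\dot u=|\nabla u|\,h(\Omega_{1},\nu)$ on $\partial\Omega$, and Green's identity together with $T(\Omega)=2\int_{\Omega}u\,dx$ gives the boundary integral, which \eqref{tormes2} converts into $\int_{\sn}h(\Omega_{1},\theta)\,d\mu^{tor}(\Omega,\theta)$. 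The approximation step is the same device as in \cite{CF10}: write $g_{k}(t)=g_{k}(0)+\int_{0}^{t}g_{k}'(s)\,ds$ for smooth strictly convex approximants, pass to the limit using Lemma \ref{T1}(iii), Lemma \ref{meau1}(d) and a uniform bound on $g_{k}'$ (after translating both bodies so that all the approximating sums contain a fixed ball centred at the origin, which is harmless since both sides of the identity are translation invariant), and conclude from the continuity of $s\mapsto\int_{\sn}h(\Omega_{1},\theta)\,d\mu^{tor}(\Omega+s\Omega_{1},\theta)$. This correctly yields the formula as a right derivative at $t=0$, which is the content actually used in the paper.

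The gap is the negative-$t$ side, which the statement's ``$|t|>0$'' nominally requires. Read literally, $\Omega+t\Omega_{1}$ for $t<0$ is $\Omega+|t|(-\Omega_{1})$, and then the left derivative equals $-\int_{\sn}h(\Omega_{1},-\theta)\,d\mu^{tor}(\Omega,\theta)$, which differs from the right derivative by $\int_{\sn}\bigl(h(\Omega_{1},\theta)+h(\Omega_{1},-\theta)\bigr)\,d\mu^{tor}(\Omega,\theta)>0$; so under that reading the two-sided formula is false, and the lemma must be understood one-sidedly (as in \cite{CF10}) or with $\Omega+t\Omega_{1}:=[h(\Omega,\cdot)+t\,h(\Omega_{1},\cdot)]$ for $t<0$. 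Your closing sentence adopts the Wulff-shape reading, but the claim that this case is \emph{symmetric} to $t\geq 0$ is not accurate: for fixed $t<0$ the functions $h(\Omega^{(k)},\cdot)+t\,h(\Omega^{(k)}_{1},\cdot)$ need not remain support functions uniformly in $k$ (the admissible negative range degenerates as the approximants of a nonsmooth $\Omega$ lose curvature), so your $t\geq 0$ approximation scheme does not transfer; one needs an additional Aleksandrov-type argument for Wulff-shape perturbations, e.g.\ exploiting the absolute continuity of $\mu^{tor}(\Omega,\cdot)$ with respect to the surface area measure (Lemma \ref{meau1}(c)) and the fact that $h_{[f]}=f$ holds $S([f],\cdot)$-a.e., which is exactly what \cite{CF10} supply. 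Since the paper only invokes the first variation through \cite{CF10}, nothing downstream is affected, but a self-contained proof should treat this case rather than assert it.
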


We end this section by recalling the important Brunn-Minkowski type inequality and Minkowski inequality of torsional rigidity (see \cite{CA05}).

\begin{lem}\label{BM} Let $\Omega_{0}$, $\Omega_{1}$ be convex bodies in $\rnnn$ and $\lambda\in [0,1]$, the Brunn-Minkowski inequality for torsional rigidity is
\[
T(\lambda\Omega_{0}+(1-\lambda)\Omega_{1})^{1/(n+2)}\geq (1-\lambda)T(\Omega_{0})^{1/(n+2)}+\lambda T(\Omega_{1})^{1/(n+2)},
\]
with equality if and only if $\Omega_{0}$ and $\Omega_{1}$ are homothetic.

\end{lem}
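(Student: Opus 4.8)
This Brunn--Minkowski inequality for torsional rigidity is classical (see \cite{CA05}); the route I would take is the variational / Borell--Brascamp--Lieb method. \textbf{Step 1 (variational reduction).} First I would note that the definition of $T$ gives, for every convex body $\Omega$ and every $v\in W^{1,2}_0(\Omega)$ with $\int_\Omega|v|\,dx>0$,
\[
T(\Omega)\ \geq\ \frac{\big(\int_\Omega|v|\,dx\big)^2}{\int_\Omega|\nabla v|^2\,dx},
\]
so any admissible competitor yields a lower bound for $T(\Omega)$. Write $u_0,u_1,u_\lambda$ for the torsion functions of $\Omega_0,\Omega_1$ and $\Omega_\lambda:=(1-\lambda)\Omega_0+\lambda\Omega_1$ (the solutions of \eqref{torlapu}); here $T(\Omega_i)$ is a fixed dimensional multiple of $\int_{\Omega_i}u_i\,dx$ and of $\int_{\Omega_i}|\nabla u_i|^2\,dx$, a normalisation that is harmless since the asserted inequality is invariant under a common rescaling of $T$. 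I would also use the classical fact that $\sqrt{u_i}$ is concave on the convex body $\Omega_i$; in particular all superlevel sets of $u_i$ are convex, and $g_i:=\sqrt{u_i}$ is a nonnegative, concave, Lipschitz function vanishing on $\partial\Omega_i$.

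\textbf{Step 2 (competitor and $L^1$ bound).} On $\Omega_\lambda$ I would take the competitor $v_\lambda:=g_\lambda^2$, where $g_\lambda$ is the $\lambda$-sup-convolution of $g_0$ and $g_1$,
\[
g_\lambda(z)=\sup\{(1-\lambda)g_0(x)+\lambda g_1(y):\, z=(1-\lambda)x+\lambda y,\ x\in\Omega_0,\ y\in\Omega_1\}.
\]
Then $v_\lambda$ is nonnegative, Lipschitz, and vanishes on $\partial\Omega_\lambda$, hence is admissible in Step 1, and for each splitting $z=(1-\lambda)x+\lambda y$ one has $v_\lambda(z)\geq\big((1-\lambda)u_0(x)^{1/2}+\lambda u_1(y)^{1/2}\big)^2$, i.e.\ $v_\lambda$ dominates the $\tfrac12$-power mean of $u_0$ and $u_1$ along Minkowski combinations. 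Applying the Borell--Brascamp--Lieb inequality with exponent $p=\tfrac12$ (for which $p/(1+np)=\tfrac{1}{n+2}$) yields
\[
\int_{\Omega_\lambda}v_\lambda\,dx\ \geq\ \Big((1-\lambda)\Big(\!\int_{\Omega_0}\!u_0\,dx\Big)^{\frac{1}{n+2}}+\lambda\Big(\!\int_{\Omega_1}\!u_1\,dx\Big)^{\frac{1}{n+2}}\Big)^{n+2},
\]
which, after the normalisation of Step 1, reads $\int_{\Omega_\lambda}v_\lambda\,dx\geq A^{n+2}$ with $A:=(1-\lambda)T(\Omega_0)^{\frac{1}{n+2}}+\lambda T(\Omega_1)^{\frac{1}{n+2}}$.

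\textbf{Step 3 (energy estimate and conclusion).} It then remains to prove $\int_{\Omega_\lambda}|\nabla v_\lambda|^2\,dx\leq A^{n+2}$; since $\int_{\Omega_\lambda}v_\lambda\,dx\geq A^{n+2}>0$ by Step 2, the inequality of Step 1 applied to $v=v_\lambda$ then gives $T(\Omega_\lambda)\geq(\int_{\Omega_\lambda}v_\lambda\,dx)^2/\int_{\Omega_\lambda}|\nabla v_\lambda|^2\,dx\geq A^{2(n+2)}/A^{n+2}=A^{n+2}$, which is the desired inequality. For the energy bound I would use that at an optimal splitting point $z=(1-\lambda)x+\lambda y$ the sup-convolution obeys $\nabla g_\lambda(z)=\nabla g_0(x)=\nabla g_1(y)$, hence $|\nabla v_\lambda(z)|=(1-\lambda)|\nabla u_0(x)|+\lambda|\nabla u_1(y)|$ pointwise, and then push the integral over $\Omega_\lambda$ back onto $\Omega_0$ and $\Omega_1$ along the a.e.\ defined maps $z\mapsto x(z)$, $z\mapsto y(z)$, controlling the Jacobians by an AM--GM bound for determinants based on the concavity (Hessian bounds) of $g_0$ and $g_1$, and finishing with a convexity/Hölder step to recover exactly the exponent $n+2$. \textbf{This energy estimate is the step I expect to be the main obstacle}: it is the only place where the convexity of the bodies is genuinely used, and matching the exponents --- the $2$ from the quadratic numerator and the $n$ from the change of variables --- is delicate. (An essentially equivalent alternative, in the spirit of Borell's potential-theoretic argument, would instead take the quasiconcave function on $\Omega_\lambda$ whose superlevel sets are $(1-\lambda)\{u_0>s\}+\lambda\{u_1>s\}$, show it is a subsolution of \eqref{torlapu} and hence $\leq u_\lambda$ by the maximum principle, and then bound $\int_{\Omega_\lambda}u_\lambda\,dx$ below via the Brunn--Minkowski inequality for the volumes of those level sets.) Finally, for the equality case I would observe that equality forces equality in the Borell--Brascamp--Lieb (equivalently Brunn--Minkowski) step, so $\{u_0>s\}$ and $\{u_1>s\}$ are homothetic for a.e.\ $s$; letting $s\to0^+$ then shows $\Omega_0$ and $\Omega_1$ are homothetic, while conversely homothety of $\Omega_0$ and $\Omega_1$ gives equality at once from parts (i) and (ii) of Lemma \ref{T1}.
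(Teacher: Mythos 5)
The paper does not prove this lemma; it is cited from \cite{CA05}, so there is no in-house argument to compare against. Your outline picks the right circle of ideas --- $\tfrac12$-concavity of the torsion function, the $\lambda$-sup-convolution $g_\lambda$ of $g_0=\sqrt{u_0}$ and $g_1=\sqrt{u_1}$, and Borell--Brascamp--Lieb with $p=\tfrac12$ so that $p/(1+np)=\tfrac{1}{n+2}$ --- and your Steps~1 and~2 are sound.

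The genuine gap is Step~3, exactly where you flag it, and the Jacobian/AM--GM scheme you sketch is not the way to close it: the change-of-variables determinant inequality that powers BBL gives a \emph{lower} bound on $dz$ in terms of $dx$, which is useless for the \emph{upper} bound on $\int_{\Omega_\lambda}|\nabla v_\lambda|^2\,dz$ you need, and a H\"older step only compounds the mismatch. The missing observation is that $v_\lambda=g_\lambda^2$ is a weak subsolution of \eqref{torlapu} on $\Omega_\lambda$, i.e.\ $\Delta v_\lambda\geq-2$: at a point of twice-differentiability with interior optimal splitting $z=(1-\lambda)x+\lambda y$ one has $\nabla g_\lambda(z)=\nabla g_0(x)=\nabla g_1(y)=:p$ and, from the sup-convolution Hessian identity together with the matrix harmonic--arithmetic mean inequality, $\Delta g_\lambda(z)\geq(1-\lambda)\Delta g_0(x)+\lambda\Delta g_1(y)$; combining this with $g_i\Delta g_i=-1-|p|^2$ (the equation $\Delta u_i=-2$ rewritten for $g_i$) and the scalar inequality $\bigl((1-\lambda)g_0+\lambda g_1\bigr)\bigl((1-\lambda)g_0^{-1}+\lambda g_1^{-1}\bigr)\geq1$ gives $\Delta v_\lambda=2g_\lambda\Delta g_\lambda+2|p|^2\geq-2$. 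The energy estimate then drops out by a single integration by parts, legal since $v_\lambda\geq0$ vanishes on $\partial\Omega_\lambda$:
\[
\int_{\Omega_\lambda}|\nabla v_\lambda|^2\,dz=-\int_{\Omega_\lambda}v_\lambda\,\Delta v_\lambda\,dz\leq2\int_{\Omega_\lambda}v_\lambda\,dz.
\]
Feeding this and your Step~2 lower bound into the Rayleigh quotient gives $T(\Omega_\lambda)\geq A^{n+2}$ once the normalisation is sorted (with $\Delta u_i=-2$ one has $\int|\nabla u_i|^2=2\int u_i$, so the variational $T(\Omega_i)$ equals $\tfrac12\int u_i$; as you say, the inequality is insensitive to this constant). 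The ``alternative'' you mention is Borell's original argument and is exactly this subsolution comparison, with $\min(u_0,u_1)$ in place of the $\tfrac12$-mean. Your handling of the equality case, tracking equality back through BBL to homothety, is fine.
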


\begin{lem}\label{M} Let $\Omega_{0}$, $\Omega_{1}$ be convex bodies in $\rnnn$, the Minkowski inequality for torsional rigidity is
\[
T(\Omega_{0},\Omega_{1})^{n+2}\geq T(\Omega_{0})^{n+1}T(\Omega_{1}),
\]
with equality if and only if $\Omega_{0}$ and $\Omega_{1}$ are homothetic.

\end{lem}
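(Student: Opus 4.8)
The plan is to derive the Minkowski inequality for torsional rigidity from the Brunn--Minkowski inequality (Lemma \ref{BM}) together with the Hadamard variational formula (Lemma \ref{argu2}), following the classical scheme. Fix convex bodies $\Omega_0,\Omega_1$ in $\rnnn$. Since both have nonempty interior they each contain a ball, so by monotonicity (Lemma \ref{T1}(iv)) all the torsional rigidities appearing below are strictly positive. For $t\ge 0$ set $\psi(t)=T(\Omega_0+t\Omega_1)^{1/(n+2)}$. The identity $(1-\lambda)(\Omega_0+t_1\Omega_1)+\lambda(\Omega_0+t_2\Omega_1)=\Omega_0+\bigl((1-\lambda)t_1+\lambda t_2\bigr)\Omega_1$ combined with Lemma \ref{BM} shows that $\psi$ is concave on $[0,\infty)$, hence continuous with a right derivative at $0$; by Lemma \ref{argu2} and the chain rule,
\[
\psi'(0)=\frac{1}{n+2}\,T(\Omega_0)^{\frac{1}{n+2}-1}\int_{\sn}h(\Omega_1,\theta)\,d\mu^{tor}(\Omega_0,\theta)=T(\Omega_0)^{-\frac{n+1}{n+2}}\,T(\Omega_0,\Omega_1),
\]
where I used $T(\Omega_0,\Omega_1)=\frac{1}{n+2}\int_{\sn}h(\Omega_1,\theta)\,d\mu^{tor}(\Omega_0,\theta)$.

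The second step is to extract a sharp linear lower bound for $\psi$ from Brunn--Minkowski. For $\lambda\in(0,1)$ write $(1-\lambda)\Omega_0+\lambda\Omega_1=(1-\lambda)\bigl(\Omega_0+\tfrac{\lambda}{1-\lambda}\Omega_1\bigr)$; applying Lemma \ref{BM}, using positive homogeneity of order $n+2$ (Lemma \ref{T1}(i)), and substituting $s=\lambda/(1-\lambda)$ gives
\[
\psi(s)\ \ge\ T(\Omega_0)^{1/(n+2)}+s\,T(\Omega_1)^{1/(n+2)}\qquad\text{for all }s\ge 0 .
\]
Dividing by $s$ and letting $s\to 0^{+}$ yields $\psi'(0)\ge T(\Omega_1)^{1/(n+2)}$, that is, $T(\Omega_0)^{-\frac{n+1}{n+2}}T(\Omega_0,\Omega_1)\ge T(\Omega_1)^{1/(n+2)}$; raising to the power $n+2$ gives $T(\Omega_0,\Omega_1)^{n+2}\ge T(\Omega_0)^{n+1}T(\Omega_1)$.

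For the equality statement, first suppose $\Omega_0$ and $\Omega_1$ are homothetic, say $\Omega_1=c\Omega_0+x$ with $c>0$. Then $T(\Omega_1)=c^{n+2}T(\Omega_0)$ by Lemma \ref{T1}(i)--(ii), while $T(\Omega_0,\Omega_1)=\frac{1}{n+2}\int_{\sn}\bigl(c\,h(\Omega_0,\theta)+x\cdot\theta\bigr)\,d\mu^{tor}(\Omega_0,\theta)=c\,T(\Omega_0)$ once we know the closedness identity $\int_{\sn}\theta\,d\mu^{tor}(\Omega_0,\theta)=0$ (together with $T(\Omega_0,\Omega_0)=T(\Omega_0)$, which is immediate from \eqref{tordef3} and \eqref{tormes2}); substituting into both sides shows equality holds. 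Conversely, if equality holds in the Minkowski inequality then $\psi'(0)=T(\Omega_1)^{1/(n+2)}$, and combining this with the tangent-line bound $\psi(s)\le\psi(0)+s\,\psi'(0)$ coming from concavity and the secant bound of the previous paragraph forces $\psi(s)=T(\Omega_0)^{1/(n+2)}+s\,T(\Omega_1)^{1/(n+2)}$ for every $s\ge 0$; unwinding the substitution, this is precisely equality in Lemma \ref{BM} for $(1-\lambda)\Omega_0+\lambda\Omega_1$ for every $\lambda\in[0,1]$, so the equality condition of Lemma \ref{BM} yields that $\Omega_0$ and $\Omega_1$ are homothetic.

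The routine parts are the positivity and continuity of $\psi$ and the chain-rule evaluation of $\psi'(0)$. The delicate point is the equality analysis: concavity supplies a \emph{tangent-line} upper bound for $\psi$ while Brunn--Minkowski supplies a \emph{secant-line} lower bound through the same endpoint, and equality in Minkowski pins $\psi'(0)$ exactly at the slope of that secant, squeezing $\psi$ onto the line and thereby upgrading ``equality in Minkowski'' to ``equality in Brunn--Minkowski for all $\lambda$''. The one supporting fact I would still need to justify is $\int_{\sn}\theta\,d\mu^{tor}(\Omega_0,\theta)=0$, which follows by differentiating $t\mapsto T\bigl(\Omega_0+t(\Omega_0+x)\bigr)$ via Lemma \ref{argu2} and comparing with the value obtained directly from homogeneity and translation invariance (Lemma \ref{T1}(i)--(ii)).
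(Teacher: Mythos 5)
Your proposal is correct. Note first that the paper itself does not prove Lemma~\ref{M}; it simply recalls it from \cite{CA05}. So there is no in-paper proof to compare against, but your argument is precisely the classical one used to deduce a Minkowski inequality from the corresponding Brunn--Minkowski inequality together with a first-variation formula (and it is the argument in \cite{CA05}, up to the reparametrization $s=\lambda/(1-\lambda)$; the more common presentation differentiates $t\mapsto T\bigl((1-t)\Omega_0+t\Omega_1\bigr)^{1/(n+2)}$ on $[0,1]$ instead of working on $[0,\infty)$, but these are the same computation). All the steps check: concavity of $\psi$ from Lemma~\ref{BM}, $\psi'(0)=T(\Omega_0)^{-(n+1)/(n+2)}T(\Omega_0,\Omega_1)$ from Lemma~\ref{argu2} and the chain rule, the linear lower bound $\psi(s)\geq T(\Omega_0)^{1/(n+2)}+sT(\Omega_1)^{1/(n+2)}$ by homogeneity, and the squeeze argument (tangent-line upper bound from concavity versus the linear lower bound) that reduces equality in Minkowski to equality in Brunn--Minkowski for every $\lambda$. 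The two auxiliary facts you invoke in the equality case are also fine: $T(\Omega_0,\Omega_0)=T(\Omega_0)$ is immediate from~\eqref{tordef3}, and $\int_{\sn}\theta\,d\mu^{tor}(\Omega_0,\theta)=0$ follows, exactly as you indicate, by applying Lemma~\ref{argu2} with $\Omega_1=\Omega_0+x$ and comparing with $\frac{d}{dt}(1+t)^{n+2}T(\Omega_0)\big|_{t=0}=(n+2)T(\Omega_0)$ via Lemma~\ref{T1}(i)--(ii). No gaps.
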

\section{Two related extreme problems for $p$-torsional rigidity}
\label{Sec3}
In this section, inspired by \cite{Z20}, we study two related extreme problems for the existence of the $L_{p}$ Minkowski problem for torsional rigidity.

Let $1<p<\infty$. Suppose $\mu$ is a finite Borel measure on $\sn$ which is not concentrated on any closed hemisphere. For a compact convex set $Q$ containing the origin, set the functional
\[
F_{p}(Q)=\frac{1}{n+2}\int_{\sn}h(Q,\cdot)^{p}d\mu.
\]

Two problems are showed in the followings.

{\bf Problem 1.} Among all convex bodies $Q$ in $\rnnn$ containing the origin, seek one to deal with the constrained minimization problem
\[
\inf_{Q}F_{p}(Q) \quad \quad subject \ to \quad T(Q)\geq 1.
\]

{\bf Problem 2.} Among all convex bodies $Q$ in $\rnnn$ containing the origin, seek one to deal with the following constrained maximum problem
\[
\sup_{Q}T(Q) \quad \quad subject \ to \quad F_{p}(Q)\leq 1.
\]
Here we give some remarks. Let $Q$ be a convex body in $\rnnn$ containing the origin. From \cite[Lemma 2.3]{Z20}, one see
\begin{equation}\label{}
0<F_{p}(Q)<\infty.
\end{equation}

By the homogeneity of $F_{p}$ and $T$, one see $F_{p}(mQ)=m^{p}F_{p}(Q)$ and $T(mQ)=m^{n+2}T(Q)$ for $m>0$, it suffices to conclude that, if $Q$ solves Problem 1, then $T(Q)=1$. Similarly, if $Q$ solves Problem 2, then $F_{p}(Q)=1$.

The following lemma shows the relations between Problems 1 and 2.
\begin{lem}\label{re12}
For Problems 1 and 2, the following assertions hold.
\begin{itemize}

    \item[(1)] If convex body $\bar{\Omega}$ solves Problem 1, then
    \[
    \Omega:=F_{p}(\bar{\Omega})^{-1/p}\bar{\Omega}
    \]
    solves Problem 2.
    \item[(2)] If convex body $\Omega$ solves Problem 2, then
    \[
 \bar{\Omega}:=T(\Omega)^{-1/(n+2)}\Omega
    \]
solves Problem 1.
    \end{itemize}
\end{lem}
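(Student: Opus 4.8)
The plan is to exploit the homogeneity scalings of $F_p$ and $T$ to pass back and forth between the two constrained problems, using the remarks already recorded just before the lemma: namely $F_p(mQ)=m^p F_p(Q)$, $T(mQ)=m^{n+2}T(Q)$, that a solution of Problem 1 necessarily has $T=1$, and that a solution of Problem 2 necessarily has $F_p=1$.

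\medskip

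For part (1), suppose $\bar\Omega$ solves Problem 1, so $T(\bar\Omega)=1$ and $F_p(\bar\Omega)\le F_p(Q)$ for every admissible $Q$ with $T(Q)\ge 1$. Set $c=F_p(\bar\Omega)^{-1/p}$ and $\Omega=c\bar\Omega$. Then $F_p(\Omega)=c^p F_p(\bar\Omega)=1$, so $\Omega$ is admissible for Problem 2, and $T(\Omega)=c^{n+2}T(\bar\Omega)=F_p(\bar\Omega)^{-(n+2)/p}$. Now I would argue by contradiction: if some convex body $\Omega'$ containing the origin had $F_p(\Omega')\le 1$ but $T(\Omega')>T(\Omega)$, then rescale $\Omega'$ to $\Omega''=T(\Omega')^{-1/(n+2)}\Omega'$ so that $T(\Omega'')=1$, making $\Omega''$ admissible for Problem 1; compute $F_p(\Omega'')=T(\Omega')^{-p/(n+2)}F_p(\Omega')\le T(\Omega')^{-p/(n+2)}<T(\Omega)^{-p/(n+2)}=F_p(\bar\Omega)$, contradicting the minimality of $\bar\Omega$ in Problem 1. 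Hence $\Omega$ solves Problem 2.

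\medskip

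Part (2) is the symmetric argument. If $\Omega$ solves Problem 2 then $F_p(\Omega)=1$ and $T(\Omega)\ge T(Q)$ for all admissible $Q$. Put $\bar\Omega=T(\Omega)^{-1/(n+2)}\Omega$, so $T(\bar\Omega)=1$ (admissible for Problem 1) and $F_p(\bar\Omega)=T(\Omega)^{-p/(n+2)}$. If some $\bar\Omega'$ with $T(\bar\Omega')\ge 1$ had $F_p(\bar\Omega')<F_p(\bar\Omega)$, rescale to $\tilde\Omega=F_p(\bar\Omega')^{-1/p}\bar\Omega'$ so $F_p(\tilde\Omega)=1$; then $T(\tilde\Omega)=F_p(\bar\Omega')^{-(n+2)/p}T(\bar\Omega')\ge F_p(\bar\Omega')^{-(n+2)/p}>F_p(\bar\Omega)^{-(n+2)/p}=T(\Omega)$, contradicting the maximality of $\Omega$. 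One should also note that whenever a convex body $Q$ containing the origin is nondegenerate, $T(Q)>0$ and $0<F_p(Q)<\infty$ by Lemma~\ref{T1} and the cited $\cite[\text{Lemma 2.3}]{Z20}$, so all these rescalings are by strictly positive finite constants and produce genuine convex bodies containing the origin.

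\medskip

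I do not expect a serious obstacle here: the content is entirely the bookkeeping of the two homogeneities together with the normalization remarks, and the only points requiring a little care are (i) checking that the rescaled bodies remain admissible competitors (they still contain the origin and satisfy the respective constraints with equality), and (ii) making sure the inequalities in the contradiction steps are strict in the right places, which they are because $p>0$ and $n+2>0$ make $t\mapsto t^{-p/(n+2)}$ strictly decreasing. The slightly delicate issue, if any, is confirming that a minimizer of Problem 1 indeed has $T=1$ rather than $T>1$ — but this is exactly the remark preceding the lemma, obtained by shrinking any body with $T>1$ to decrease $F_p$ while keeping $T\ge 1$, so it may be worth spelling that out explicitly at the start of the proof before invoking it.
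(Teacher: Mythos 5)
Your proof is correct and takes essentially the same approach as the paper: both rest on the homogeneity relations $F_p(mQ)=m^pF_p(Q)$, $T(mQ)=m^{n+2}T(Q)$ together with the normalization remarks $T(\bar\Omega)=1$ and $F_p(\Omega)=1$, and both compare the solution with a rescaled competitor from the other problem. The only cosmetic difference is that you phrase the comparison as a contradiction while the paper writes out the same inequality chain directly.
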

\begin{proof}
For (1). Assume $\bar{\Omega}$ solves Problem 1. Let $Q$ be a convex body containing the origin such that $F_{p}(Q)\leq1$. With the aid of the positive homogeneity of $T$, $T(\bar{\Omega})=1$, $T(T(Q)^{-1/(n+2)}Q)=1$, the positive homogeneity of $F_{p}$, and $F_{p}(Q)\leq1$, we have
\begin{equation}
\begin{split}
\label{Up3}
T(\Omega)^{p/(n+2)}&=F_{p}(\bar{\Omega})^{-1}\\
&\geq F_{p}(T(Q)^{-1/(n+2)}Q)^{-1}\\
&=F_{p}(Q)^{-1}T(Q)^{p/(n+2)}\\
&\geq T(Q)^{p/(n+2)}.
\end{split}
\end{equation}
Thus $K$ solves Problem 2.

For (2). Assume $\Omega$ solves Problem 2. Let $Q$ be convex body containing the origin such that $T(Q)\geq 1$. By the positive homogeneity of $F_{p}$, $F_{p}(\Omega)=1$, $F_{p}(F_{p}(Q)^{-1/p}Q)=1$, the positive homogeneity of $T$, and $T(Q)\geq 1$, we obtain
\begin{equation}
\begin{split}
\label{Up3}
F_{p}(\bar{\Omega})&=T(\Omega)^{-p/(n+2)}\\
&\leq T(F_{p}(Q)^{-1/p}Q)^{-p/(n+2)}\\
&=F_{p}(Q)T(Q)^{-p/(n+2)}\\
&\leq F_{P}(Q).
\end{split}
\end{equation}
Thus, $\bar{\Omega}$ solves Problem 1.
\end{proof}
The following is the normalized $L_{p}$ Minkowski problem for torsional rigidity.

{\bf Problem 3.} Among all convex bodies in $\rnnn$ containing the origin, find a convex body $\Omega$ such that
\[
\frac{d\mu^{tor}(\Omega,\cdot)}{T(\Omega)}=h(\Omega,\cdot)^{p-1}d\mu.
\]
The following lemma digs up relations between Problems 2 and 3.
\begin{lem}\label{LP23}
Suppose $\mu$ is a finite Borel measure on $\sn$ and is not concentrated on any closed hemisphere. Let $1<p<\infty$, and $\Omega$ be a convex body in $\rnnn$ containing the origin. Then the following assertions hold.
\begin{itemize}

    \item [(1)] If $\Omega$ contains the origin in its interior and solves Problem 2 for $(\mu,p)$, then it solves Problem 3 for $(\mu,p)$.
    \item [(2)] If $\Omega$ solves Problem 3 for $(\mu, p)$, then it solves Problem 2 for $(\mu,p)$.
    \end{itemize}
\end{lem}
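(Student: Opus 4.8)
The plan is to establish the two implications via the standard correspondence between the extremal problem (Problem~2) and its Euler--Lagrange equation (Problem~3): implication~(2) will use only the Minkowski inequality for torsional rigidity (Lemma~\ref{M}) together with H\"older's inequality, while implication~(1) is a Wulff-shape variational argument in which the assumption that the origin lies in the interior of $\Omega$ is essential. For~(2), suppose $\Omega$ solves Problem~3, i.e. $d\mu^{tor}(\Omega,\cdot)=T(\Omega)\,h(\Omega,\cdot)^{p-1}d\mu$. First I would integrate this identity against $h(\Omega,\cdot)$: by Lemma~\ref{argu2} with $\Omega_1=\Omega$ and the homogeneity of $T$ (Lemma~\ref{T1}(i)) the left side equals $\frac{d}{dt}T((1+t)\Omega)\big|_{t=0}=(n+2)T(\Omega)$, while the right side integrates to $(n+2)T(\Omega)F_p(\Omega)$, forcing $F_p(\Omega)=1$. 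Then for any convex body $Q$ containing the origin with $F_p(Q)\le 1$, using Problem~3, H\"older's inequality with exponents $p$ and $p/(p-1)$, and the identities $\int_{\sn}h(Q,\cdot)^pd\mu=(n+2)F_p(Q)$ and $\int_{\sn}h(\Omega,\cdot)^pd\mu=n+2$,
\[
(n+2)\,T(\Omega,Q)=T(\Omega)\!\int_{\sn}\!h(Q,\cdot)\,h(\Omega,\cdot)^{p-1}d\mu\le(n+2)\,T(\Omega)\,F_p(Q)^{1/p},
\]
hence $T(\Omega,Q)\le T(\Omega)$. Combining with the Minkowski inequality $T(\Omega,Q)^{n+2}\ge T(\Omega)^{n+1}T(Q)$ yields $T(Q)\le T(\Omega)$, which together with $F_p(\Omega)=1$ is precisely the statement that $\Omega$ solves Problem~2.

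For~(1), assume $\Omega$ has the origin in its interior and solves Problem~2, so $h(\Omega,\cdot)\ge c>0$ on $\sn$ and $F_p(\Omega)=1$. Fix $f\in C(\sn)$; for $|t|$ small set $h_t=(h(\Omega,\cdot)^p+tf)^{1/p}\in C^{+}(\sn)$ and $\Omega_t=[h_t]$, a convex body with the origin in its interior. Since $h_{\Omega_t}\le h_t$, one has $F_p(\Omega_t)\le\frac{1}{n+2}\int_{\sn}h_t^pd\mu=1+tc$ with $c=\frac{1}{n+2}\int_{\sn}f\,d\mu$, so the rescaling $\widehat\Omega_t=(1+tc)^{-1/p}\Omega_t$ satisfies $F_p(\widehat\Omega_t)\le1$; optimality in Problem~2 then gives $T(\widehat\Omega_t)\le T(\Omega)$, i.e.
\[
T(\Omega_t)\le(1+tc)^{(n+2)/p}\,T(\Omega)\quad\text{for all small }t,\ \text{with equality at }t=0.
\]
I would then invoke the Wulff-shape form of the $L_p$ Hadamard variational formula for torsional rigidity, $\frac{d}{dt}T(\Omega_t)\big|_{t=0}=\frac{1}{p}\int_{\sn}f\,h(\Omega,\cdot)^{1-p}d\mu^{tor}(\Omega,\cdot)$, and compare it with the derivative $\frac{T(\Omega)}{p}\int_{\sn}f\,d\mu$ of the majorant at $t=0$: since the two sides of the last display are differentiable at $0$, agree there, and one dominates the other, their derivatives must coincide, so letting $f$ range over $C(\sn)$ yields $h(\Omega,\cdot)^{1-p}d\mu^{tor}(\Omega,\cdot)=T(\Omega)\,d\mu$. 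Multiplying by $h(\Omega,\cdot)^{p-1}$ gives $d\mu^{tor}(\Omega,\cdot)/T(\Omega)=h(\Omega,\cdot)^{p-1}d\mu$, i.e. $\Omega$ solves Problem~3.

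The step I expect to be the main obstacle is the variational formula used in~(1): the version recorded in the Introduction is only for $L_p$ Minkowski combinations of two convex bodies, so it must be upgraded to arbitrary perturbations $h_t=(h(\Omega,\cdot)^p+tf)^{1/p}$ with $f\in C(\sn)$. This is done by the classical Aleksandrov argument --- sandwiching $\Omega_t$ between Minkowski combinations of $\Omega$ with fixed convex bodies and passing to the limit via the monotonicity and Hausdorff continuity of $T$ (Lemma~\ref{T1}) and the weak continuity of the torsion measure (Lemma~\ref{meau1}(d)) --- and it is precisely the point where $h(\Omega,\cdot)\ge c>0$ is needed, to keep $h_t$ a genuine positive function dominating its own Wulff support function for small $|t|$; without the interior assumption $h_t$ need not even be real-valued. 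A routine check along the way is that $\Omega_t$ and $\widehat\Omega_t$ are convex bodies containing the origin, which is immediate because $h_t$ is bounded below by a positive constant for $|t|$ small.
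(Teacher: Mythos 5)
Your proof is correct and reaches the same conclusions, but by a noticeably different route in part~(1). There you avoid the Lagrange-multiplier formulation that the paper uses --- it perturbs $h(\Omega,\cdot)$ linearly to $h+tf$ and asserts the existence of a multiplier $b'$ equating $\frac{d}{dt}T(\Omega_t)|_{t=0}$ with $b'\frac{d}{dt}F_p|_{t=0}$ --- and instead rescale $\Omega_t=[(h^p+tf)^{1/p}]$ by a factor $(1+tc)^{-1/p}$ so that the rescaled body stays admissible, then extract the Euler--Lagrange identity from the one-sided tangency at $t=0$. That is cleaner: the paper's phrasing (choosing $t_0$ so that $F_p(h+tf)=1$ for all $|t|<t_0$, which cannot hold for general $f$) is not rigorous as written, whereas your rescaling trick requires no hidden normalization of the perturbation. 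The price is that you need the Wulff-shape variational formula in the $L_p$ form; as you note, this is an upgrade of Lemma~\ref{argu2}, and it can equivalently be deduced from the linear Wulff formula by the substitution $g=\tfrac1p f\,h(\Omega,\cdot)^{1-p}$ since $(h^p+tf)^{1/p}=h+\tfrac{t}{p}fh^{1-p}+o(t)$ uniformly on $\sn$ when $h\ge c>0$. For part~(2), your argument is essentially the paper's: where you apply H\"older's inequality with exponents $p,\,p/(p-1)$ against the measure $d\mu$, the paper applies Jensen's inequality with the probability measure $\tfrac{h(\Omega,\cdot)}{(n+2)T(\Omega)}d\mu^{tor}(\Omega,\cdot)$; these are the same estimate in different clothing. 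One small bookkeeping remark: the paper explicitly splits $\sn$ into $\{h(\Omega,\cdot)>0\}$ and $\{h(\Omega,\cdot)=0\}$ since $\Omega$ need not contain the origin in its interior. Your write-up glosses over this, but it causes no gap because the Problem~3 identity $d\mu^{tor}(\Omega,\cdot)=T(\Omega)h(\Omega,\cdot)^{p-1}d\mu$ (with $p>1$) forces $\mu^{tor}(\Omega,\cdot)$ to vanish on $\{h(\Omega,\cdot)=0\}$, so the mixed torsion integral already lives on $\{h(\Omega,\cdot)>0\}$. It would be worth saying so explicitly.
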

\begin{proof}
Let $\Omega$ be a convex body containing the origin in its interior. Assume $\Omega$ is the solution to Problem 2. We shall prove that it also attack Problem 3. Now, taking a  $f\in C(\sn)$, let $t\in (-t_{0},t_{0})$,
\[
\Omega_{t}=[h(\Omega,\cdot)+tf] \quad and \quad F_{p}(h(\Omega,\cdot)+tf)=\frac{1}{n+2}\int_{\sn}(h(\Omega,\cdot)+tf)^{p}d\mu,
\]
where $t_{0}>0$ is chosen so that $h(\Omega,\cdot)+tf>0$ and $F_{p}(h(\Omega,\cdot)+tf)=1$. Since $\Omega$ solves Problem 2, this implies that there exists a constant $b^{'}$ such that
\[
\frac{d T(\Omega_{t})}{dt}\Big|_{t=0}=b^{'}\frac{d F_{p}(h(\Omega,\cdot)+tf)}{dt}\Big|_{t=0}.
\]
Then
\[
\int_{\sn}f(\theta) d\mu^{tor}(\Omega,\theta)=b^{'}\frac{p}{n+2}\int_{\sn}f(\theta)h(\Omega,\theta)^{p-1}d\mu.
\]
Since $f$ is arbitrary, this allows us to conclude that $\mu^{tor}(\Omega,\cdot)=b^{'}\frac{p}{n+2}h(\Omega,\cdot)^{p-1}\mu$. Due to $F_{p}(\Omega)=1$, it yields $b^{'}=\frac{n+2}{p}T(\Omega)$. Thus $T(\Omega)^{-1}d\mu^{tor}(\Omega,\cdot)=h(\Omega,\cdot)^{p-1}d\mu$. So, $\Omega$ solves Problem 3.

For proving (2). Assume $\Omega$ solves Problem 3. Let $Q$ be a convex body containing the origin, such that $1=\frac{1}{n+2}\int_{\sn}h(Q,\cdot)^{p}d\mu$. We need to prove $T(\Omega)\geq T(Q)$.

Due to $T(\Omega)h(\Omega,\cdot)^{p-1}d\mu=d\mu^{tor}(\Omega,\cdot)$, then
\begin{equation}
\begin{split}
\label{Up3}
1&=\frac{1}{n+2}\int_{\{h(Q,\cdot)>0\}}h(Q,\cdot)^{p}d\mu+\frac{1}{n+2}\int_{\{h(Q,\cdot)=0\}}h(Q,\cdot)^{p}d\mu\\
&\geq \frac{1}{n+2}\int_{\{h(Q,\cdot)>0\}}h(Q,\cdot)^{p}d\mu\\
&=\frac{1}{n+2}\int_{\{h(Q,\cdot)>0\}}\left( \frac{h(Q,\cdot)}{h(\Omega,\cdot)} \right)^{p}\frac{h(\Omega,\cdot)}{T(\Omega)}d\mu^{tor}(\Omega,\cdot).
\end{split}
\end{equation}
Since the measure $\frac{1}{(n+2)T(\Omega)}h(\Omega,\cdot)d\mu^{tor}(\Omega,\cdot)$ is a Borel probability measure on $\{h(\Omega,\cdot)\neq 0\}$. Then, by the Jensen inequality, the definition of mixed torsional rigidity, and the Minkowski inequality for torsional rigidity, we have
\begin{equation}
\begin{split}
\label{Up3}
1&\geq \left( \frac{1}{(n+2)T(\Omega)} \int_{\{ h(\Omega,\cdot)>0\}}\left( \frac{h(Q,\cdot)}{h(\Omega,\cdot)} \right)^{p}h(\Omega,\cdot)d\mu^{tor}(\Omega,\cdot)\right)^{1/p}\\
&\geq \frac{1}{(n+2)T(\Omega)} \int_{\{ h(\Omega,\cdot)>0\}}\frac{h(Q,\cdot)}{h(\Omega,\cdot)}h(\Omega,\cdot)d\mu^{tor}(\Omega,\cdot)\\
&=\frac{1}{(n+2)T(\Omega)}\int_{\{ h(\Omega,\cdot)>0\}}h(Q,\cdot)d\mu^{tor}(\Omega,\cdot)\\
&=\frac{T(\Omega,Q)}{T(\Omega)}-\frac{1}{(n+2)T(\Omega)}\int_{\{ h(\Omega,\cdot)=0\}}h(Q,\cdot)d\mu^{tor}(\Omega,\cdot)\\
&\geq \left(\frac{T(Q)}{T(\Omega)} \right)^{\frac{1}{n+2}}-\frac{1}{(n+2)T(\Omega)}\int_{\{ h(\Omega,\cdot)=0\}}h(Q,\cdot)d\mu^{tor}(\Omega,\cdot).
\end{split}
\end{equation}
Since $T(\Omega)h(\Omega,\cdot)^{p-1}d\mu=d \mu^{tor}(\Omega,\cdot)$, one see that
\[
\int_{\{ h(\Omega,\cdot)=0\}}h(Q,\cdot)d\mu^{tor}(\Omega,\cdot)=T(\Omega)\int_{\{ h(\Omega,\cdot)=0\}}h(Q,\cdot)h(\Omega,\cdot)^{p-1}d\mu=0.
\]
So,
\[
1\geq \left(\frac{T(Q)}{T(\Omega)} \right)^{\frac{1}{n+2}}.
\]
We conclude that $T(\Omega)\geq T(Q)$.
\end{proof}
The following lemma tells us that the uniqueness of solution to Problem 2. Equivalently, the uniqueness of solution to problem 1 is revealed.

\begin{lem}\label{LP33}
Suppose $\mu$ is  a finite Borel measure on $\sn$ and is not concentrated on any closed hemisphere. Let $1<p<\infty$. If $\Omega_{1}$ and $\Omega_{2}$ are convex bodies in $\rnnn$ containing the origin and solving Problem 2 for $(\mu,p)$, then $\Omega_{1}=\Omega_{2}$.
\end{lem}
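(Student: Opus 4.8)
The plan is to leverage the maximality of $\Omega_1$ and $\Omega_2$ for Problem~2 together with the Minkowski inequality for torsional rigidity (Lemma~\ref{M}) and its equality case. From the remark preceding Lemma~\ref{re12} a solution of Problem~2 satisfies $F_p=1$, so $\int_{\sn}h(\Omega_j,\cdot)^p\,d\mu=n+2$ for $j=1,2$; moreover both bodies attain the supremum in Problem~2, hence $T(\Omega_1)=T(\Omega_2)=:T^{\ast}$ with $0<T^{\ast}<\infty$. I must show $\Omega_1=\Omega_2$.

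The crux is the bound $T(\Omega_1,\Omega_2)\le T^{\ast}$, where $T(\Omega_1,\Omega_2)=\frac1{n+2}\int_{\sn}h(\Omega_2,\cdot)\,d\mu^{tor}(\Omega_1,\cdot)$ is the first mixed torsional rigidity. To get it, set $c_t=F_p(\Omega_1+t\Omega_2)^{1/p}$ for $t\ge 0$ (so $c_0=1$); then $c_t^{-1}(\Omega_1+t\Omega_2)$ is a convex body containing the origin with $F_p=1$, hence admissible for Problem~2, so $T(\Omega_1+t\Omega_2)\le c_t^{\,n+2}T^{\ast}$, with equality at $t=0$. Differentiating this inequality at $t=0^{+}$ — the left side being differentiable by Lemma~\ref{argu2} with derivative $\int_{\sn}h(\Omega_2,\cdot)\,d\mu^{tor}(\Omega_1,\cdot)=(n+2)T(\Omega_1,\Omega_2)$, and the right side giving $T^{\ast}\int_{\sn}h(\Omega_1,\cdot)^{p-1}h(\Omega_2,\cdot)\,d\mu$ after a short computation from $c_t^p=\frac1{n+2}\int_{\sn}(h(\Omega_1,\cdot)+t\,h(\Omega_2,\cdot))^p\,d\mu$ — yields $(n+2)T(\Omega_1,\Omega_2)\le T^{\ast}\int_{\sn}h(\Omega_1,\cdot)^{p-1}h(\Omega_2,\cdot)\,d\mu$. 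Hölder's inequality with exponents $\tfrac{p}{p-1}$ and $p$, together with the normalization $\int h(\Omega_j,\cdot)^p\,d\mu=n+2$, bounds the last integral by $n+2$, which gives the claim. (If one knows the origin is interior to $\Omega_1,\Omega_2$, Lemma~\ref{LP23}(1) lets one rewrite $T(\Omega_1,\Omega_2)=\tfrac{T(\Omega_1)}{n+2}\int_{\sn}h(\Omega_2,\cdot)h(\Omega_1,\cdot)^{p-1}\,d\mu$ and apply Hölder directly, avoiding the differentiation.)

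Combining the crux with the Minkowski inequality $T(\Omega_1,\Omega_2)^{n+2}\ge T(\Omega_1)^{n+1}T(\Omega_2)=(T^{\ast})^{n+2}$ gives $T(\Omega_1,\Omega_2)=T^{\ast}$, forcing equality in both the Minkowski inequality and the Hölder inequality above. Equality in Lemma~\ref{M} makes $\Omega_1$ and $\Omega_2$ homothetic, say $\Omega_2=\lambda\Omega_1+x_0$; since $T$ is homogeneous of degree $n+2$ and translation invariant (Lemma~\ref{T1}) and $T(\Omega_1)=T(\Omega_2)$, we get $\lambda=1$, i.e.\ $\Omega_2=\Omega_1+x_0$. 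Equality in Hölder forces $h(\Omega_1,\cdot)^p=\gamma\,h(\Omega_2,\cdot)^p$ $\mu$-a.e.\ for a constant $\gamma>0$; integrating against $\mu$ and using $\int h(\Omega_j,\cdot)^p\,d\mu=n+2$ gives $\gamma=1$, so $h(\Omega_1,\cdot)=h(\Omega_2,\cdot)$ $\mu$-a.e., i.e.\ $x_0\cdot\theta=0$ for $\mu$-a.e.\ $\theta$. As $\{\theta\in\sn:x_0\cdot\theta=0\}$ lies in a closed hemisphere, the hypothesis that $\mu$ is not concentrated on any closed hemisphere forces $x_0=0$, hence $\Omega_1=\Omega_2$.

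I expect the delicate point to be the passage from $T(\Omega_1+t\Omega_2)\le c_t^{\,n+2}T^{\ast}$ to the corresponding inequality between derivatives at $t=0$: one needs the one-sided differentiability of $t\mapsto T(\Omega_1+t\Omega_2)$ at $0$ (supplied exactly by Lemma~\ref{argu2}) and must track the normalizing factor $c_t$ so that the Hölder bound lands precisely at the threshold $n+2$. Once $T(\Omega_1,\Omega_2)=T^{\ast}$ is established, the rest is dictated by the equality conditions of the Minkowski and Hölder inequalities.
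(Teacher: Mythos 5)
Your proof is correct, but it takes a genuinely different route from the paper's. The paper's argument is a ``midpoint'' argument: it considers $\tfrac{1}{2}(\Omega_1+\Omega_2)$, notes that this body is admissible for Problem~2 since $F_p\bigl(\tfrac{\Omega_1+\Omega_2}{2}\bigr)\le\tfrac{1}{2}\bigl(F_p(\Omega_1)+F_p(\Omega_2)\bigr)\le1$, and then invokes maximality to show that the Brunn--Minkowski inequality for torsional rigidity (Lemma~\ref{BM}) must hold with equality, which yields $\Omega_1=\alpha\Omega_2+x$; after $\alpha=1$ from $T(\Omega_1)=T(\Omega_2)$, the translation $x$ is killed by observing that $F_p$ of the midpoint must also equal~$1$, which, by strict convexity of $t\mapsto t^p$, forces $x\cdot\xi=0$ on $\mathrm{supp}\,\mu$ and hence $x=0$. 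You instead use the \emph{Minkowski} inequality (Lemma~\ref{M}) rather than the Brunn--Minkowski one: a one-sided derivative of the constraint-normalized perturbation $t\mapsto T(\Omega_1+t\Omega_2)$ at $t=0^+$ (via Lemma~\ref{argu2}) together with H\"older gives $T(\Omega_1,\Omega_2)\le T^{\ast}$, while Lemma~\ref{M} gives the reverse; equality in Lemma~\ref{M} supplies the homothety and equality in H\"older supplies $h(\Omega_1,\cdot)=h(\Omega_2,\cdot)$ $\mu$-a.e., whence the translation vanishes. The paper's proof is shorter and needs only Lemma~\ref{BM} plus elementary convexity of $t^p$, without invoking the Hadamard variational formula or mixed torsional rigidity at all; your argument is the more ``variational'' one (closer to a Lagrange-multiplier derivation) and has the merit of exposing explicitly that the two extremal conditions already pin down the first mixed torsional rigidity, at the cost of requiring the one-sided differentiability from Lemma~\ref{argu2}, the mixed functional $T(\cdot,\cdot)$, and H\"older with its equality case. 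Both are complete; the choice is a matter of taste.
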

\begin{proof}
Consider the convex body $2^{-1}(\Omega_{1}+\Omega_{2})$. It also contains the origin. Since
\[
F_{p}\left(\frac{\Omega_{1}+\Omega_{2}}{2}\right)\leq \frac{F_{p}(\Omega_{1})+F_{p}(\Omega_{2})}{2}\leq 1,
\]
then the convex body $2^{-1}(\Omega_{1}+\Omega_{2})$ still satisfies the constraint conditions in Problem 2. This fact illustrates
\[
T\left(\frac{\Omega_{1}+\Omega_{2}}{2}\right)\leq T(\Omega_{1})=T(\Omega_{2}).
\]
As a result,
\begin{equation}\label{TI}
T\left(\frac{\Omega_{1}+\Omega_{2}}{2} \right)^{1/(n+2)}\leq \frac{1}{2}T(\Omega_{1})^{1/(n+2)}+\frac{1}{2}T(\Omega_{2})^{1/(n+2)}.
\end{equation}
On the other hand, the reverse of $\eqref{TI}$ holds, so equality occurs in \eqref{TI},  by lemma \ref{BM}, it yields $\Omega_{1}=\alpha \Omega_{2}+x$, for some $\alpha>0$ and $x\in \rnnn$. By $T(\Omega_{1})=T(\Omega_{2})$ together with the homogeneity and translation invariance of $T$, we get $\alpha=1$. Then $\Omega_{1}=\Omega_{2}+x$. Next, we claim $x=o$.

Since
\[
T\left( \frac{\Omega_{1}+\Omega_{2}}{2}\right)=T(\Omega_{1})=T(\Omega_{2}),
\]
this implies that $2^{-1}(\Omega_{1}+\Omega_{2})$ is also a solution to problem 2. So $F_{p}(2^{-1}(\Omega_{1}+\Omega_{2}))=1$, then
\[
\int_{\sn}\left( \frac{h(\Omega_{1},\cdot)+h(\Omega_{2},\cdot)}{2}\right)^{p}d\mu=\int_{\sn}\frac{h(\Omega_{1},\cdot)^{p}+h(\Omega_{2},\cdot)^{p}}{2}d\mu.
\]
In view of the fact that
\[
\left(\frac{h(\Omega_{1},\cdot)+h(\Omega_{2},\cdot)}{2}\right)^{p}\leq\frac{h(\Omega_{1},\cdot)^{p}+h(\Omega_{2},\cdot)^{p}}{2}.
\]
Thus, for all $\xi\in supp \mu$, there is
\begin{equation}\label{hp}
\frac{h(\Omega_{2},\xi)+(h(\Omega_{2},\xi)+(\xi\cdot x))}{2}=\left(\frac{h(\Omega_{2},\xi)^{p}+(h(\Omega_{2},\xi)+(\xi\cdot x))^{p}}{p}\right)^{1/p}.
\end{equation}
Let $\mathcal{O}=\{\xi \in \sn : x\cdot \xi>0\}$. Since $\mu$ is not concentrated on any closed hemisphere, it yields that $\mu(\mathcal{O}\cap supp \mu)>0$. If $x$ is nonzero, then for any $\xi\in \mathcal{O} \cap supp \mu$,
\begin{equation}\label{hp2}
\frac{h(\Omega_{2},\xi)+(h(\Omega_{2},\xi)+(\xi\cdot x))}{2}<\left(\frac{h(\Omega_{2},\xi)^{p}+(h(\Omega_{2},\xi)+(\xi\cdot x))^{p}}{p}\right)^{1/p},
\end{equation}
which contradicts \eqref{hp}. Hence, we verify $x=o$, which tell us  $\Omega_{1}=\Omega_{2}$.

\end{proof}

The original $L_{p}$ Minkowski problem for torsional rigidity is given as follows.

{\bf Problem 4.} Among all convex bodies in $\rnnn$ that contain the origin, find a convex body $\Omega$ such that
\[
d \mu^{tor}(\Omega,\cdot)=h(\Omega,\cdot)^{p-1}d\mu.
\]

The following lemma shows the equivalence between Problems 4 and 3.
\begin{lem}\label{r34}
Suppose $\Omega$ is a convex body in $\rnnn$ containing the origin and that $p\neq n+2$. Then the following conclusions hold.
\end{lem}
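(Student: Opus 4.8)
The two conclusions I expect here are that Problems~3 and~4 are equivalent up to an explicit dilation: (1) if $\Omega$ solves Problem~3 for $(\mu,p)$, then $T(\Omega)^{-1/(n+2-p)}\Omega$ solves Problem~4 for $(\mu,p)$; and (2) if $\Omega$ solves Problem~4 for $(\mu,p)$, then $T(\Omega)^{-1/p}\Omega$ (equivalently $F_p(\Omega)^{-1/p}\Omega$) solves Problem~3 for $(\mu,p)$. The plan is to run a scaling computation built on three homogeneities: $T(m\Omega)=m^{n+2}T(\Omega)$ (Lemma~\ref{T1}(i)), $\mu^{tor}(m\Omega,\cdot)=m^{n+1}\mu^{tor}(\Omega,\cdot)$ (Lemma~\ref{meau1}(a)), and $h(m\Omega,\cdot)=m\,h(\Omega,\cdot)$, together with the fact that $0<T(\Omega)<\infty$ and $0<F_p(\Omega)<\infty$ for every convex body $\Omega$ containing the origin.

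For (1), given $d\mu^{tor}(\Omega,\cdot)=T(\Omega)h(\Omega,\cdot)^{p-1}d\mu$, I would set $\lambda:=T(\Omega)^{-1/(n+2-p)}$, which is a well-defined positive number precisely because $p\neq n+2$ and $T(\Omega)>0$. Pushing the scalar $\lambda$ through the three homogeneities gives
\[
d\mu^{tor}(\lambda\Omega,\cdot)=\lambda^{n+1}T(\Omega)h(\Omega,\cdot)^{p-1}d\mu=\lambda^{n+2-p}T(\Omega)\,h(\lambda\Omega,\cdot)^{p-1}d\mu=h(\lambda\Omega,\cdot)^{p-1}d\mu,
\]
the last equality since $\lambda^{n+2-p}T(\Omega)=1$ by the choice of $\lambda$; hence $\lambda\Omega$ solves Problem~4.

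For (2), given $d\mu^{tor}(\Omega,\cdot)=h(\Omega,\cdot)^{p-1}d\mu$, I would first record, by integrating $h(\Omega,\cdot)$ against this identity and invoking \eqref{tordef3}, that
\[
T(\Omega)=\frac{1}{n+2}\int_{\sn}h(\Omega,\theta)\,d\mu^{tor}(\Omega,\theta)=\frac{1}{n+2}\int_{\sn}h(\Omega,\theta)^{p}\,d\mu=F_p(\Omega);
\]
this identity is exactly what makes the normalization below self-consistent. Putting $\tau:=T(\Omega)^{-1/p}=F_p(\Omega)^{-1/p}>0$, one has $F_p(\tau\Omega)=\tau^{p}F_p(\Omega)=1$, and the same bookkeeping yields
\[
\frac{d\mu^{tor}(\tau\Omega,\cdot)}{T(\tau\Omega)}=\frac{\tau^{n+1}\mu^{tor}(\Omega,\cdot)}{\tau^{n+2}T(\Omega)}=\frac{h(\Omega,\cdot)^{p-1}d\mu}{\tau\,T(\Omega)}=\frac{h(\tau\Omega,\cdot)^{p-1}d\mu}{\tau^{p}T(\Omega)}=h(\tau\Omega,\cdot)^{p-1}d\mu,
\]
since $\tau^{p}T(\Omega)=1$; hence $\tau\Omega$ solves Problem~3.

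There is no serious obstacle in this argument; the only points that need watching are that $p\neq n+2$ is precisely what is needed for $\lambda$ to be defined, that $T(\Omega)$ and $F_p(\Omega)$ are finite and strictly positive (the latter recalled above from \cite{Z20}), and the forced identity $T(\Omega)=F_p(\Omega)$ along any solution of Problem~4, which is what reconciles the two dilation factors (so that the two maps in (1) and (2) are mutually inverse). Combined with Lemmas~\ref{re12}, \ref{LP23} and~\ref{LP33}, this closes the chain Problem~1~$\Leftrightarrow$~Problem~2~$\Leftrightarrow$~Problem~3~$\Leftrightarrow$~Problem~4 and, in particular, gives uniqueness of the solution to Problem~4 among convex bodies containing the origin in their interiors.
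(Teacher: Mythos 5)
Your proposal is correct and follows essentially the same route as the paper: both parts use the identical dilation factors (noting that $T(\Omega)^{1/(p-2-n)}=T(\Omega)^{-1/(n+2-p)}$) and push through the three homogeneities of $h$, $\mu^{tor}$, and $T$. The one small addition in your write-up --- the observation that $T(\Omega)=F_p(\Omega)$ for a solution of Problem~4, obtained by pairing $h(\Omega,\cdot)$ against the defining identity and invoking \eqref{tordef3} --- is not in the paper's proof and is not needed for the dilation bookkeeping, but it is a correct and tidy way of explaining why the normalization $T(\Omega)^{-1/p}$ is the same as $F_p(\Omega)^{-1/p}$.
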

\begin{itemize}

    \item [(1)] If $\Omega$ solves problem 3, then $\bar{\Omega}=T(\Omega)^{1/(p-2-n)}\Omega$ solves Problem 4.

        \item [(2)] If $\Omega$ solves Problem 4, then $\tilde{\Omega}=T(\Omega)^{-1/p}\Omega$ solves Problem 3.

    \end{itemize}

\begin{proof}
Assume that $\Omega$ solves Problem 3. By the homogeneity of $\mu^{tor}$, and $\frac{d\mu^{tor}(\Omega,\cdot)}{T(\Omega)}=h(\Omega,\cdot)^{p-1}d\mu$, it follows that
\begin{equation}
\begin{split}
\label{Up3}
d\mu^{tor}(\bar{\Omega},\cdot)&=d\mu^{tor}(T(\Omega)^{1/(p-2-n)}\Omega,\cdot)\\
&=T(\Omega)^{(n+1)/(p-2-n)}d\mu^{tor}(\Omega,\cdot)\\
&=T(\Omega)^{(n+1)/(p-2-n)}T(\Omega)h(\Omega,\cdot)^{p-1}d\mu\\
&=T(\Omega)^{(n+1)/(p-2-n)}T(\Omega)h(T(\Omega)^{1/(n+2-p)}\bar{\Omega},\cdot)^{p-1}d\mu\\
&=h(\bar{\Omega},\cdot)^{p-1}d\mu.
\end{split}
\end{equation}
(2) Assume that $\Omega$ solves Problem 4. From $d\mu^{tor}(\Omega,\cdot)=h(\Omega,\cdot)^{p-1}d\mu$, we have
\begin{equation}
\begin{split}
\label{Up3}
\frac{d\mu^{tor}(\tilde{\Omega},\cdot)}{T(\tilde{\Omega})}&=\frac{d\mu^{tor}(T(\Omega)^{-1/p}\Omega,\cdot)}{T(T(\Omega)^{-1/p}\Omega)}\\
&=\frac{T(\Omega)^{-(n+1)/p}d\mu^{tor}(\Omega,\cdot)}{T(\Omega)^{-(n+2)/p}T(\Omega)}\\
&=T(\Omega)^{(1-p)/p}h(\Omega,\cdot)^{p-1}d\mu\\
&=T(\Omega)^{(1-p)/p}h(T(\Omega)^{1/p}\tilde{\Omega})^{p-1}d\mu\\
&=h(\tilde{\Omega},\cdot)^{p-1}d\mu.
\end{split}
\end{equation}
Hence, we complete the proof.
\end{proof}

By means of the similar approximation arguments in \cite{Hg05}, we can obtain the solvability of Problem 3 showed in the followings.
\begin{theo}\label{TM}
Let $\mu$ be a finite Borel measure on $\sn$ which is not concentrated on any closed hemisphere, and $1<p<\infty$. Then there exists a unique convex body $\Omega$ containing the origin, such that $\frac{d\mu^{tor}(\Omega,\cdot)}{T(\Omega)}=h(\Omega,\cdot)^{p-1}d\mu$.
\end{theo}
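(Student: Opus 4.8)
The statement combines uniqueness, which is immediate from the lemmas of this section, and existence, which is obtained by the polytopal approximation scheme of \cite{Hg05} adapted to torsional rigidity. For uniqueness, suppose $\Omega_{1}$ and $\Omega_{2}$ are convex bodies in $\rnnn$ containing the origin, each solving Problem 3 for $(\mu,p)$. By Lemma \ref{LP23}(2) both solve Problem 2 for $(\mu,p)$, whence Lemma \ref{LP33} forces $\Omega_{1}=\Omega_{2}$; thus Problem 3 has at most one solution, and it remains to produce one.

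For existence I would first discretize: choose discrete measures $\mu_{k}=\sum_{j=1}^{N_{k}}a_{j}^{(k)}\delta_{u_{j}^{(k)}}$ with $\mu_{k}\to\mu$ weakly, where $u_{1}^{(k)},\dots,u_{N_{k}}^{(k)}$ are not concentrated on any closed hemisphere. Since $\mu$ is not concentrated on any closed hemisphere, $c_{\mu}:=\inf_{u\in\sn}\int_{\sn}(u\cdot\theta)_{+}^{\,p}\,d\mu(\theta)>0$, and by uniform continuity of $(u,\theta)\mapsto(u\cdot\theta)_{+}^{\,p}$ the same lower bound (with $c_{\mu}/2$) holds for $\mu_{k}$ once $k$ is large. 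For such $k$ I would solve Problem 2 for $(\mu_{k},p)$ inside the class $\mathcal{P}(u_{1}^{(k)},\dots,u_{N_{k}}^{(k)})$ of polytopes containing the origin, i.e.\ maximize $T(P_{h})$ over support vectors $h=(h_{1},\dots,h_{N_{k}})$ with $h_{j}\ge0$ and $F_{p}(P_{h})\le1$ (the functional formed with $\mu_{k}$). Using $h(P_{h},\theta)\ge(v\cdot\theta)_{+}$ for all $\theta$, where $v\in P_{h}\ni o$ realizes $|v|=\max_{x\in P_{h}}|x|$, the constraint $F_{p}(P_{h})\le1$ forces $\mathrm{diam}(P_{h})\le C=C(n,p,c_{\mu})$, while comparison with a fixed small polytope of the form $\{x:x\cdot u_{j}^{(k)}\le r\ \forall j\}$ shows $\sup T\ge\delta>0$ uniformly in $k$; hence a nondegenerate maximizer $P_{k}$ exists, with $F_{p}(P_{k})=1$ by the homogeneity remarks preceding Lemma \ref{re12}.

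The Euler--Lagrange equation for $P_{k}$ then comes from the Karush--Kuhn--Tucker conditions of this finite-dimensional problem: with the polytopal form of the variational formula of Lemma \ref{argu2}, $\partial_{h_{j}}T(P_{h})=\mu^{tor}(P_{h},\{u_{j}^{(k)}\})$, and $\partial_{h_{j}}F_{p}(P_{h})=\frac{p}{n+2}a_{j}^{(k)}h_{j}^{\,p-1}$, one obtains a multiplier $\lambda_{k}\ge0$ with $\mu^{tor}(P_{k},\{u_{j}^{(k)}\})=\lambda_{k}\frac{p}{n+2}a_{j}^{(k)}h(P_{k},u_{j}^{(k)})^{\,p-1}$ for each $j$ (the multipliers of the constraints $h_{j}\ge0$ drop out because $\mu^{tor}(P_{k},\cdot)$ is a nonnegative measure while $p-1>0$; and $\lambda_{k}>0$ because $T(P_{k})>0$). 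Pairing this identity with $h(P_{k},\cdot)$ and using $\int_{\sn}h(P_{k},\cdot)\,d\mu^{tor}(P_{k},\cdot)=(n+2)T(P_{k})$ from \eqref{tordef3} together with $\int_{\sn}h(P_{k},\cdot)^{p}\,d\mu_{k}=(n+2)F_{p}(P_{k})=n+2$ gives $\lambda_{k}=\frac{n+2}{p}T(P_{k})$, hence $T(P_{k})^{-1}\,d\mu^{tor}(P_{k},\cdot)=h(P_{k},\cdot)^{\,p-1}\,d\mu_{k}$. Then I would pass to the limit: the uniform diameter bound and $T(P_{k})\ge\delta$, with the Blaschke selection theorem, give $P_{k}\to\Omega$ along a subsequence in the Hausdorff metric, $\Omega$ a convex body containing the origin; by Lemma \ref{T1}(iii) $T(P_{k})\to T(\Omega)>0$, by Lemma \ref{meau1}(d) $\mu^{tor}(P_{k},\cdot)\to\mu^{tor}(\Omega,\cdot)$ weakly, and since $p-1>0$ and $h(P_{k},\cdot)\to h(\Omega,\cdot)$ uniformly and boundedly, $h(P_{k},\cdot)^{\,p-1}\,d\mu_{k}\to h(\Omega,\cdot)^{\,p-1}\,d\mu$ weakly. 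Taking limits in the last identity yields $T(\Omega)^{-1}\,d\mu^{tor}(\Omega,\cdot)=h(\Omega,\cdot)^{\,p-1}\,d\mu$, so $\Omega$ solves Problem 3.

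The main obstacle is the polytopal step: the existence of the discrete maximizer and, above all, the rigorous derivation of its Euler--Lagrange equation, which rests on the differentiability of $T$ with respect to the support numbers of a polytope and on identifying that derivative with $\mu^{tor}(\cdot,\{u_{j}\})$ — precisely the place where the structure of torsional rigidity enters and where the argument follows \cite{Hg05}. A second, routine but indispensable, point is the weak continuity of the torsion measure under Hausdorff convergence (Lemma \ref{meau1}(d)): without it the final limit would fail, and — as remarked in the introduction — its unavailability is exactly what obstructs the analogous program for the $\mathfrak{p}$-Laplacian.
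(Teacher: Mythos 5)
Your proposal is correct in outline and closely mirrors the paper's strategy: discretize, solve a finite-dimensional polytopal optimization, extract the Euler--Lagrange relation, and pass to the limit using the Blaschke selection theorem together with continuity of $T$ (Lemma \ref{T1}(iii)) and weak continuity of $\mu^{tor}$ (Lemma \ref{meau1}(d)). The uniqueness argument via Lemma \ref{LP23}(2) and Lemma \ref{LP33} is exactly what the paper uses.

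Where you diverge from the paper is in extracting the Euler--Lagrange equation from the discrete maximizer. You attack Problem~2 directly and invoke Karush--Kuhn--Tucker, arguing that the multipliers of the constraints $h_j\ge0$ disappear because $p>1$ forces $\partial_{h_j}F_p=0$ at $h_j=0$ while $\partial_{h_j}T=\mu^{tor}(P,\{u_j\})\ge0$. The paper instead passes through Problem~1, replaces any admissible $Q$ by the Wulff shape polytope $P(h(Q,\xi_1),\dots,h(Q,\xi_m))$, proves in Lemma~\ref{ra} by an explicit competitor $P(y_t)$ that the maximizer must contain the origin in its \emph{interior}, and then applies the Aleksandrov/Wulff-shape variational formula in Lemma~\ref{LP23}(1). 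These two routes are not identical, and the paper's detour is not gratuitous: your KKT step applies the identity $\partial_{h_j}T(P(h))=\mu^{tor}(P(h),\{u_j\})$ at the boundary $h_j=0$, which is precisely where the polytope may degenerate (a facet through $o$ collapses) and where the one-sided differentiability of $T(P(h))$ in $h_j$ is genuinely delicate. The paper's Lemma~\ref{ra} sidesteps this by constructing the competitor $y_t=(t,\dots,t,(h_{k+1}^p-ct^p)^{1/p},\dots)$ and computing the one-sided difference quotient of $T$ directly against the mixed torsional rigidity and the Minkowski inequality, rather than asserting smoothness at the corner. You do flag the differentiability of $T$ in the support numbers as the main obstacle, but you should be explicit that the subtle instance of it is exactly this boundary case, and that resolving it either requires a Lemma~\ref{ra}-type perturbation or an independent proof of one-sided differentiability there; one also needs the torsion measure to charge any facet through the origin (the content of the inequality \eqref{AP4}), which is what ultimately rules $o\in\partial P$ out. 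A further small point: when you invoke Lemma~\ref{argu2} for $\partial_{h_j}T$, note that that lemma is stated for Minkowski-sum perturbations $\Omega+t\Omega_1$; perturbing a single support number is a Wulff-shape perturbation, so the relevant tool is the Aleksandrov-type variational formula used implicitly in Lemma~\ref{LP23}(1), not Lemma~\ref{argu2} verbatim. With those caveats the argument closes, and the limit passage you describe is the same as the paper's proof of Theorem~\ref{TM}.
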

For reader's convenience, we give the proof of Theorem \ref{TM} in the appendix.

\section{Proof of the main theorems}
\label{Sec4}
In this section, we give the proof of the main theorems beginning with doing some preparation. Let $1<p<\infty$ and $p\neq n+2$.

Let $\mu$ be a finite Borel measure on $\sn$ which is not concentrated on any closed hemisphere, and $\Omega$ be a convex body in $\rnnn$ containing the origin. Define the functional $F_{p}(\Omega)$ by
\[
F_{p}(\Omega)=\frac{1}{n+2}\int_{\sn}h(\Omega,\cdot)^{p}d\mu.
\]

\begin{lem}\label{hj}
Let $\mu$ be a finite Borel measure on $\sn$ which is not concentrated on any closed hemisphere and $Q$ be a convex body in $\rnnn$ containing the origin with $F_{p}(Q)\leq 1$. Suppose that convex body $\Omega$ is the solution to Problem 3 for $(\mu,p)$. Then $F_{p}(\Omega)=1$ and $T(\Omega)\geq T(Q)$.

\end{lem}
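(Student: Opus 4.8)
The plan is to read the statement off from Lemma~\ref{LP23}(2) together with the homogeneity observations recorded just before Lemma~\ref{re12}. Since $\Omega$ is a convex body containing the origin which solves Problem~3 for $(\mu,p)$, Lemma~\ref{LP23}(2) applies directly and shows that $\Omega$ solves Problem~2 for $(\mu,p)$; that is, $F_{p}(\Omega)\le 1$ and $T(\Omega)=\sup\{T(Q')\colon Q'\subset\rnnn \text{ a convex body with } o\in Q' \text{ and } F_{p}(Q')\le 1\}$.

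Next I would upgrade $F_{p}(\Omega)\le 1$ to $F_{p}(\Omega)=1$. Recall that $0<F_{p}(\Omega)<\infty$ and that $F_{p}$ and $T$ are positively homogeneous of degrees $p$ and $n+2$ respectively. If we had $F_{p}(\Omega)<1$, put $m:=F_{p}(\Omega)^{-1/p}>1$; then $m\Omega$ is still a convex body containing the origin, $F_{p}(m\Omega)=m^{p}F_{p}(\Omega)=1\le 1$, while $T(m\Omega)=m^{n+2}T(\Omega)>T(\Omega)$, contradicting the maximality of $T(\Omega)$ in Problem~2. Hence $F_{p}(\Omega)=1$. The remaining assertion $T(\Omega)\ge T(Q)$ is then immediate: by hypothesis $Q$ is a convex body containing the origin with $F_{p}(Q)\le 1$, so $Q$ is admissible for Problem~2 and therefore $T(Q)\le T(\Omega)$ by the supremum characterization above.

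I do not anticipate a genuine obstacle here, because the analytic core — passing from the Euler--Lagrange type identity $T(\Omega)h(\Omega,\cdot)^{p-1}d\mu=d\mu^{tor}(\Omega,\cdot)$ characterizing a solution of Problem~3 to extremality in Problem~2 — has already been carried out in the proof of Lemma~\ref{LP23}(2), via Jensen's inequality against the probability measure $\tfrac{1}{(n+2)T(\Omega)}h(\Omega,\cdot)\,d\mu^{tor}(\Omega,\cdot)$ combined with the Minkowski inequality for torsional rigidity (Lemma~\ref{M}). Should one prefer a self-contained argument rather than the citation, one could inline that computation: bound $1\ge\tfrac{1}{n+2}\int_{\sn}h(Q,\cdot)^{p}d\mu$ from below by restricting to $\{h(\Omega,\cdot)>0\}$, rewrite the integral against $d\mu^{tor}(\Omega,\cdot)$, apply Jensen to remove the $p$-th power, recognize the result as $T(\Omega,Q)/T(\Omega)$ up to the vanishing contribution of $\{h(\Omega,\cdot)=0\}$, and finish with $T(\Omega,Q)^{n+2}\ge T(\Omega)^{n+1}T(Q)$ to obtain $T(\Omega)\ge T(Q)$; this, however, merely reproduces Lemma~\ref{LP23}(2) and adds nothing new.
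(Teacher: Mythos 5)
Your proposal is essentially correct in spirit, and the route is genuinely different from the paper's: rather than redoing the Jensen--Minkowski argument, you cite Lemma~\ref{LP23}(2) for the extremality, and you pass to $F_p(\Omega)=1$ by the homogeneity of $F_p$ and $T$. But there is a subtle dependency here that the paper's proof deliberately avoids, and you have glossed over it. You read off both conclusions from the assertion ``$\Omega$ solves Problem 2, hence $F_p(\Omega)\le 1$.'' However, the proof of Lemma~\ref{LP23}(2) as actually given in the paper only establishes the maximality $T(\Omega)\ge T(Q)$ for competitors $Q$ with $F_p(Q)=1$; it never verifies that $\Omega$ itself satisfies the constraint $F_p(\Omega)\le 1$. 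Your scaling argument rules out $F_p(\Omega)<1$, but ruling out $F_p(\Omega)>1$ is exactly the admissibility statement you are taking on faith from Lemma~\ref{LP23}(2); the maximality property alone says nothing about the value of $F_p(\Omega)$.

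The paper's proof of Lemma~\ref{hj} plugs this hole head-on: it computes $F_p(\Omega)=1$ directly from the Euler--Lagrange identity $T(\Omega)^{-1}d\mu^{tor}(\Omega,\cdot)=h(\Omega,\cdot)^{p-1}d\mu$. Substituting $d\mu=T(\Omega)^{-1}h(\Omega,\cdot)^{1-p}d\mu^{tor}(\Omega,\cdot)$ on $\{h(\Omega,\cdot)>0\}$ into $F_p(\Omega)=\frac{1}{n+2}\int_\sn h(\Omega,\cdot)^p\,d\mu$ gives
\[
F_p(\Omega)=\frac{1}{(n+2)T(\Omega)}\int_{\{h(\Omega,\cdot)>0\}}h(\Omega,\cdot)\,d\mu^{tor}(\Omega,\cdot)=\frac{T(\Omega,\Omega)}{T(\Omega)}=1,
\]
using that $\frac{h(\Omega,\cdot)}{(n+2)T(\Omega)}\,d\mu^{tor}(\Omega,\cdot)$ is a probability measure on $\{h(\Omega,\cdot)>0\}$. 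The inequality $T(\Omega)\ge T(Q)$ is then re-derived via the probability measure, Jensen, and Lemma~\ref{M}, making Lemma~\ref{hj} self-contained rather than dependent on the incomplete Lemma~\ref{LP23}(2). To make your proof rigorous without redundancy, either perform this direct calculation of $F_p(\Omega)=1$ yourself (it is two lines once $T(\Omega)=T(\Omega,\Omega)$ is invoked), or first strengthen the proof of Lemma~\ref{LP23}(2) to include the admissibility $F_p(\Omega)\le 1$.
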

\begin{proof}
Since $\Omega$ is a solution to Problem 3, it follows that $\frac{d\mu^{tor}(\Omega,\cdot)}{T(\Omega)}=h(\Omega,\cdot)^{p-1}d\mu$. We obtain
\begin{equation}
\begin{split}
\label{Up3}
F_{p}(\Omega)&=\frac{1}{n+2}\int_{\sn}h(\Omega,v)^{p}d\mu(v)\\
&=\frac{1}{n+2}\int_{\{ h(\Omega,\cdot)>0 \}}h(\Omega,v)^{p}d\mu(v)\\
&=\frac{1}{n+2}\int_{\{ h(\Omega,\cdot)>0 \}}h(\Omega,v)^{p}h(\Omega,v)^{1-p}\frac{1}{T(\Omega)}d\mu^{tor}(\Omega,v)\\
&=1,
\end{split}
\end{equation}
where the measure $\frac{h(\Omega,\cdot)}{(n+2)T(\Omega)}d\mu^{tor}(\Omega,\cdot)$ is a Borel probability measure on $\{h(\Omega,\cdot)>0\}$. By $F_{p}(Q)\leq 1$ and $\frac{d\mu^{tor}(\Omega,\cdot)}{T(\Omega)}=h(\Omega,\cdot)^{p-1}d\mu$, the Jensen inequality, the definition of the mixed torsional rigidity, the torsional Minkowski inequality, we get
\begin{equation}
\begin{split}
\label{Up3}
1&\geq F_{p}(Q)^{1/p}=\left(\frac{1}{n+2}\int_{\sn}h(Q,v)^{p}d\mu(v) \right)^{1/p}\\
&=\left(  \frac{1}{n+2}\int_{\{h(\Omega,\cdot)>0\}}h(Q,v)^{p}d\mu(v)+ \frac{1}{n+2}\int_{\{h(\Omega,\cdot)=0\}}h(Q,v)^{p}d\mu(v)\right)^{1/p}\\
&\geq \left( \frac{1}{n+2} \int_{\{h(\Omega,\cdot)>0\}}h(Q,v)^{p}d\mu(v) \right)^{1/p}\\
&=\left(\frac{1}{n+2}\int_{\{h(\Omega,\cdot)>0\}}\left(\frac{h(Q,v)}{h(\Omega,v)} \right)^{p}  \frac{h(\Omega,v)}{T(\Omega)}d\mu^{tor}(\Omega,v) \right)^{1/p}\\
&\geq \frac{1}{(n+2)T(\Omega)}\int_{\{h(\Omega,\cdot)>0\}}h(Q,v)d\mu^{tor}(\Omega,v)\\
&=\frac{T(\Omega,Q)}{T(\Omega)}-\frac{1}{n+2}\int_{\{h(\Omega,\cdot)=0\}}h(Q,v)h(\Omega,v)^{p-1}d\mu(v)\\
&\geq \left(\frac{T(Q)}{T(\Omega)} \right)^{\frac{1}{n+2}}.
\end{split}
\end{equation}
Hence, we obtain $T(\Omega)\geq T(Q)$.
\end{proof}
\begin{lem}\label{ui}
Let $\mu_{i}$ and $\mu$ be finite Borel measures on $\sn$ which are not concentrated on any closed hemisphere. Suppose that convex bodies $\Omega_{i}$ and $\Omega$ in $\rnnn$ containing the origin are solutions to Problem 3 for $(\mu_{i},p)$ and $(\mu,p)$, respectively. If $\mu_{i}\rightarrow \mu$ weakly, then $\Omega_{i}\rightarrow \Omega$ as $i\rightarrow \infty$.

\end{lem}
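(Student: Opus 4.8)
The plan is to exploit the variational characterization behind Problem 3. By Lemma \ref{hj}, each $\Omega_i$ is a solution of Problem 2 for $(\mu_i,p)$: writing $F_p^{(i)}(Q)=\frac{1}{n+2}\int_{\sn}h(Q,\cdot)^{p}\,d\mu_i$, one has $F_p^{(i)}(\Omega_i)=1$ and $T(\Omega_i)\ge T(Q)$ for every convex body $Q\ni o$ with $F_p^{(i)}(Q)\le 1$; and by Lemma \ref{LP23}(2), $\Omega$ solves Problem 2 for $(\mu,p)$. The strategy is then: (a) obtain uniform geometric bounds on $\{\Omega_i\}$; (b) extract a Hausdorff-convergent subsequence by Blaschke selection; (c) show the subsequential limit again solves Problem 2 for $(\mu,p)$; (d) conclude via the uniqueness Lemma \ref{LP33} and the standard subsequence principle.

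For (a), I would first test $\mu_i\to\mu$ against the constant function $1$ to get $M:=\sup_i\mu_i(\sn)<\infty$, and then upgrade the non-concentration of $\mu$ to a \emph{uniform} non-concentration of the tail: there are $i_0$ and $c_0>0$ with $\int_{\sn}(u\cdot\theta)_+\,d\mu_i(\theta)\ge c_0$ for all $i\ge i_0$ and all $u\in\sn$ — otherwise one finds $u_{i_k}\to u$ along which $\int(u\cdot\theta)_+\,d\mu=\lim_k\int(u_{i_k}\cdot\theta)_+\,d\mu_{i_k}=0$, contradicting that $\mu$ charges the open hemisphere $\{u\cdot\theta>0\}$. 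This controls $\mathrm{diam}(\Omega_i)$: if $x_i\in\Omega_i$ realizes $R_i:=\max_{y\in\Omega_i}|y|$ and $u_i=x_i/|x_i|$, then $h(\Omega_i,\theta)\ge R_i(u_i\cdot\theta)_+$ for all $\theta$ (since $o\in\Omega_i$), the set $A_i=\{\theta:(u_i\cdot\theta)_+>c_0/(2M)\}$ carries $\mu_i$-mass at least $c_0/2$, and hence $1=F_p^{(i)}(\Omega_i)\ge\frac{R_i^{p}}{n+2}(c_0/2M)^{p}(c_0/2)$, so $R_i$ (and thus $\mathrm{diam}(\Omega_i)\le 2R_i$) is bounded uniformly. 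Finally, fixing $r>0$ with $r^{p}M/(n+2)\le 1$ gives $F_p^{(i)}(rB)\le 1$, whence $T(\Omega_i)\ge T(rB)>0$ for $i\ge i_0$, a uniform lower bound on the torsional rigidity.

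For (b)--(c), Blaschke selection yields, from any subsequence of $\{\Omega_i\}$, a sub-subsequence $\Omega_{i_k}\to\Omega_0$ with $\Omega_0$ compact convex and $o\in\Omega_0$; by Lemma \ref{T1}(iii), $T(\Omega_0)=\lim_k T(\Omega_{i_k})\ge T(rB)>0$, so $\Omega_0$ is a genuine convex body. Since $h(\Omega_{i_k},\cdot)\to h(\Omega_0,\cdot)$ uniformly with uniformly bounded $p$-th powers, and $\mu_{i_k}\to\mu$ weakly with uniformly bounded masses, the functional passes to the limit — split $\int f_k\,d\nu_k-\int f\,d\nu$ into $\int(f_k-f)\,d\nu_k$ and $\int f\,d\nu_k-\int f\,d\nu$ — giving $F_p(\Omega_0)=\lim_k F_p^{(i_k)}(\Omega_{i_k})=1$. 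To see $\Omega_0$ is $T$-maximal for $(\mu,p)$, take any convex body $Q\ni o$ with $F_p(Q)\le 1$; then $c_k:=F_p^{(i_k)}(Q)\to F_p(Q)\le 1$, and $c_k>0$ since $\mu_{i_k}$ is not concentrated on a closed hemisphere, so $Q_k:=c_k^{-1/p}Q$ satisfies $F_p^{(i_k)}(Q_k)=1$ and $c_k^{-(n+2)/p}T(Q)=T(Q_k)\le T(\Omega_{i_k})$; letting $k\to\infty$ yields $F_p(Q)^{-(n+2)/p}T(Q)\le T(\Omega_0)$, i.e. $T(Q)\le T(\Omega_0)$. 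Thus $\Omega_0$ solves Problem 2 for $(\mu,p)$, so Lemma \ref{LP33} forces $\Omega_0=\Omega$; since the subsequence was arbitrary, $\Omega_i\to\Omega$.

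The hard part will be step (a): promoting the single non-concentration hypothesis on $\mu$ to the uniform estimate for the tail of $\{\mu_i\}$, and deducing from it the uniform bound on $\mathrm{diam}(\Omega_i)$ together with the uniform lower bound on $T(\Omega_i)$. These are precisely what make the Blaschke extraction legitimate and keep the limit body nondegenerate; once they are in place, the limit passages in (c) are routine, relying only on uniform convergence of the support functions and the uniform mass bound $M<\infty$.
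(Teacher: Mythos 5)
Your proof is correct, but the final identification step goes a different way than the paper. The paper, after extracting the Blaschke limit $\hat\Omega$ and showing it is a genuine body (via the volume bound $\mathrm{Vol}(\Omega_i)\ge T(\Omega_i)/\mathrm{diam}(\Omega_i)^2$ from Lemma~\ref{argu}), passes directly to the limit in the \emph{measure equation} $T(\Omega_i)^{-1}d\mu^{tor}(\Omega_i,\cdot)=h(\Omega_i,\cdot)^{p-1}d\mu_i$. This uses the weak continuity of $\mu^{tor}$ under Hausdorff convergence (Lemma~\ref{meau1}(d)) together with uniform convergence of $h(\Omega_i,\cdot)^{p-1}$ and weak convergence of $\mu_i$, and then invokes the uniqueness part of Theorem~\ref{TM} (solutions to Problem 3 are unique) to conclude $\hat\Omega=\Omega$. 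You instead stay entirely at the level of the \emph{variational} Problem 2: you show $F_p(\Omega_0)=1$ and $T(Q)\le T(\Omega_0)$ for all admissible $Q$ (by the $c_k^{-1/p}Q$ renormalization trick), so $\Omega_0$ is a maximizer of $T$ subject to $F_p\le1$, and then appeal to Lemma~\ref{LP33} (uniqueness of Problem 2 solutions, which rests on the Brunn--Minkowski inequality for torsion). Your route avoids entirely the weak continuity of $\mu^{tor}$, which is one of the less elementary inputs, and also sidesteps citing Theorem~\ref{TM}; the paper's route is shorter once those tools are granted. The preliminary compactness estimates are in essence the same in both arguments — you derive the diameter bound via a Chebyshev split after upgrading to a uniform power-one non-concentration estimate, while the paper works directly with $\int(u\cdot v)_+^p\,d\mu_i$; and you fix a single radius $r$ with $F_p^{(i)}(rB)\le1$ whereas the paper chooses $\lambda_i=(\frac{1}{n+2}|\mu_i|)^{-1/p}$, but these are cosmetic differences. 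One tiny point: you deduce that the Blaschke limit $\Omega_0$ is full-dimensional from $T(\Omega_0)>0$, which presumes continuity of $T$ at possibly degenerate compact convex sets; the paper instead carries the lower bound through $\mathrm{Vol}$ and uses continuity of volume. Both are standard, but the paper's phrasing stays strictly within what Lemma~\ref{T1}(iii) as stated provides.
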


\begin{proof}
First, we prove the sequence $\{\Omega_{i}\}$ is bounded. Let
\[
R_{i}=h(\Omega_{i},u_{i})=\max_{\sn}h(\Omega_{i},u).
\]
Due to $R_{i}u_{i}\in \Omega_{i}$, $h(\Omega_{i},v)\geq R_{i} \max\{u_{i}\cdot v,0\}=R_{i}(u_{i}\cdot v)_{+}$ for $v\in \sn$. By Lemma 4.1, we have
\begin{equation}\label{RI}
n+2=\int_{\sn}h(\Omega_{i},v)^{p}d\mu_{i}(v)\geq R^{p}_{i}\int_{\sn}(u_{i}\cdot v)^{p}_{+}d\mu_{i}(v).
\end{equation}
Since $\mu$ is not concentrated on any closed hemisphere, it follows that for any $u\in \sn$, $\int_{\sn}(u\cdot v)^{p}_{+}d\mu(v)>0$. Using the continuity of $u\mapsto \int_{\sn}(u\cdot v)^{p}_{+}d\mu(v)$ on $\sn$, there exists a constant $C>0$ depending on $p$, such that for any $u\in \sn$, $\int_{\sn}(u\cdot v)^{p}_{+}d\mu (v)\geq C>0$. By $\mu_{i}\rightarrow \mu$ weakly, there exists an $N>0$ such that
for $i\geq N$ and $u\in \sn$,
\[
\int_{\sn}(u\cdot v)^{p}_{+}d\mu_{i}(v)\geq \frac{C}{2}>0.
\]
Applying \eqref{RI}, we get
\[
R_{i}\leq \left( \frac{2(n+2)}{C}\right)^{1/p}.
\]
So, $\{\Omega_{i}\}$ is bounded. Second, by the Blaschke selection theorem, there exists a subsequence of $\{\Omega _{i}\}$, still denoted by $\{\Omega_{i}\}$ such that $\Omega_{i}\rightarrow \hat{\Omega}$. On one hand, if $Q=\lambda_{i}B$ with $\lambda_{i}=(\frac{1}{n+2}|\mu_{i}|)^{-\frac{1}{p}}$, then $1=\frac{1}{n+2}\int_{\sn}h(Q,v)^{p}d\mu_{i}(v)$. By Lemma 4.1, $T(\Omega_{i})\geq T(Q)$. Due to $\mu_{i}\rightarrow \mu$ weakly and $0<|\mu|< \infty$, it follows that
$0<\sup_{i}\mu_{i}<\infty$. Hence,
\[
T(\Omega_{i})\geq T(\lambda_{i}B)\geq T(B)\left(\frac{1}{n+2} \sup_{i}|\mu_{i}|\right)^{-\frac{n+2}{p}}>0.
\]
By lemma \ref{argu}, we have $Vol(\Omega_{i})\geq \frac{1}{diam(\Omega_{i})^{2}}T(\Omega_{i})\geq \frac{1}{d^{2}}T(B)\left(\frac{1}{n+2} \sup_{i}|\mu_{i}|\right)^{-\frac{n+2}{p}}>0$ for some $d>0$, thus, using the continuity of volume, we get
\[
Vol(\hat{\Omega})=\lim_{i\rightarrow \infty}Vol(\Omega_{i})>0.
\]
Hence, $\hat{\Omega}$ is a convex body.

We are in position to show that $\hat{\Omega}=\Omega$. Since $\Omega_{i}\rightarrow \hat{\Omega}$, $h(\Omega_{i},\cdot)\rightarrow h(\hat{\Omega},\cdot)$ uniformly on $\sn$, $T(\Omega_{i})\rightarrow T(\hat{\Omega})$ and $\mu^{tor}(\Omega_{i},\cdot)\rightarrow \mu^{tor}(\hat{\Omega},\cdot)$ weakly. Together with $\mu_{i}\rightarrow \mu$ weakly and $\frac{d\mu^{tor}(\Omega_{i},\cdot)}{T(\Omega_{i})}=h(\Omega_{i},\cdot)^{p-1}d\mu_{i}$, we have
\[
\frac{d\mu^{tor}(\hat{\Omega},\cdot)}{T(\hat{\Omega})}=h(\hat{\Omega},\cdot)^{p-1}d\mu.
\]
By Theorem \ref{TM}, it implies that $\hat{\Omega}=\Omega$. So, $\Omega_{i}\rightarrow \Omega$.
\end{proof}
\begin{lem}\label{pi}
Let $\mu$ be a finite Borel measure on $\sn$ which is not concentrated on any closed hemisphere. Suppose that convex bodies $\Omega_{i}$ and $\Omega$ in $\rnnn$  containing the origin are solutions to Problem 3 for $(\mu,p_{i})$ and $(\mu,p)$ respectively. If $p_{i}\rightarrow p$, then $\Omega_{i}\rightarrow \Omega$ as $i\rightarrow\infty$.
\end{lem}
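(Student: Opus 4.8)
The plan is to follow the argument of Lemma \ref{ui}, with the convergent exponents $p_i\to p$ now playing the role of the convergent measures. Since $p_i\to p$, fix $\delta>0$ with $1<p-\delta$ and restrict attention to the (cofinitely many) indices $i$ with $p_i\in[p-\delta,p+\delta]$.

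First I would prove that $\{\Omega_i\}$ is uniformly bounded. By Lemma \ref{hj} applied to $(\mu,p_i)$ we have $F_{p_i}(\Omega_i)=1$, that is, $\int_{\sn}h(\Omega_i,v)^{p_i}\,d\mu(v)=n+2$; with $R_i=h(\Omega_i,u_i)=\max_{\sn}h(\Omega_i,\cdot)$ and the bound $h(\Omega_i,v)\ge R_i(u_i\cdot v)_+$ this gives $n+2\ge R_i^{p_i}\int_{\sn}(u_i\cdot v)_+^{p_i}\,d\mu(v)$. The point is that $(u,q)\mapsto\int_{\sn}(u\cdot v)_+^{q}\,d\mu(v)$ is continuous on the compact set $\sn\times[p-\delta,p+\delta]$ and, since $\mu$ is not concentrated on any closed hemisphere, strictly positive there, hence bounded below by a constant $C>0$; together with $p_i\ge p-\delta>1$ this produces a uniform bound $R_i\le d$. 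By the Blaschke selection theorem it then suffices to show that the Hausdorff limit $\hat\Omega$ of any convergent subsequence of $\{\Omega_i\}$ equals $\Omega$.

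Next I would verify that $\hat\Omega$ is a convex body, by the volume lower bound used in Lemma \ref{ui}. Taking $Q_i=\lambda_iB$ with $\lambda_i=((n+2)/|\mu|)^{1/p_i}$ (so that $F_{p_i}(Q_i)=1$, and, as $p_i\to p$ and $0<|\mu|<\infty$, the $\lambda_i$ stay between two positive constants), Lemma \ref{hj} gives $T(\Omega_i)\ge T(\lambda_iB)=\lambda_i^{n+2}T(B)\ge c>0$ with $c$ independent of $i$; the gradient estimate of Lemma \ref{argu} then yields $T(\Omega_i)=\int_{\Omega_i}|\nabla u|^2\,dx\le diam(\Omega_i)^2\,Vol(\Omega_i)\le 4d^2\,Vol(\Omega_i)$, so $Vol(\Omega_i)\ge c/(4d^2)>0$, and continuity of the volume forces $Vol(\hat\Omega)>0$. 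Finally, to identify $\hat\Omega=\Omega$, I would pass to the limit in $T(\Omega_i)^{-1}d\mu^{tor}(\Omega_i,\cdot)=h(\Omega_i,\cdot)^{p_i-1}d\mu$ tested against an arbitrary $f\in C(\sn)$: the left side converges because $\mu^{tor}(\Omega_i,\cdot)\to\mu^{tor}(\hat\Omega,\cdot)$ weakly (Lemma \ref{meau1}(d)) and $T(\Omega_i)\to T(\hat\Omega)>0$ (Lemma \ref{T1}(iii)); the right side converges by dominated convergence, since $f(v)h(\Omega_i,v)^{p_i-1}\to f(v)h(\hat\Omega,v)^{p-1}$ pointwise (using $0\le h(\Omega_i,v)\le d$ and $p_i-1\to p-1>0$) and is dominated by the $\mu$-integrable constant $\|f\|_\infty\max\{1,d^{p+\delta}\}$. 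This shows $\hat\Omega$ solves Problem 3 for $(\mu,p)$, so by the uniqueness in Theorem \ref{TM} we get $\hat\Omega=\Omega$; since every convergent subsequence of the bounded sequence $\{\Omega_i\}$ has limit $\Omega$, we conclude $\Omega_i\to\Omega$.

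I expect the main obstacle to be making the a priori estimates uniform in the exponent: the positive lower bound $C$ for $\int_{\sn}(u\cdot v)_+^{q}\,d\mu(v)$ and the lower bound $c$ for $T(\Omega_i)$ have to hold uniformly while $p_i$ ranges near $p$, which is exactly what the joint continuity on the compact product $\sn\times[p-\delta,p+\delta]$ delivers. A secondary subtlety is the limit in the factor $h(\Omega_i,\cdot)^{p_i-1}$: because the origin may lie on $\partial\hat\Omega$, the support function $h(\hat\Omega,\cdot)$ need not be bounded away from zero, so one cannot expect uniform convergence of $h(\Omega_i,\cdot)^{p_i-1}$ and must instead argue by dominated convergence as above.
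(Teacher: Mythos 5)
Your proposal is correct and follows essentially the same route as the paper: establish uniform boundedness of $\{\Omega_i\}$ from the normalization $F_{p_i}(\Omega_i)=1$, extract a Hausdorff‑convergent subsequence via Blaschke selection, obtain a uniform positive lower bound on $T(\Omega_i)$ via $Q=\lambda_i B$ and Lemma \ref{hj}, convert it to a volume lower bound using Lemma \ref{argu}, pass to the limit in the defining relation $T(\Omega_i)^{-1}d\mu^{tor}(\Omega_i,\cdot)=h(\Omega_i,\cdot)^{p_i-1}d\mu$, and invoke the uniqueness from Theorem \ref{TM}. Your two deviations are minor refinements rather than a different method: you replace the paper's ad hoc $(\varepsilon,a_0)$ manipulation with the cleaner observation that $(u,q)\mapsto\int_{\sn}(u\cdot v)_+^{q}\,d\mu(v)$ is jointly continuous and positive on the compact set $\sn\times[p-\delta,p+\delta]$, and you spell out (via dominated convergence) the limit $h(\Omega_i,\cdot)^{p_i-1}\to h(\hat\Omega,\cdot)^{p-1}$ that the paper states without justification; in fact $(x,q)\mapsto x^q$ is uniformly continuous on $[0,M]\times[p-1-\delta,p-1+\delta]$ for $p-1-\delta>0$, so the convergence is even uniform, and your dominated‑convergence fallback is sound in any case.
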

\begin{proof}
First, we prove that the sequence $\{\Omega_{i}\}$ is bounded. By lemma 4.1,
\[
1=\frac{1}{n+2}\int_{\sn}h(\Omega_{i},v)^{p_{i}}d\mu(v).
\]
Let
\[
R_{i}=h(\Omega_{i},u_{i})=\max_{u\in\sn}h(\Omega_{i},u).
\]
Then, $R_{i}u_{i}\in \Omega$ and therefore for $v\in \sn$,
\[
h(\Omega_{i},v)\geq R_{i}(u_{i}\cdot v)_{+}.
\]
Third, since $\mu$ is not concentrated on any closed hemisphere, there exists a constant $c_{0}>0$ such that for $u\in \sn$,
\begin{equation}\label{}
\int_{\sn}(u\cdot v)^{p+\varepsilon}_{+}d\mu(v)\geq (n+2)\frac{1}{c^{p+\varepsilon}_{0}}>0.
\end{equation}
Due to $p_{i}\rightarrow p$ and $p>1$, for $\varepsilon>0$ and sufficiently large $i$, $1<p-\varepsilon<p_{i}<p+\varepsilon$ and there exists an $a_{0}>0$ such that $c^{p+\varepsilon}_{0}<(c_{0}+a_{0})^{p_{i}}$,
\begin{equation}
\begin{split}
\label{Up3}
1&=\frac{1}{n+2}\int_{\sn}h(\Omega_{i},v)^{p_{i}}d\mu(v)\\
&\geq \frac{1}{n+2}\int_{\sn}R^{p_{i}}_{i}(u_{i}\cdot v)^{p_{i}}_{+}d\mu (v)\\
&\geq \frac{1}{n+2}R^{p_{i}}_{i}\int_{\sn}(u_{i}\cdot v)^{p+\varepsilon}_{+}d\mu (v)\\
&\geq R^{p_{i}}_{i}\frac{1}{c^{p+\varepsilon}_{0}}\\
&\geq \left(\frac{R_{i}}{c_{0}+a_{0}}\right)^{p_{i}}.
\end{split}
\end{equation}
Because $p_{i}\geq1$, it follows that $R_{i}\leq c_{0}+a_{0}$, hence, the sequence $\{\Omega_{i}\}$ is bounded. Second, by the Blaschke selection theorem, there exists a subsequence of $\{\Omega _{i}\}$, still denoted by $\{\Omega_{i}\}$ such that $\Omega_{i}\rightarrow \hat{\Omega}$. On one hand, if $Q=\lambda_{i}B$ with $\lambda_{i}=(\frac{1}{n+2}|\mu|)^{-\frac{1}{p_{i}}}$, then $1=\frac{1}{n+2}\int_{\sn}h(Q,v)^{p_{i}}d\mu(v)$. By Lemma 4.1, $T(\Omega_{i})\geq T(Q)$. Due to $p_{i}\rightarrow p$ and $p>1$, it follows that for sufficiently large $i$,

\[
T(\Omega_{i})\geq T(\lambda_{i}B)\geq \frac{1}{2}T(B)\left(\frac{1}{n+2}|\mu| \right)^{-\frac{n+2}{p}}>0.
\]
From lemma \ref{argu}, we know that $Vol(\Omega_{i})\geq \frac{1}{diam(\Omega_{i})^{2}}T(\Omega_{i})\geq \frac{1}{2d^{2}}T(B)\left(\frac{1}{n+2}|\mu| \right)^{-\frac{n+2}{p}}>0$ holds for some $d>0$,  it follows that
\[
Vol(\hat{\Omega})=\lim_{i\rightarrow \infty}Vol(\Omega_{i})>0.
\]
Hence, $\hat{\Omega}$ is a convex body.

We are in position to show that $\hat{\Omega}=\Omega$. Since $\Omega_{i}\rightarrow \hat{\Omega}$, then $h(\Omega_{i},\cdot)\rightarrow h(\hat{\Omega},\cdot)$ uniformly on $\sn$, $T(\Omega_{i})\rightarrow T(\hat{\Omega})$ and $\mu^{tor}(\Omega_{i},\cdot)\rightarrow \mu^{tor}(\hat{\Omega},\cdot)$ weakly. Together with $p_{i}\rightarrow p$ and $\frac{d\mu^{tor}(\Omega_{i},\cdot)}{T(\Omega_{i})}=h(\Omega_{i},\cdot)^{p_{i}-1}d\mu$, we have
\[
\frac{d\mu^{tor}(\hat{\Omega},\cdot)}{T(\hat{\Omega})}=h(\hat{\Omega},\cdot)^{p-1}d\mu.
\]
By Theorem \ref{TM}, it implies that $\hat{\Omega}=\Omega$. So, $\Omega_{i}\rightarrow \Omega$.

Now, we are in place to prove the main results.

\emph{Proof of Theorem 1.1.} Let convex bodies $\Omega_{i}$ and $\Omega$ in $\rnnn$ containing the origin satisfy $d\mu^{tor}(\Omega_{i},\cdot)=h(\Omega_{i},\cdot)^{p-1}d\mu_{i}$ and $d\mu(\Omega,\cdot)=h(\Omega,\cdot)^{p-1}d\mu$, which $\Omega_{i}$ and $\Omega$ are solutions to Problem 4 for $(\mu_{i},p)$ and $(\mu,p)$, respectively. By Lemma \ref{r34}(2), it follows that $\tilde{\Omega}_{i}=T(\Omega_{i})^{-1/p}\Omega_{i}$, $\tilde{\Omega}=T(\Omega)^{-1/p}(\Omega)$ are solutions to Problem 3 for $(\mu_{i},p)$ and $(\mu,p)$ respectively. Since $\mu_{i}\rightarrow \mu$ weakly, by Lemma \ref{ui}, it follows that $\tilde{\Omega}_{i}\rightarrow \tilde{\Omega}$ as $i\rightarrow \infty$. By Lemma \ref{r34}(1) and the continuity of torsional rigidity, it follows that
\[
\Omega_{i}=T(\tilde{\Omega}_{i})^{1/(p-2-n)}\tilde{\Omega}_{i}\rightarrow  T(\tilde{\Omega})^{1/(p-2-n)}\tilde{\Omega}=\Omega.
\]
 This completes the proof.

\emph{Proof of Theorem 1.2.} Let convex bodies $\Omega_{i}$ and $\Omega$ in $\rnnn$ containing the origin satisfy $d\mu^{tor}(\Omega_{i},\cdot)=h(\Omega_{i},\cdot)^{p_{i}-1}d\mu$ and $d\mu(\Omega,\cdot)=h(\Omega,\cdot)^{p-1}d\mu$, which $\Omega_{i}$ and $\Omega$ are solutions to Problem 4 for $(\mu,p_{i})$ and $(\mu,p)$, respectively. By Lemma \ref{r34}(2), it follows that $\tilde{\Omega}_{i}=T(\Omega_{i})^{-1/p_{i}}\Omega_{i}$, $\tilde{\Omega}=T(\Omega)^{-1/p}(\Omega)$ are solutions to Problem 3 for $(\mu,p_{i})$ and $(\mu,p)$ respectively. Since $p_{i}\rightarrow p$, by Lemma \ref{pi}, it follows that $\tilde{\Omega}_{i}\rightarrow \tilde{\Omega}$ as $i\rightarrow \infty$, by Lemma \ref{r34}(1) and the continuity of torsional rigidity, it follows that
\[
\Omega_{i}=T(\tilde{\Omega}_{i})^{1/(p_{i}-2-n)}\tilde{\Omega}_{i}\rightarrow  T(\tilde{\Omega})^{1/(p-2-n)}\tilde{\Omega}=\Omega.
\]
 This completes the proof.

\end{proof}

\section{Appendix-proof of Theorem \ref{TM}}
\label{Sec5}
We first prove the discrete version of Theorem \ref{TM} as follows.
\begin{theo}\label{AD}
Suppose $\mu$ is a discrete measure on $\sn$ that is not concentrated on any closed hemisphere. Then, for each $1<p<\infty$, there exists a unique polytope $P$ containing the origin in its interior such that
\[
\frac{\mu^{tor}_{p}(P,\cdot)}{T(P)}=\mu.
\]
\end{theo}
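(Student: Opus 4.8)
The plan is to prove Theorem~\ref{AD} by the variational method, obtaining the polytope $P$ as a maximizer of the discrete instance of Problem~2. Write the discrete measure as $\mu=\sum_{k=1}^{N}c_{k}\delta_{u_{k}}$ with $c_{k}>0$ and $N\ge n+1$; since $\mu$ is not concentrated on a closed hemisphere, the vectors $u_{1},\dots,u_{N}$ positively span $\rnnn$, and for a convex body $Q$ containing the origin one has $F_{p}(Q)=\frac{1}{n+2}\sum_{k=1}^{N}c_{k}h(Q,u_{k})^{p}$, which depends on $Q$ only through the finitely many support numbers $h(Q,u_{k})$. I would study $\sup\{T(Q):o\in Q,\ F_{p}(Q)\le 1\}$; replacing any admissible $Q$ by the polytope $\bigcap_{k}\{x:x\cdot u_{k}\le h(Q,u_{k})\}\supseteq Q$ leaves every $h(\cdot,u_{k})$, hence $F_{p}$, unchanged, does not decrease $T$ (monotonicity, Lemma~\ref{T1}(iv)), and keeps the origin inside, so the supremum equals that over $\mathcal{P}(u_{1},\dots,u_{N})$ and any maximizer may be taken in this class.

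The first step is existence of a maximizer $P$ which is a convex body with $F_{p}(P)=1$. The constraint forces $h(Q,u_{k})\le(\tfrac{n+2}{c_{k}})^{1/p}$ for each $k$, and since the $u_{k}$ positively span, a uniform bound on the support numbers together with $o\in Q$ bounds the family; by the Blaschke selection theorem a maximizing sequence has a Hausdorff-convergent subsequence, and by the polytopalization above the limit $P$ may be taken in $\mathcal{P}(u_{1},\dots,u_{N})$. The competitor $r_{0}B$ with $r_{0}=(\tfrac{n+2}{\sum_{k}c_{k}})^{1/p}$ shows the supremum is at least $r_{0}^{\,n+2}T(B)>0$; combining this with the bound $\mathrm{Vol}(Q)\ge T(Q)/\mathrm{diam}(Q)^{2}$ coming from Lemma~\ref{argu}, the boundedness of the diameters, the continuity of volume, and the continuity of $T$ (Lemma~\ref{T1}(iii)), one gets $\mathrm{Vol}(P)>0$ and $T(P)=\sup$. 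Finally the homogeneities $T(mQ)=m^{n+2}T(Q)$ and $F_{p}(mQ)=m^{p}F_{p}(Q)$ force $F_{p}(P)=1$, for otherwise $F_{p}(P)^{-1/p}P$ would be admissible with strictly larger torsional rigidity.

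The heart of the matter — and the step I expect to be the main obstacle — is that $P$ contains the origin in its interior, since only then does Lemma~\ref{LP23}(1) apply. Suppose instead $o\in\partial P$. Then some facet of $P$ through $o$ has a normal $u_{k_{0}}$ with $h(P,u_{k_{0}})=0$, and this facet has positive $(n-1)$-measure; by the representation \eqref{tormes2} together with the positivity of $|\nabla u|$ on the relative interior of a flat facet (Hopf's lemma), $\mu^{tor}(P,\{u_{k_{0}}\})>0$. Put $I_{0}=\{k:h(P,u_{k})=0\}$ and choose $g\in C(\sn)$ with $g\ge 0$, $g(u_{k_{0}})>0$, and $g(u_{k})=0$ for all $k\notin I_{0}$; set $\Omega_{t}=[h(P,\cdot)+tg]$ for small $t>0$. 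Since $h(P,\cdot)+tg\ge h(P,\cdot)$ the Wulff shape satisfies $\Omega_{t}\supseteq[h(P,\cdot)]=P$, hence $T(\Omega_{t})\ge T(P)$, and by the Hadamard-type variational formula for torsional rigidity \cite{CF10} (as used in the proof of Lemma~\ref{LP23}), $\tfrac{d}{dt}T(\Omega_{t})\big|_{t=0^{+}}=\int_{\sn}g\,d\mu^{tor}(P,\cdot)\ge g(u_{k_{0}})\mu^{tor}(P,\{u_{k_{0}}\})=:c>0$. On the other hand $h(\Omega_{t},\cdot)\le h(P,\cdot)+tg$, so $h(\Omega_{t},u_{k})=h(P,u_{k})$ for $k\notin I_{0}$ (using also $\Omega_{t}\supseteq P$) and $0\le h(\Omega_{t},u_{k})\le tg(u_{k})$ for $k\in I_{0}$; therefore $F_{p}(\Omega_{t})=1+O(t^{p})$, and here the hypothesis $p>1$ is indispensable, since it makes $t^{p}=o(t)$. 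Consequently the rescaled body $\widetilde\Omega_{t}=F_{p}(\Omega_{t})^{-1/p}\Omega_{t}$ is admissible, contains $o$, and satisfies $T(\widetilde\Omega_{t})=F_{p}(\Omega_{t})^{-(n+2)/p}T(\Omega_{t})\ge\bigl(1-O(t^{p})\bigr)\bigl(T(P)+c\,t+o(t)\bigr)>T(P)$ for all sufficiently small $t>0$, contradicting the maximality of $T(P)$. This is exactly why the Wulff shape $[h(P,\cdot)+tg]$ is the right competitor: its support numbers never exceed $h(P,\cdot)+tg$, so perturbing only the directions where $h(P,\cdot)$ vanishes freezes every other support number and keeps the penalty in $F_{p}$ of order $t^{p}$, whereas a polytopal relaxation of the active constraints would in general raise the remaining support numbers at first order in $t$ and destroy the estimate.

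Once $o\in\mathrm{int}(P)$ is established, Lemma~\ref{LP23}(1) yields that $P$ solves Problem~3, i.e. $\tfrac{d\mu^{tor}(P,\cdot)}{T(P)}=h(P,\cdot)^{p-1}\,d\mu$, which by \eqref{pmea1} is precisely $\tfrac{\mu^{tor}_{p}(P,\cdot)}{T(P)}=\mu$. For uniqueness, if $P'$ is another polytope with the origin in its interior satisfying this identity, then $P'$ solves Problem~3 for $(\mu,p)$, hence Problem~2 for $(\mu,p)$ by Lemma~\ref{LP23}(2), and Lemma~\ref{LP33} — whose proof rests on the Brunn--Minkowski inequality for torsional rigidity (Lemma~\ref{BM}) and on $\mu$ not being concentrated on a closed hemisphere — forces $P'=P$. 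This completes the proof of Theorem~\ref{AD}.
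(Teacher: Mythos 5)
Your proof is correct and follows the same overall architecture as the paper's (extremal problem, origin-in-interior via perturbation, then Lemmas \ref{LP23} and \ref{LP33}), but the two key steps are packaged differently. First, the paper solves the \emph{minimization} Problem~1 and passes to Problem~2 through Lemma~\ref{re12}, whereas you attack Problem~2 directly; this is a harmless reshuffling. The more substantive difference is in the crucial perturbation showing $o\in\mathrm{int}\,P$. The paper's Lemma~\ref{ra} deforms \emph{all} support numbers simultaneously, using the explicit compensating vector $y_{t}=\bigl(t,\dots,t,(h_{k+1}^{p}-ct^{p})^{1/p},\dots,(h_{m}^{p}-ct^{p})^{1/p}\bigr)$ with $c=\sum_{i\le k}c_i/\sum_{i>k}c_i$, chosen so that $F_{p}(P(y_{t}))\le 1$ holds exactly, and then computes $\liminf_{t\to 0^{+}}t^{-1}\bigl(T(P(y_{t}))-T(P(y_{t}),P(y_{0}))\bigr)$ via the mixed-torsional-rigidity representation and the Minkowski inequality. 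You instead only raise the active (zero) support numbers via the Wulff shape $[h(P,\cdot)+tg]$ with $g$ supported on those directions, accept that $F_{p}$ drifts to $1+O(t^{p})$, and rescale, letting $p>1$ kill the $O(t^{p})$ penalty against the $O(t)$ gain in $T$. Both implementations exploit exactly the same mechanism ($t$ beats $t^{p}$), but yours avoids the explicit compensating constant at the cost of a normalization step, and you supply the Hopf-lemma justification for $\mu^{tor}(P,\{u_{k_0}\})>0$ that the paper asserts without comment. Either route is fine; the paper's keeps the constraint set fixed (cleaner for an argument phrased as ``$P(y_t)$ is admissible''), while yours is a bit more modular and makes the role of $p>1$ maximally transparent.
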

To prove Theorem \ref{AD}, we first do some preparation.
\begin{lem}\label{ra}
Suppose $\mu$ is a discrete measure on $\sn$ and is not concentrated on any closed hemisphere. Let $1<p<\infty$. If $\Omega$ is a convex body in $\rnnn$ containing the origin and solving Problem 2 for $(\mu,p)$, then $\Omega$ is a convex proper polytope containing the origin in its interior.
\end{lem}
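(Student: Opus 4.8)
The plan is to show that a convex body $\Omega$ solving Problem 2 for a discrete measure $\mu = \sum_{k=1}^N c_k \delta_{u_k}$ (with $c_k>0$ and $u_1,\dots,u_N$ not concentrated on any closed hemisphere) must in fact be a polytope of the form $P = \bigcap_{k=1}^N H^-(P,u_k)$, and that it contains the origin in its interior. First I would observe that the functional $F_p$ only depends on $h(\Omega,u_1),\dots,h(\Omega,u_N)$, so if we set $P = \bigcap_{k=1}^N H^-(\Omega,u_k)$, the polytope $P$ contains $\Omega$, satisfies $h(P,u_k) = h(\Omega,u_k)$ for all $k$, and hence $F_p(P) = F_p(\Omega) \le 1$, while by monotonicity of torsional rigidity (Lemma \ref{T1}(iv)) $T(P) \ge T(\Omega)$. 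Since $\Omega$ is optimal for Problem 2, we must have $T(P) = T(\Omega)$; then the equality characterization — via Lemma \ref{BM} applied to $\tfrac12 P + \tfrac12 \Omega$, exactly as in the proof of Lemma \ref{LP33}, or more directly from strict monotonicity of $T$ — forces $P = \Omega$. Thus $\Omega$ is a polytope with outer normals among $\{u_1,\dots,u_N\}$, i.e. $\Omega \in \mathcal{P}(u_1,\dots,u_N)$, and in particular $\Omega$ is a proper polytope (it has nonempty interior since it is a convex body).

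Next I would show the origin lies in the interior of $\Omega$. Suppose not; since $\Omega$ contains the origin, the origin lies on $\partial\Omega$, so there is a direction $\xi \in \sn$ with $h(\Omega,\xi) = 0$, and hence a facet (or lower-dimensional face) of $\Omega$ passing through $o$. Because $\Omega = \bigcap_k H^-(\Omega,u_k)$, the set $\{k : h(\Omega,u_k) = 0\}$ is nonempty. The strategy is to perturb $\Omega$ by translating it slightly in a suitable direction $x_0$ so as to strictly increase all the vanishing support numbers $h(\Omega,u_k)$ while controlling the others, then rescale down to restore the constraint $F_p \le 1$; translation invariance of $T$ (Lemma \ref{T1}(ii)) combined with the scaling $T(mQ) = m^{n+2}T(Q)$ would yield a competitor with strictly larger torsional rigidity, contradicting optimality. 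Concretely, replacing $h(\Omega,\cdot)$ by $h(\Omega,\cdot) + (x_0 \cdot \,\cdot\,)$ raises $h$ at those $u_k$ where $x_0\cdot u_k>0$; choosing $x_0$ pointing into $\Omega$ from $o$ (possible since $o \in \partial\Omega$ and $\Omega$ is a body) makes $x_0 \cdot u_k > 0$ at every $u_k$ with $h(\Omega,u_k)=0$ — here one uses that such $u_k$ are outer normals at $o$. One then checks $F_p$ of the translated body is still finite and, after a shrink by a factor $1 - \varepsilon$, at most $1$, while $T$ has only decreased by the factor $(1-\varepsilon)^{n+2}$; a careful first-order estimate shows the net change in $T$ can be made positive for small $\varepsilon$ because the translate strictly enlarges $\Omega$ in the relevant directions. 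This contradicts $\Omega$ being a maximizer, so $o \in \mathrm{int}\,\Omega$.

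The main obstacle I anticipate is the last step: making the translate-then-rescale perturbation argument quantitatively correct. One has to verify that translating by $x_0$ genuinely produces a body whose support function dominates $h(\Omega,\cdot)$ pointwise on $\mathrm{supp}\,\mu$ after the compensating scaling — i.e. that the gain from translation at the vanishing normals outweighs the loss from the $(1-\varepsilon)$ factor at the other normals — which requires knowing that $h(\Omega,u_k)$ is bounded away from $0$ on the complementary set of indices and a clean lower bound on $x_0\cdot u_k$ there. An alternative, cleaner route for this step is to invoke the Lagrange-multiplier identity already derived in the proof of Lemma \ref{LP23}(1): if $o\in\mathrm{int}\,\Omega$ then $\Omega$ solves Problem 3, but here we want the converse direction, so instead one argues directly that were $o \in \partial\Omega$, some $h(\Omega,u_k)=0$ and the associated term contributes nothing to $F_p$, leaving "room" in the constraint that a translation exploits. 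The homogeneity and monotonicity lemmas (Lemmas \ref{T1} and \ref{meau1}) together with Lemma \ref{argu} (the gradient bound giving $T(Q)\le \mathrm{diam}(Q)^2\,\mathrm{Vol}(Q)$, hence $T$ is finite and positive on bodies) supply all the quantitative control needed, and the equality case of Lemma \ref{BM} finishes the polytope claim.
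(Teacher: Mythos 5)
Your first step (showing $\Omega$ is a polytope) matches the paper: you pass to the circumscribed polytope $P=\bigcap_k H^-(\Omega,u_k)$, note $F_p(P)=F_p(\Omega)$ and $T(P)\ge T(\Omega)$, conclude $T(P)=T(\Omega)$ by optimality, and then invoke the uniqueness Lemma~\ref{LP33} (the paper does exactly this). No issue there.

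The second step (origin in the interior) contains a genuine error, which you in fact half-anticipate. Your competitor is a translate followed by a shrink by $(1-\varepsilon)$. But translation leaves $T$ \emph{unchanged} (Lemma~\ref{T1}(ii)) and shrinking \emph{strictly decreases} it by the factor $(1-\varepsilon)^{n+2}$; the net change in $T$ is therefore strictly negative, not positive, so this produces a worse competitor and no contradiction. The only way a translate-and-rescale argument could work is if translating strictly \emph{decreases} $F_p$, so that one can then dilate to restore $F_p=1$ and thereby \emph{increase} $T$. That however is not established, and generically fails: writing the first-order change $\tfrac{d}{dt}F_p(\Omega+tw)\big|_{t=0}=\tfrac{p}{n+2}\,w\cdot v$ with $v=\sum_{k:\,h(\Omega,u_k)>0}c_k h(\Omega,u_k)^{p-1}u_k$ (the $h=0$ terms contribute $O(t^p)=o(t)$ since $p>1$), the sign of this derivative depends on $w\cdot v$; if $v=0$ the leading change in $F_p$ is nonnegative of order $t^{\min(2,p)}$, and even when $v\ne 0$ the choice $w=-v$ need not be compatible with keeping $o\in\Omega+tw$. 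You never verify $F_p$ drops, and the needed compatibility between ``moving $o$ inward'' and ``decreasing $F_p$'' is not argued.

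The paper avoids translation entirely. Writing $h_1=\cdots=h_k=0$ and $h_{k+1},\dots,h_m>0$, it perturbs the defining support vector by $y_t=(t,\dots,t,(h_{k+1}^p-ct^p)^{1/p},\dots,(h_m^p-ct^p)^{1/p})$ with $c=\bigl(\sum_{i\le k}c_i\bigr)/\bigl(\sum_{i>k}c_i\bigr)$, which keeps $\sum_i c_i y_{i,t}^p$ exactly equal to $\sum_i c_i h_i^p$, hence $F_p(P(y_t))\le 1$. Crucially, the nonzero support numbers recede only at rate $o(t)$ while the zero ones advance at rate $1$, so (via the mixed-torsion formula, weak convergence of $\mu^{tor}$, and the Minkowski inequality for torsional rigidity) one finds $\liminf_{t\to0^+}\bigl(T(P(y_t))-T(P(y_0))\bigr)/t\gtrsim \sum_{i\le k}\mu^{tor}(P(y_0),\{\xi_i\})>0$, the positivity coming from the fact that some facet of $P(y_0)$ through the origin carries positive surface area, hence positive torsion measure. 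This is the mechanism your proposal is missing; a translation does not have the property that the ``good'' facets move out at rate $O(t)$ while the rest move only at rate $o(t)$, and your rescaling goes in the wrong direction.
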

\begin{proof}
We do some preparation. Let
\begin{equation}\label{DE}
\mu=\sum_{i}^{m}c_{i}\delta_{\xi_{i}},
\end{equation}
where $\xi_{i}\in \sn$ are pairwise distinct unit vectors not contained in any closed hemisphere, and $c_{i}>0$ for all $i$. Then, for a convex body $Q$ containing the origin, we define the functional by
\begin{equation}\label{bv1}
F_{p}(Q)=\frac{1}{n+2}\sum^{m}_{i=1}c_{i}h(Q,\xi_{i})^{p}.
\end{equation}
For $y\in [0,\infty)^{m}$, define $P(y)$ the convex polytope by
\begin{equation}\label{bv2}
P(y)=\bigcap^{m}_{i=1}\{x\in \rnnn: x\cdot \xi_{i}\leq y_{i}\}.
\end{equation}
Now, we first imply that $\Omega$ is a convex proper polytope. Let
\begin{equation}\label{bv3}
h=(h_{1},\ldots,h_{m})=(h(\Omega,\xi_{1}),\ldots,h(\Omega,\xi_{m})).
\end{equation}
 Clearly, one see $\Omega\subseteq P(h)$, we have $T(\Omega)\leq T(P(h))$. Since $F_{p}(P(h))=F_{p}(\Omega)$, the proper polytope $P(h)$ satisfies the constraint in Problem 2. On the other hand, since the convex body $\Omega$ attacks Problem 2,  $T(\Omega)=T(P(h))$, it tells us that $P(h)$ also solves Problem 2. Together with Lemma \ref{LP33},  we get $P(h)=\Omega$.

 Next, we shall verify that $P(h)$ contains the origin in its interior. For this purpose, we take contradiction argument. Assuming that $o\in \partial P(h)$, we construct a new proper polytope $P(z)$ such that
 \begin{equation}\label{bv4}
o\in {\rm int} P(z), \quad F_{p}(P(z))\leq 1, \quad but \ T(P(z))> T(P(h)).
\end{equation}
 Since $o\in \partial P(y)$,  we assume that
 \begin{equation}\label{bv5}
h_{1}=\ldots=h_{k}=0,\quad and\quad h_{k+1},\ldots, h_{m}>0, \ for \ some \ 1\leq k<m.
\end{equation}
Now, let
\begin{equation}\label{bv6}
c=\frac{\sum^{k}_{i=1}c_{i}}{\sum^{m}_{i=k+1}c_{i}}
\end{equation}
and $t>0$ be sufficiently small, such that $0<t<t_{0}<\min\{ h^{p}_{i}/c:1\leq i \leq k\}^{1/p}$. Define
\begin{equation}\label{bv7}
y_{t}=(y_{1,t},\ldots,y_{m,t})=\left(t,\ldots,t,(h^{p}_{k+1}-ct^{p})^{1/p},\ldots,(h^{p}_{m}-ct^{p})^{1/p}\right).
\end{equation}
Hence $P(y_{0})=P(h)$, and $P(y_{t})$ is the convex polytope containing the origin in its interior.

Next, we give some observations.

First, we discover that $P(y_{t})$ is continuous, i.e.,
\begin{equation}\label{AP1}
\lim_{t\rightarrow 0^{+}}P(y_{t})=P(h).
\end{equation}

Second, in view of the fact that $P(y_{t})$ has at most $m$ facets whose outer unit normals are from the set $\{\xi_{1},\ldots,\xi_{m}\}$, moreover, $h(P(y_{t},\xi_{i}))\leq y_{i,t}$ with equality if $S(P(y_{t}),\xi_{i})>0$, for $i=1,\ldots,m$, combining the definition of $T$ with the absolute of $\mu^{tor}(P(y_{t}),\cdot)$ with regard to $S(P(y_{t}),\cdot)$, we get
\begin{equation}\label{AP2}
T(P(y_{t}))=\frac{1}{n+2}\sum^{m}_{i=1}y_{i,t}\mu^{tor}(P(y_{t}),\{\xi_{i}\}).
\end{equation}

Third, for $t_{1}, t_{2}\in [0,t_{0}]$, together with the definition of the mixed torsional rigidity, we have
\begin{equation}\label{AP3}
T(P(y_{t_{1}}), P(y_{t_{2}})))=\frac{1}{n+2}\sum^{m}_{i=1}h(P(y_{t_{2}},\xi_{i}))\mu^{tor}(P(y_{t_{1}}),\{\xi_{i}\}).
\end{equation}

Fourth, since there is at least one facet of $P(y_{0})$ containing $o$ such that $\sum^{k}_{i=1}S(P(y_{0}),\{\xi_{i}\})>0$, we obtain
\begin{equation}\label{AP4}
\sum^{k}_{i=1}\mu^{tor}(P(y_{0}),\{\xi_{i}\}))>0.
\end{equation}

Using \eqref{AP1}, \eqref{AP2}, \eqref{AP3}, \eqref{AP4} and the weak convergence of torsional measure, we have
\begin{equation}
\begin{split}
\label{II}
&(n+2)\lim_{t\rightarrow 0^{+}}\frac{T(P(y_{t}))-T(P(y_{t}),P(y_{0}))}{t}\\
&=\sum^{k}_{i=1}\lim_{t\rightarrow 0^{+}}\frac{t-0}{t}\mu^{tor}(P(y_{t}),\{\xi_{i}\})\\
&\quad +\sum^{m}_{i=k+1}\lim_{t\rightarrow 0^{+}}\frac{(h^{p}_{i}-ct^{p})^{1/p}-h_{i}}{t}\mu^{tor}(P(y_{t}),\{\xi_{i}\})\\
&=\sum^{k}_{i=1}\mu^{tor}(P(y_{0}),\{\xi_{i}\})\\
&>0.
\end{split}
\end{equation}
Applying \eqref{II} into the torsional Minkowski inequality, in conjunction with the continuity of $T(P(y_{t}))$ in $t$, we derive
\begin{equation*}
\begin{split}
\label{Up3}
&T(P(y_{0}))^{\frac{n+1}{n+2}}\liminf_{t\rightarrow 0^{+}}\frac{T(P(y_{t}))^{\frac{1}{n+2}}-T(P(y_{0}))^{\frac{1}{n+2}}}{t}\\
&=\liminf_{t\rightarrow 0^{+}} \frac{T(P(y_{t}))-T(P(y_{t}))^{\frac{n+1}{n+2}}T(P(y_{0}))^{\frac{1}{n+2}}}{t}\\
&\geq \liminf_{t\rightarrow 0^{+}}\frac{T(P(y_{t}))-T(P(y_{t}),P(y_{0}))}{t}\\
&>0.
\end{split}
\end{equation*}
So, we conclude $T(P(y_{t}))>T(P(y_{0}))$ for sufficiently small $t>0$.

We are in position to take a sufficiently small $t>0$ and let $z=y_{t}$. To reveal that $P(z)$ is indeed a convex polytope satisfying \eqref{bv6}, we need to claim $F(P(z))\leq 1$.

In fact, in light of \eqref{bv1} and $P(z)=P(y_{t})$, and $h(P(y_{t}),\xi_{i})\leq y_{i,t}$ for all $i$, \eqref{bv7} and $F_{p}(P(h))=\frac{1}{n+2}\sum^{m}_{k+1}c_{i}h^{p}_{i}$ (from \eqref{bv1}, \eqref{bv2}, \eqref{bv3}, \eqref{bv5}), \eqref{bv6} and the fact that $P(h)$ is a solution to Problem 2, it suffices to have
\begin{equation}
\begin{split}
\label{Up3}
F(P(z))&=\frac{1}{n+2}\sum^{m}_{i=1}c_{i}h(P(y_{t},\xi_{i})^{p}\\
&\leq \frac{1}{n+2}\sum^{m}_{i=1}c_{i}y^{p}_{i,t}\\
&=\frac{1}{n+2}\sum^{k}_{i=1}c_{i}t^{p}+\frac{1}{n+2}\sum^{m}_{i=k+1}c_{i}(h^{p}_{i}-ct^{p})\\
&=F_{p}(P(h))+\frac{1}{n+2}\left( \sum^{k}_{i=1}c_{i}-c\sum^{k}_{i+1}c_{i}\right)t^{p}\\
&=F_{p}(P(h))\\
&=1.
\end{split}
\end{equation}
Hence, we complete the proof.
\end{proof}

\emph{Proof of Theorem \ref{AD}.} Assume $\mu=\sum^{m}_{i=1}c_{i}\delta_{\xi_{i}}$, where $\xi_{i}\in \sn$ and $c_{i}>0$ for all $i$. Let $Q$ be convex body containing the origin, we set
\[
F_{p}(Q)=\frac{1}{n+2}\sum^{m}_{i=1}c_{i}h(Q,\xi_{i})^{p}.
\]
In view of the relations of solutions to Problems 1, 2 and 3, so, we only need treat that Problem 1 for $(\mu,p)$ has a solution $P_{0}$, where $P_{0}$ is a convex proper polytope containing the origin.

First, we aim to affirm that the minimizing sequence $\{P_{j}\}_{j}$ for Problem 1 shall satisfy that each $P_{j}$ is a convex proper polytope with facet normals locating in $\{\xi_{1},\ldots,\xi_{m}\}$. To realize this purpose, we set the Wulff shape related to each convex body $Q$ as
\[
P=\{x\in \rnnn: x\cdot \xi_{i}\leq h(Q,\xi_{i}),i=1,\ldots,m\}.
\]
In view of the fact that $\mu$ is not concentrated on any closed hemisphere, $Q$ is a convex body containing the origin and $P$ is a bounded convex polytope containing $Q$. Furthermore, $h(Q,\xi_{i})=h(P,\xi_{i})$ for all $i$. Hence, one see
\[
T(P)\geq T(Q), \quad F_{p}(P)=F_{p}(Q).
\]
So, we conclude that $\{P_{j}\}_{j}$ with faces orthogonal to $\xi_{i}$ ($i=1,\ldots,m$) is the desired minimizing sequence for Problem 1.

Second, we are devoted to get the boundedness of $\{P_{j}\}$. Since $T(T(B)^{-1/(n+2)}B)=1$ for $0<T(B)<\infty$, then
\[
\inf\{F_{p}(Q): o\in Q, \ T(Q)\geq 1\}\\
\leq M:=F_{p}(T(B)^{-1/(n+2)}B)< \infty.
\]
This implies  $F_{p}(P_{j})\leq M$ for all $j$. Since $c_{i}$ is positive and each $P_{j}$ contains the origin for all $i$ and $j$ respectively, we have
\[
\frac{1}{n+2}\min\{c_{i}:i=1,\ldots,m\}h(P_{j},\xi_{i})^{p}\leq \frac{1}{n+2}c_{i}h(P_{j},\xi_{i})^{p}\leq M.
\]
Consequently,
\[
h(P_{j},\xi_{i})\leq \left((n+2)\frac{M}{\min\{c_{i}:i=1,\ldots,m\}} \right)^{1/p}< \infty.
\]
Hence, the minimizing sequence $\{P_{j}\}_{j}$ is bounded from above. By the Blaschke selection theorem, it implies that $\{P_{j}\}_{j}$ has a convergent subsequence, still denoted by $\{P_{j}\}_{j}$, which converges to convex polytope $P_{0}$.

Third, we show dim $(P_{0})=n$. By lemma \ref{argu}, we know that
$Vol(P_{j})\geq \frac{1}{diam(P_{j})^{2}}T(P_{j})\geq \frac{1}{d^{2}}>0$ for some $d>0$. It follows that
\[
Vol(P_{0})=\lim_{j\rightarrow \infty}Vol(P_{j})>0.
\]
Hence, we get the desired result.

We are in place to return to Problem 2 for $(\mu,p)$. Since $P_{0}$ is a convex proper polytope dealing with Problem 1, making use of lemma \ref{re12}(1), we assert that $P:=F_{p}(P_{0})^{-1/p}P_{0}$ is a convex proper polytope dealing with Problem 2. By Lemmas \ref{LP33} and \ref{ra}, we say that such solution $P$ is unique and contains the origin in its interior.

Finally, we are ready to attack Problem (3). From Lemma \ref{LP23}(1), we know that the polytope $P$ is the unique solution to Problem 3. So the formula
\[
T(P)^{-1}d\mu^{tor}(P,\cdot)=h(P,\cdot)^{p-1}d\mu
\]
can be expressed as that in Theorem \ref{AD}.

Now, we prove Theorem \ref{TM} by adopting the approximation technique.

\emph{Proof of Theorem \ref{TM}}. By the proof of  Minkowski existence theorem showed in \cite{S14} or \cite{Hg05}, we know that, for the given measure $\mu$, there exists  a sequence of discrete measures $\{\mu_{j}\}$ defined on $\sn$
whose support is not contained in a closed hemisphere such that $\mu_{j}\rightarrow \mu$ weakly as $j\rightarrow \infty$. By Theorem \ref{AD}, there exists a polytope $P_{j}$ containing the origin with
\[
\mu_{j}=\frac{h(P_{j},\cdot)^{1-p}}{T(P_{j})}\mu^{tor}(P_{j},\cdot).
\]
In view of lemmas \ref{hj} and \ref{ui}, one see that the sequence $P_{j}$ is uniformly bounded, hence, by the Blaschke selection theorem, we assume the $P_{j}$ converges to a compact convex set $\Omega$ with $o\in \Omega$, using lemma \ref{hj} again, we know $T(P_{j})\geq c_{0}> 0$ for some $c_{0}>0$. By virtue of Lemma \ref{argu}, one see $Vol(P_{j})\geq \frac{1}{diam(P_{j})^{2}}T(P_{j})>0$, so $Vol(\Omega)=\lim_{j\rightarrow \infty}Vol(P_{j})>0$, thus $\Omega$ is a convex body containing the origin.

On the other hand, given a continuous function $f\in C(\sn)$, we get
\[
\int_{\sn}f(u)T(P_{j})h(P_{j},u)^{p-1}d\mu_{j}(u)=\int_{\sn}f(u)d\mu^{tor}(P_{j},u).
\]
Since $T(P_{j})h(P_{j},\cdot)^{p-1}\rightarrow T(\Omega)h(\Omega,\cdot)^{p-1}$ uniformly on $\sn$ for $p>1$, $\mu_{i}\rightarrow \mu$ and $\mu^{tor}(P_{j},\cdot)\rightarrow \mu^{tor}(\Omega,\cdot)$ weakly as $i\rightarrow \infty$. Hence, we derive that
\[
\int_{\sn}f(u)T(\Omega)h(\Omega,u)^{p-1}d\mu(u)=\int_{\sn}f(u)d\mu^{tor}(\Omega,u)
\]
for any $f\in C(\sn)$. This illustrates that Theorem \ref{TM} holds.

\section*{Acknowledgment}The authors would like to thank their supervisor  Prof. Yong Huang for valuable comments regarding the exposition of this paper.

\end{document}